\newdimen\normalparindent
\newtheorem{theorem}{Theorem}
\newtheorem{Corollary}{Corollary}
\newtheorem{Proposition}{Proposition}
\newtheorem{Statement}{Statement}
\newtheorem{Lemma}{Lemma}
\newtheorem{Example}{Example}
\newtheorem{Remark}{Remark}
\def\SR{\mathcal S}
\def\RS{\mathfrak R}
\def\TRS{\widetilde{\mathfrak R}}
\def\SR{\mathcal S}
\def\OO{\mathbf O}
\def\zz{\mathbf z}
\def\aa{\mathbf a}
\def\pp{\mathbf p}
\def\qq{\mathbf q}
\def\bb{\mathbf b}
\def\ww{\mathbf w}
\let\myh\widehat\let
\let\myo\overline
\def\tpi{\widetilde{\pi}}
\def\CC{\mathbb C}
\def\NN{\mathbb N}
\def\MM{\mathscr M}
\def\OO{\mathbf O}
\def\DD{\mathbf D}
\def\ddc{\operatorname{dd^c}}
\def\dsh{\operatorname{\delta-sh}}
\def\Id{\operatorname{Id}}
\def\mcap{\operatorname{cap}}
\def\sp{\operatorname{sp}}
\def\rhos{{\rho_{\sp}}}
\def\dist{\operatorname{dist}}
\def\mDelta{\operatornamewithlimits{\Delta}}
\def\loc{\operatorname{loc}}
\def\Pot{\operatorname{Pot}}
\def\const{\operatorname{const}}
\def\Meas{\operatorname{Meas}}
\begin{document}


\title{\bf{
Polynomial Hermite--Pad\'e $m$-system for meromorphic functions on a~compact Riemann surface}}

\author{A.~V.~Komlov}

\date{}

\maketitle

\markright{Polynomial Hermite--Pad\'e $m$-system}


\begin{abstract}
For an arbitrary tuple of   $m+1$ germs of analytic functions at a~fixed point, we introduce the so-called polynomial \textit{Hermite--Pad\'e $m$-system} (of order~$n$,
$n\in\mathbb N$), which consists of $m$~tuples of polynomials; these tuples, which are indexed by
a~natural number $k\in[1,\dots,m]$, are called the  \textit{$k$th polynomials of the Hermite--Pad\'e $m$-system}.
We study the weak asymptotics of the polynomials of the Hermite--Pad\'e $m$-system  constructed at the point~$\infty$  from the
tuple of germs $[1, f_{1,\infty},\dotsc$, $f_{m,\infty}]$ of the functions $1, f_1,\dots,f_m$ that
are meromorphic  on some $(m+1)$-sheeted branched covering $\pi\colon \RS\to\myh\CC$
of the Riemann sphere $\myh\CC$ of a~compact Riemann surface $\RS$. In particular, under some additional condition on~$\pi$,
we find the limit distribution of the zeros and the asymptotics of the ratios of the $k$th polynomials for all $k\in[1,\dots, m]$.
It turns out that in the case, where $f_j = f^j$ for some meromorphic function~$f$ on~$\RS$,
the ratios of some $k$th polynomials of such Hermite--Pad\'e $m$-system converge to the sum of the values of the
function~$f$ on the first $k$~sheets of the Nuttall partition of the Riemann surface $\RS$ into sheets.
\end{abstract}

\setcounter{tocdepth}{1}\tableofcontents

\makeatletter
\renewcommand{\@makefnmark}{}
\makeatother
\footnotetext{This work was supported by the Russian Science Foundation under grant 19-11-00316.}

\section{Introduction}
\label{s1}

Assume first that  $f_{0, \infty}(z)\equiv 1,  f_{1, \infty}(z), \dots, f_{m, \infty}(z)$ are  $m+1$
arbitrary analytic germs at infinity  (on the Riemann sphere~$\myh\CC$).
We fix a~natural number $k\in\{1,\dots,m\}$ and, for each $n\in\mathbb N$, define
the tuple of  ${m+1}\choose{k}$ `\textit{$k$th polynomials of the Hermite--Pad\'e $m$-system}' of order~$n$,
which are constructed from the tuple of germs  $[1, f_{1, \infty}, \dots,
f_{m, \infty}]$ at the point $\infty\in\myh\CC$ as follows.  These are the polynomials
$P_{n;i_1,\dots,i_k}$, $0\le i_1<i_2<\dots<i_k\le m$, such that
$\deg P_{n;i_1,\dots,i_k}\le (m+1-k)n$, at least one
$P_{n;i_1,\dots,i_k}\not\equiv 0$, and, for each index set $0<j_1<\dots<j_k\le m$,
\begin{equation}
\label{khp_i}
P_{n;j_1\dots,j_k}(z) +
\sum_{s=1}^k (-1)^{s}P_{n;0,j_1,\dots,j_{s-1}, j_{s+1},\dots,j_k}(z)f_{j_s,\infty}(z)=O\left(\frac{1}{z^{kn+1}}\right)
\end{equation}
as $z\to\infty$.
It is easily seen that condition \eqref{khp_i} is a~system of $(n(m+1)+1) {m\choose k}$ linear homogeneous equations
for $(n(m+1-k)+1){m+1\choose k}=n(m+1){m\choose k}+{m+1\choose k}$ unknown coefficients of the polynomials $P_{n;i_1,\dots,i_k}$.
The coefficients of this system are some linear expressions of the first $(m+1)n$ Taylor coefficients of the  germs $f_{s, \infty}$. Hence the
polynomials $P_{n;i_1,\dots,i_k}$ always exist, but, in general, are not unique.
(Note that the Hermite--Pad\'e $m$-system can be constructed also from the germs $f_{j, \infty}$ which
are meromorphic at~$\infty$. In this case, one should increase the maximum possible degree
of the polynomials $P_{n;i_1,\dots,i_k}$. For example, it suffices to require that $\deg P_{n;i_1,\dots,i_k}\le (m+1-k)n+M$, where $M$
is the maximal order of the poles of the germs $f_{j, \infty}$ at~$\infty$.)

It is clear that  conditions \eqref{khp_i} are linearly independent; however, the index~0 in them `plays a~special role'.
One can give a~different, the so-called homogeneous definition of the $k$th polynomials of the Hermite--Pad\'e $m$-system,
in which the conditions are no longer linearly independent: $\deg P_{n;i_1,\dots,i_k}\le (m+1-k)n$, at least one
$P_{n;i_1,\dots,i_k}\not\equiv 0$, and, for each index set $0\le i_0<i_1<\dots<i_k\le m$,
\begin{equation}
\label{khp}
\sum_{s=0}^k (-1)^{s}P_{n;i_0,\dots,i_{s-1}, i_{s+1},\dots,i_k}(z)f_{i_s,\infty}(z)=O\left(\frac{1}{z^{kn+1}}\right)
\quad \text{as } z\to\infty.
\end{equation}
It is clear that all conditions  \eqref{khp_i} are contained in \eqref{khp} (for $[i_0, i_1,\dots,i_k]:=[0, j_1,\dots,j_k]$).
In order to verify that conditions \eqref{khp_i} are sufficient for \eqref{khp} to hold with  $i_0\ne0$,
it suffices to substitute in \eqref{khp} the expressions for all $P_{n;i_0,\dots,i_{s-1}, i_{s+1},\dots,i_k}$,
which are obtained from \eqref{khp_i} for $[j_1,\dots,j_k]:=[i_0,\dots,i_{s-1}, i_{s+1},\dots,i_k]$, and
check that the right-hand side of \eqref{khp} is~$0$ up to  $O(z^{-(kn+1)})$.
So, definitions \eqref{khp_i} and \eqref{khp} are equivalent. In what follows, we will mostly use the first one.

Let us now recall the definitions of the classical Hermite--Pad\'e polynomials  of type~I and~II.
The Hermite--Pad\'e polynomials  of type~I of order~$n$ constructed from the tuple of germs $[1, f_{1, \infty}, \dots,
f_{m, \infty}]$ at the point $\infty\in\myh\CC$ are the polynomials $Q_{n, i}$, $0\le i \le m$,
such that $\deg Q_{n,i}\le n$, at least one  $Q_{n, i}\not\equiv 0$, and in the neighbourhood of infinity
\begin{equation}
\label{hp1}
\sum_{j=0}^m Q_{n,j}(z)f_{j,\infty}(z)=O\left(\frac{1}{z^{m(n+1)}}\right) \quad \text{as } z\to\infty.
\end{equation}
The Hermite--Pad\'e polynomials  of type~II of order~$n$ constructed from the tuple of germs $[1, f_{1, \infty}, \dots,
f_{m, \infty}]$ at the point $\infty\in\myh\CC$ are the polynomials $q_{n, i}$, $0\le i \le m$,
such that $\deg q_{n,i}\le mn$, at least one  $q_{n, i}\not\equiv 0$, and, for all $1\le j \le m$,
\begin{equation}
\label{hp2}
q_{n, 0}(z)f_{j,\infty}(z) - q_{n, j}(z) = O\left(\frac{1}{z^{n+1}}\right)\quad \text{as } z\to\infty
\end{equation}
in the neighbourhood of infinity.

Note that  conditions ~\eqref{khp_i}, which define the $1$st polynomials of the Hermite--Pad\'e $m$-system  (that is, for $k=1$),
coincide  (up to a~sign) with conditions \eqref{hp2}, which define the Hermite--Pad\'e polynomials  of type~II.
So, the  $1$st polynomials of the Hermite--Pad\'e $m$-system  are precisely the Hermite--Pad\'e polynomials of
type~II, that is,  $P_{n;i}\equiv q_{n,i}$. At the same time, condition~\eqref{khp_i},
which defines the $m$th polynomials of the Hermite--Pad\'e $m$-system  (that is for $k=m$)
is practically equal to condition \eqref{hp1}, which defines the Hermite--Pad\'e polynomials of type~I.
More precisely, if one defines  $P_{n;0,1,\dots,j-1,j+1,\dots,m}:=(-1)^jQ_{n;j}$, then the left-hand side of \eqref{khp_i} for $k=m$ and \eqref{hp1}
will be equal and the order of contact of~0 in their right-hand sides will be  $m(n+1)$ and $mn+1$, respectively.
So, the polynomials $(-1)^jQ_{n;j}$ automatically satisfy
condition~\eqref{khp_i}, that is,  the Hermite--Pad\'e polynomials of type~I
(with altered sign of the odd polynomials) are the particular case of the  $m$th polynomials of the
Hermite--Pad\'e $m$-system.
Moreover, in what follows, we will be mostly interested in the so-called weak asymptotics of the polynomials of the Hermite--Pad\'e $m$-system
(in the spirit of the classical Stahl's theorem for usual Pad\'e polynomials, see \cite{St, ApBuMaSu}),
hence the above discrepancy in the order of contact by a~fixed number $m-1$ is immaterial for us.

\begin{Remark}
Like the Hermite--Pad\'e  polynomials  of type~I and~II, a~polynomial Hermite--Pad\'e $m$-system
can be constructed from any tuple of $m+1$ germs at an arbitrary point $z_0$ on the  Riemann sphere~$\myh\CC$, not only for $z_0=\infty$.
Namely, the $k$th polynomials of the~Hermite--Pad\'e $m$-system  at a~point $z_0\in\CC$
are defined by the relation similar to \eqref{khp_i} (or~\eqref{khp}), where the left-hand side is the same, and the condition $O(z^{-(kn+1)})$
as $z\to\infty$ on the right is replaced the condition $O((z-z_0)^{n(m+1)+1})$ as $z\to z_0$ (which is independent of~$k$).
However, it will be more convenient to consider an Hermite--Pad\'e $m$-system  constructed from analytic germs at infinity.
Nevertheless, all the results discussed below are also true in the general case, with the corresponding change in wording.
\end{Remark}

We will study the weak asymptotics of the above  $k$th polynomials of the Hermite--\allowbreak Pad\'e $m$-system~\eqref{khp_i}
in the case where $f_{j, \infty}$ are germs of functions $f_j$ which are meromorphic  on some compact
$(m+1)$-sheeted Riemann surface $\RS$, and if $\RS$~satisfies some additional condition.
More precisely, let $\RS$ be a~compact Riemann surface, let $\pi\colon \RS\to\myh\CC$ be an  $(m+1)$-sheeted holomorphic branched covering
of the Riemann sphere $\myh\CC$, $m\ge 1$, and let $\Sigma$ be the set of critical values of the projection~$\pi$.
The points of~$\RS$ will be written in boldface,  and their projections will be denoted in lightface
(for example, $\zz\in\RS$, and  $\pi(\zz) = z$).
We denote by $\MM(\RS)$ the space of meromorphic functions on~$\RS$.
Let $f_1, f_2 \dots, f_{m} \in \MM(\RS)$ be such that the functions  $1, f_1, f_2, \dots, f_{m}$
are independent over the field of rational functions~$\mathbb C(z)$.
Let $\circ$ be an arbitrary point on~$\RS$ which is not a~critical point for the projection~$\pi$.
We will assume without loss of generality that $\circ\in\pi^{-1}(\infty)$ and write $\pmb\infty^{(0)}:=\nobreak \circ$.
It should be noted that the case $\infty\in\Sigma$ is not excluded, that is,
it is possible that $\infty$~is a~critical value of~$\pi$, but at the point $\pmb\infty^{(0)}\in\pi^{-1}(\infty)$ the mapping~$\pi$ is nondegenerate.
Let $f_{1, \infty}(z), \dots, f_{m, \infty}(z)$ be the meromorphic germs of the functions $f_1(\zz), \dots, f_{m}(\zz)$ at the point $\pmb\infty^{(0)}$, respectively.
More precisely, $f_{j, \infty}(z):=f_j(\pi^{-1}_0(z))$, where $\pi^{-1}_0$ is the inverse of~$\pi$
in the neighbourhood of the point $\pmb\infty^{(0)}$.
For simplicity of presentation, we assume that the germs $f_{j, \infty}(z)$ are holomorphic at~$\infty$, that is,  they have no poles at this point.
\textit{In what follows, we will consider only polynomials of the Hermite--Pad\'e $m$-system  \eqref{khp_i} constructed from the tuple of such germs
 $[1, f_{1, \infty}, \dots, f_{m, \infty}]$ at the point~$\infty$. Hence, unless otherwise stated, $P_{n,j_1,\dots,j_k}$ will
 denote the corresponding $k$th polynomial of the Hermite--Pad\'e $m$-system constructed from this tuple of germs.}
 It should be emphasized that the number~$m$
  in the definition of an Hermite--Pad\'e $m$-system and in the definition of the covering~$\pi$ is the same.

In the present paper, we will find the limit distribution of the zeros and the
asymptotics of the ratios of the $k$th polynomials
of the Hermite--Pad\'e $m$-system constructed from the above tuple of germs of functions (which are meromorphic on the above Riemann surface $\RS$)
under the following additional condition on~$\RS$.
It can be assumed that the Riemann surface $\RS$ is the standard compactification of the Riemann surface of
an $(m+1)$-valued global analytic function $\pi^{-1}(\cdot)$ defined on the domain $\myh\CC\setminus\Sigma$.
We define the surface $\TRS_{[k]}$ as the standard compactification of the  Riemann surface of all possible
unordered collections of~$k$ distinct germs of the function $\pi^{-1}(\cdot)$ considered at the same
points $z\in\myh\CC\setminus\Sigma$ (for more details, see \S\,\ref{s4}).
We will require that the surface $\TRS_{[k]}$ is connected (this assumption is discussed in \S\,\ref{s6}).

Note that the distribution of the zeros and the asymptotic behaviour of the ratios of the Hermite--Pad\'e polynomials of
type~I that are constructed from the tuple of germs  under consideration (that is, in fact, for
the $m$th polynomials of a~Hermite--Pad\'e $m$-system), have been rigorously justified in~\cite{ChKoPaSu},
and for Hermite--\allowbreak Pad\'e polynomials  of type~II  (that is, for the 1st polynomials of the system),
in~\cite{Nut84} (under some `general position condition').
The  surfaces $\TRS_{[1]}$ and $\TRS_{[m]}$ are isomorphic to~$\RS$ (see \S\,\ref{s6}), and hence, they are always
connected. Thus, in particular, in the present paper, we reprove the result from~\cite{ChKoPaSu} and establish
the result from~\cite{Nut84} in the most general case.
In our study, as in~\cite{ChKoPaSu}, we will use the basic ideas of Nuttall's approach (see \cite{Nut81},~\cite{Nut84}).
Note that our proofs are close in spirit to those of the corresponding results in~\cite{ChKoPaSu},
and the main tools of~\cite{ChKoPaSu} are the methods of potential theory on compact Riemann surfaces.

We will also apply our results obtained here to the problem of reconstruction of values of an algebraic function from its
given germ. In particular, we will show that  if $f_j=f^j$ for some $f\in\MM(\RS)$  and if the surface $\TRS_{[k]}$ is connected,
then the ratio  $P_{n;0,\dots,k-2,k}/P_{n;0,\dots,k-1}$
of the $k$th polynomials of the Hermite--Pad\'e $m$-system
asymptotically (as $n\to\infty$) reconstructs the sum of the values of~$f$ on the first~$k$ sheets
of the Nuttall partition of the Riemann surface $\RS$.

The results of the present paper were partially announced in~\cite{Ko20}.
Unfortunately, in~\cite{Ko20}
 the condition that the surface $\TRS_{[k]}$ should be connected was omitted in the formulation of the corresponding results.
 To date, there is no reason to claim that the results of \S\S\,\ref{s3} and~\ref{s5}
 of the present paper   (in particular, the main Theorems \ref{theorem1} and \ref{theorem2})  are correct without this assumption.

The paper is organized as follows. The main results are formulated in~\S\,\ref{s2}.
In \S\,\ref{s3}, under the condition that all the surfaces $\TRS_{[k]}$, $k=1,\dots, m$, are connected,
we show how using the polynomial Hermite--Pad\'e $m$-system one can
asymptotically reconstruct the values of an arbitrary function $f\in\MM(\RS)$ from its germ on all sheets of the Nuttall partition of~$\RS$
except the last one.
In \S\,\ref{s4}, we give an accurate definition of the surface $\TRS_{[k]}$.
Next,  we give an equivalent definition
of the $k$th polynomials of a~Hermite--Pad\'e $m$-system in terms of the  surface $\TRS_{[k]}$ and
special meromorphic functions on this surface, which are constructed from the original function $f_1,\dots,f_m$.
The main Theorems~\ref{theorem1} and \ref{theorem2} will be proved in~\S\,\ref{s5}. In \S\,\ref{s5.1},
we introduce the necessary definitions, prove the auxiliary results, and fix normalizations.
Theorem~\ref{theorem1} is proved in \S\,\ref{s5.2}, and Theorem~\ref{theorem2}, in~\S\,\ref{s5.3}.
In \S\,\ref{s6}, we discuss the connectedness condition of the  surface $\TRS_{[k]}$
from Theorems~\ref{theorem1} and \ref{theorem2}. In particular, in~\S\,\ref{s6} we give a~sufficient condition for connectedness
of all surfaces $\TRS_{[k]}$ for $k=1,\dots,m$.

\section{Statements of the main results}
\label{s2}

Following Nuttall~\cite{Nut81, Nut84}, we introduce the partition of $\RS$ into sheets (for more details, see \cite{ChKoPaSu}).
Let $u(\zz)$ be a~harmonic function in~$\RS\setminus\pi^{-1}(\infty)$ with the following logarithmic singularities at points
of the set $\pi^{-1}(\infty)$:
\begin{equation}
\label{u_to}
\begin{aligned}
u(\zz)&=-m\log{|z|}+O(1),\quad\zz\to\pmb\infty^{(0)},\\
u(\zz)&=\log{|z|}+O(1),
\quad\zz\to\pi^{-1}(\infty)\setminus\pmb\infty^{(0)}.
\end{aligned}
\end{equation}
The function $u$ always exists  and is defined up to an additive constant (it can be
constructed explicitly using the standard bipolar Green functions; for more details, see formula (23) in~\cite{ChKoPaSu}.

\begin{Remark}
\label{u_branch}
We emphasize that the definition of the  function~$u$ is also correct in the case $\infty\in\Sigma$. (If $\pmb\infty\in\pi^{-1}(\infty)$
is a~critical point of~$\pi$ of order~$N$, then in its neighbourhood~$\OO$ in the local coordinate $\zeta:\OO\to\{\eta:|\eta|<\delta\}$, $\zeta(\pmb\infty)=0$
the second condition from \eqref{u_to} can be written as $u(\zeta^{-1}(\eta))=-N\log{|\eta|}+O(1)$ as $\eta\to 0$.)
\end{Remark}

Let $z\in\CC$ and let $u_0(z),\dots,u_m(z)$  be the values of the function~$u$ at points of the set $\pi^{-1}(z)$
in nondecreasing order  (in the case $z\in\Sigma$, they
are listed the same number of times as the order of the corresponding point of the set $\pi^{-1}(z)$ as a~critical point of~$\pi$):
\begin{equation}
\label{inequ_u}
u_0(z)\le u_1(z)\le\dots\le u_{m-1}(z)\le u_m(z).
\end{equation}
If $u_{j-1}(z)< u_j(z)<u_{j+1}(z)$,
we include in the set $\RS^{(j)}$ (the $j$th sheet of the  surface $\RS$, $j=0,\dots,m$) the
point $\zz^{(j)}\in\pi^{-1}(z)$ such that $u(\zz^{(j)})=u_j(z)$ (for  $j=0$ we consider only the inequality
 $u_0(z)<u_{1}(z)$, and for $j=m$, only the inequality  $u_{m-1}(z)<u_{m}(z)$). Otherwise, points of the set $\pi^{-1}(z)$
  are not included in~$\RS^{(j)}$. For $z=\infty$, we need to replace $u(z)$ in \eqref{inequ_u}  by $u(z)-\log |z|$.
So, the sheets $\RS^{(j)}$ are defined as
\begin{equation}
\label{R^j}
\begin{aligned}
\RS^{(0)}&:=\{\zz\in\RS:0<u_{1}(z)-u(\zz)\};\\
\RS^{(j)}&:=\{\zz\in\RS:u_{j-1}(z)-u(\zz)<0<u_{j+1}(z)-u(\zz)\},\quad j=1,\dots,m-1;\\
\RS^{(m)}&:=\{\zz\in\RS:u_{m-1}(z)-u(\zz)<0\}.
\end{aligned}
\end{equation}
From the definition it follows that  $\RS^{(j)}$ are pairwise disjoint open subsets of $\RS$ (disconnected, in general) and that the projection
$\pi:\RS^{(j)}\to\pi(\RS^{(j)})$ is biholomorphic.
In what follows, the point of the set $\RS^{(j)}$ lying over the point
$z\in \myh\CC$ will be denoted by  $\zz^{(j)}$. The boundary of the sheet $\RS^{(j)}$ will be denoted by $\partial(\RS^{(j)})$.
Since  $u_1(z)-u_0(z)\to+\infty$ as $z\to\infty$, the originally selected point $\pmb\infty^{(0)}$ (at which the
germs of the functions~$f_j$ are considered) always lies in the list $\RS^{(0)}$, this
agrees with our notation for points of sheets. It is clear that  no critical point of the projection~$\pi$ lies in any of the sets $\RS^{(j)}$.

We set
\begin{equation}
\label{F_j}
\begin{aligned}
F_j&:=\{z\in\myh\CC: u_{j-1}(z)=u_j(z)\},\quad j=1,\dots,m,\\
F&:=\cup_{j=1}^mF_j
\end{aligned}
\end{equation}
In Attachment~1 of \cite{ChKoPaSu} it was shown that the sets  $F_j$ and $\partial(\RS^{(j)})$
are  (real) one-\allowbreak dimen\-sio\-nal piecewise analytic sets without isolated points.
The precise definition of a~piecewise analytic set is also given in~\cite{ChKoPaSu}.
Informally speaking, this means that any such set is the closure of the union of a~finite  number of
analytic arcs featuring some regularity at endpoints.
In particular, this implies that the sets  $F_j$ have empty interior, which immediately implies
the equalities $\pi(\partial\RS^{(j)})=F_j\cup F_{j+1}$ for $j=1,\dots,m-1$, and also $\pi(\partial\RS^{(0)})=F_1$ and $\pi(\partial\RS^{(m)})=F_m$.

In the same way as in \cite{ChKoPaSu}, we define on the set $\myh\CC\setminus F$ the matrix~$A$ by
\begin{equation}
\label{A}
A(z):=
\begin{pmatrix}
1 & f_1(\zz^{(0)}) & \ldots & f_m(\zz^{(0)})\\
1 & f_1(\zz^{(1)}) & \ldots & f_m(\zz^{(1)})\\
\hdotsfor{4}\\
1 & f_1(\zz^{(m)}) & \ldots & f_m(\zz^{(m)})
\end{pmatrix}
\end{equation}
(the rows and columns of the matrix~$A$ will be numbered from~$0$ to~$m$).
Clearly, $\det A\in \MM(\myh\CC\setminus F)$ (it  is a~meromorphic function on $\myh\CC\setminus F$).
It is easily seen that $(\det A)^2$ extends to a~meromorphic  function on the whole~$\myh\CC$
(a~crossing of an arc from~$F$ results only in the interchange of some rows  in the matrix~$A$,
that is,  $\det A$ may only change the sign).
Besides that, $\det A \not\equiv 0$, since the functions $1, f_1, f_2, \dots, f_{m}$ are independent over~$\mathbb C(z)$.

Given any $0\le j_1<\dots<j_k\le m$, by $M_{j_1, \dots,j_k}(z)$ we denote
\textit{the minor of the matrix~$A$ corresponding to the columns with numbers $j_1,\dots,j_k$ and
the rows with numbers $0, 1, \dots,k-1$.}
From the definition it follows that $M_{j_1, \dots,j_k}\in \MM(\myh\CC\setminus F)$.
Moreover, if $\TRS_{[k]}$ is connected, then $M_{j_1, \dots,j_k}$ does not vanish identically in any
neighbourhood in $\myh\CC\setminus F$ (see Proposition~\ref{M_ne_0}). Hence,
for any $0\le j_1<\dots<j_k\le m$ and $0\le i_1<\dots<i_k\le m$,
the ratio $M_{j_1, \dots,j_k}(z)/M_{i_1,\dots,i_k}(z)$ is a~meromorphic   function on $\myh\CC\setminus F_k$. Indeed,
a~crossing of arcs from~$F$ results in an~interchange of some rows (with numbers corresponding to the
sheets, whose common boundary is projected to this arc)
in the matrix~$A$.
Moreover, rows with numbers from~0 to~$k-1$ cannot be interchanged with rows with numbers from~$k$ to~$m$
when crossing through $F\setminus F_k$
(since  $u_k\ne u_{k-1}$ on $F\setminus F_k$). Hence, over $F\setminus F_k$,
the sheets with numbers from~0 to~$k-1$ have no common boundary with the sheets with numbers from~$k$ to~$m$.)
So, as $F\setminus F_k$ is crossed, either all the minors $M_{j_1, \dots,j_k}(z)$  remain unchanged or
they all simultaneously change their sign. Hence, on $F\setminus F_k$
the functions $M_{j_1, \dots,j_k}(z)/M_{i_1, \dots,i_k}(z)$ are glued as meromorphic functions.

Let us introduce some notations that will be adhered to in what follows.
By $\xrightarrow{*}$ we will denote the  weak* convergence  (indicating if required the space on which it is
considered). By $\xrightarrow{\mcap}$  we will denote the convergence in the (logarithmic) capacity (indicating if required the set
on which it is considered). Let $d\sigma:= \frac{i}{2\pi}\frac{dz\wedge d\myo z}{(1+|z|^2)^2}$
be the normalized area form of the spherical metric on~$\myh\CC$.
In order to speak about the asymptotic behaviour of the $k$th polynomials of a~Hermite--Pad\'e $m$-system,
we need to fix their normalization. Therefore, along with the polynomials $P_{n;i_1,\dots,i_k}$ (see~\eqref{khp_i}), we will
use the polynomials $P_{n;i_1,\dots,i_k}^*:=c_{n;i_1,\dots,i_k}P_{n;i_1,\dots,i_k}$ ($c_{n;i_1,\dots,i_k}>0$ are constants) for which the functions
$\log|P_{n;i_1,\dots,i_k}^*|$ are spherically normalized:
\begin{equation}
\label{Q_n^*}
\int _{\myh\CC}\log|P_{n;i_1,\dots,i_k}^*|d\sigma=0.
\end{equation}
Note that relation \eqref{khp_i} is not satisfied in general for  $P_{n;i_1,\dots,i_k}^*$.
We will also denote by~$\ddc$ the standard analogue of the Laplace operator on Riemann surfaces;
in general case, this operator transforms currents of degree~0 to currents of degree~2, and on
smooth functions~$\varphi$ it acts in the local coordinate $\zeta = x +iy$ as $\ddc\varphi = (\varphi_{xx}+\varphi_{yy}) dxdy=\Delta\varphi dxdy$.
(For the required properties of the operator~$\ddc$ and results of the theory of potential on compact Riemann surfaces,
see  \cite{ChKoPaSu}, Attachment~2, and~\cite{Ch1}.
We will refer to these facts retaining, if possible, the corresponding notation.)

So, recall  that $P_{n;j_1,\dots,j_k}$ are the  $k$th polynomials of the Hermite--Pad\'e $m$-system
constructed from the tuple of germs $[1, f_{1,\infty},\dots,f_{m,\infty}]$ at the point $\infty$ of the functions $f_j$, $j = 1,\dots, m$, which are
meromorphic  on the Riemann surface~$\RS$. The following results hold.

\begin{theorem}
\label{theorem1}
Suppose that the surface $\TRS_{[k]}$, which is constructed from~$\pi$, is connected. Then

$1)$ There exists a~number $L\in\NN$ such that, for any neighbourhood~$V$ of the compact set  $F_k$, for all sufficiently large~$n$,  $n>N=N(V)$,
there are at most~$L$ zeros of the polynomials $P_{n;i_1,\dots,i_k}$ outside the neighbourhood~$V$.

$2)$ For an arbitrary $p\in[1,\infty)$, as $n\to\infty$,
\begin{equation}
\frac{1}{n}\log|P_{n;i_1,\dots,i_k}^*(z)|\to -\sum_{s=0}^{k-1}u_s(z)
\quad \text{in}\quad L^p(\myh\CC, d\sigma),
\end{equation}
where the function $\sum_{s=0}^{k-1}u_s(z)$ is spherically normalized: $\int _{\myh\CC}\sum_{s=0}^{k-1}u_s(z)d\sigma=0$.

$3)$ As $n\to\infty$, 
\begin{equation}
\frac{1}{n}\ddc \log|P_{n;i_1,\dots,i_k}(z)|\xrightarrow{*} -\ddc \left(\sum_{s=0}^{k-1}u_s(z)\right) \quad\text{in}\quad C(\myh\CC)^*.
\end{equation}
\end{theorem}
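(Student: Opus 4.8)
The plan is to realize the $k$th polynomials via a single meromorphic object on the surface $\TRS_{[k]}$, to study it by potential theory in the spirit of~\cite{ChKoPaSu}, and then to recover the individual polynomials by Cramer's rule; all three assertions will come out of one convergence statement on $\TRS_{[k]}$. Using the equivalent description of the $k$th polynomials given in~\S\,\ref{s4}, I would attach to the tuple $(P_{n;j_1,\dots,j_k})_{0\le j_1<\dots<j_k\le m}$ a meromorphic function $\Psi_n$ on $\TRS_{[k]}$, built from these polynomials and from the minors $M_{j_1,\dots,j_k}$ of the matrix~$A$ taken over the $k$ sheets making up a point of $\TRS_{[k]}$. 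Passing to $\TRS_{[k]}$ is exactly what makes $\Psi_n$ single-valued: a crossing of an arc of $F\setminus F_k$ permutes the rows of~$A$ only within $\{0,\dots,k-1\}$ or only within $\{k,\dots,m\}$, so all the relevant minors change sign simultaneously. Two features of $\Psi_n$ will be used throughout. First, the defining relations~\eqref{khp_i} force $\Psi_n$ to vanish to order $\ge kn+1-O(1)$ at certain distinguished points of $\TRS_{[k]}$ over~$\infty$, while its remaining singularities over~$\infty$ are poles of order $\le(m+1-k)n+O(1)$ (coming from the bound $\deg P_{n;i_1,\dots,i_k}\le(m+1-k)n$), and away from $\pi_{[k]}^{-1}(\infty)$ the function $\Psi_n$ has at most $O(1)$ poles, situated at the fixed preimages of the poles of $f_1,\dots,f_m$. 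Second, $\Psi_n\not\equiv0$; this, as well as the very construction of $\Psi_n$, rests on the non-vanishing of the minors $M_{j_1,\dots,j_k}$ on open subsets of $\myh\CC\setminus F$ (Proposition~\ref{M_ne_0}), which is precisely where connectedness of $\TRS_{[k]}$ enters — indeed, by invertibility of the $k$th compound matrix of~$A$, $\Psi_n\equiv0$ would force every $P_{n;j_1,\dots,j_k}\equiv0$.

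Let $\myt u_{[k]}$ be the Nuttall function on $\TRS_{[k]}$ built from~$u$, i.e.\ the function harmonic off $\pi_{[k]}^{-1}(\infty)$, with logarithmic singularities over~$\infty$ mirroring those of~$u$, and with $\sum_{s=0}^{k-1}u_s(z)$ as its projection onto the \emph{lowest sheet} of $\TRS_{[k]}$ (the sheet whose point over~$z$ consists of the $k$ points of $\pi^{-1}(z)$ with the $k$ smallest values of~$u$). The heart of the matter is to prove, along any subsequence for which $\tfrac1n\log|\Psi_n|$ converges in $L^1_\loc(\TRS_{[k]})$ — such subsequences exist because the $1/n$-scaled divisors of $\Psi_n$ have uniformly bounded total variation — the upper bound $\tfrac1n\log|\Psi_n|\le-\myt u_{[k]}+\const+o(1)$ everywhere on $\TRS_{[k]}$, together with the sharpness $\tfrac1n\log|\Psi_n|\to-\myt u_{[k]}+\const$ on the lowest sheet, the latter convergence being moreover in capacity off~$F_k$. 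The upper bound is the real work: I would derive it by a maximum-principle/two-constants argument exploiting the high-order vanishing of $\Psi_n$ over~$\infty$ from~\eqref{khp_i}, the degree bound, and the $S$-property of the compact sets~$F_k$ — which holds automatically, since the $F_k$ arise from~$u$, a minimum of harmonic branches, and hence glue symmetrically across $F_k$. The sharpness on the lowest sheet then follows from the spherical normalization~\eqref{Q_n^*} and the upper bound, via the rigidity of the associated potential-theoretic extremal problem; in particular the limit does not depend on the subsequence.

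It remains to descend from $\Psi_n$ to the polynomials. For $z\notin F$ the $\binom{m+1}{k}$ numbers $\{\Psi_n(\mathbf w):\pi_{[k]}(\mathbf w)=z\}$ are obtained from the $\binom{m+1}{k}$ numbers $\{P_{n;j_1,\dots,j_k}(z)\}$ by a fixed linear map whose matrix is, up to signs, the $k$th compound matrix of $A(z)$; it is invertible wherever $\det A(z)\ne0$, i.e.\ off a finite set. Solving for $P_{n;i_1,\dots,i_k}(z)$ and inserting the bounds above, the term coming from the lowest sheet dominates off a set of capacity zero, which gives
\[
\frac1n\log|P_{n;i_1,\dots,i_k}(z)|\ \longrightarrow\ -\sum_{s=0}^{k-1}u_s(z)+\const .
\]
Standard estimates for log-moduli of polynomials of degree $O(n)$ (uniform upper bounds in the spherical norm and uniform integrability of the negative parts, cf.~\cite{ChKoPaSu}) upgrade this to convergence in $L^p(\myh\CC,d\sigma)$ for every $p\in[1,\infty)$, and matching the normalization~\eqref{Q_n^*} forces $\const=0$; this is assertion~$2)$. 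Assertion~$3)$ follows by applying $\ddc$, which is continuous from $L^1_\loc$ into currents of degree~$2$ and satisfies $\ddc\log|P_{n;i_1,\dots,i_k}|=\ddc\log|P^*_{n;i_1,\dots,i_k}|$. Assertion~$1)$ is obtained in the usual Stahl manner: off~$F_k$ the limit potential $-\sum_{s=0}^{k-1}u_s$ is harmonic and one has matching two-sided estimates for $\tfrac1n\log|P_{n;i_1,\dots,i_k}|$ there, which confine all but a uniformly bounded number $L$ of zeros to any given neighbourhood~$V$ of~$F_k$.

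I expect the main obstacle to be the potential-theoretic step of the second paragraph — the upper bound $\tfrac1n\log|\Psi_n|\le-\myt u_{[k]}+\const+o(1)$ and the consequent identification of the limit — together with the non-triviality $\Psi_n\not\equiv0$. Both are exactly where the connectedness of $\TRS_{[k]}$ is indispensable: without it, $\Psi_n$ could factor through a proper subsurface, the relevant minors could vanish on open sets, and the scheme would collapse — which is precisely the gap left in the announcement~\cite{Ko20}.
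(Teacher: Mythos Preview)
Your plan is in the right spirit --- pass to $\TRS_{[k]}$, attach to the tuple of $k$th polynomials a single meromorphic function (the paper calls it $\myt R_n$; your $\Psi_n$), and recover the polynomials by Cramer's rule --- but you miss the central simplification, and as a result you set yourself a harder problem than the one the paper actually solves.

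The step you flag as ``the main obstacle'', namely the upper bound $\tfrac1n\log|\Psi_n|\le -\myt u_{[k]}+\const+o(1)$ via a two-constants/$S$-property argument, is not needed at all. Because $\TRS_{[k]}$ is \emph{compact and connected} and $\myt R_n\not\equiv 0$ is meromorphic on it, the divisor $(\myt R_n)$ is completely pinned down (see~\eqref{div_tRn}): it is $n$ times the fixed divisor supported at $\myt\pi^{-1}(\infty)$ whose potential is $-\myt u$, plus a residual divisor with a \emph{fixed} number $S$ of free zeros $\myt\aa_j(n)$ and $S$ fixed poles $\myt\bb_j$. Hence one has the \emph{identity}
\[
\log|\myt R_n(\myt\zz)| \;=\; -n\,\myt u(\myt\zz)\;+\;\sum_{j=1}^{S} g(\myt\aa_j(n),\myt\bb_j;\myt\zz)\;+\;c_n,
\]
an equality, not an inequality --- no $S$-property, no two-constants, no subsequence extraction. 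This exact representation (equations~\eqref{log_tRn}--\eqref{|tRn|_2}) is what makes the rest short: the ``error'' is a sum of $S$ bipolar Green functions, uniformly bounded in every $L^p$ and uniformly bounded pointwise away from the $2S$ projected points $a_j(n),b_j$.

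This also affects your sketch of Part~1). ``Matching two-sided estimates'' for $\tfrac1n\log|P_{n;i_1,\dots,i_k}|$ coming from $L^p$ convergence do \emph{not} by themselves bound the number of spurious zeros uniformly in~$n$; one would still need an extra argument. The paper instead uses the exact formula to write, for $z\in\myh\CC\setminus F_k$,
\[
P_{n;j_1,\dots,j_k}(z)\;=\;M_{j_1,\dots,j_k}(\myt\zz^{(01\dots k-1)})\,\myt R_n(\myt\zz^{(01\dots k-1)})\,\bigl(1+h_{n;j_1,\dots,j_k}(z)\bigr),
\]
chooses contours $\Gamma_n\subset V$ that stay a fixed distance from $F_k$, from $\partial V$, and from the $2S$ points $a_j(n),b_j$, so that $\max_{\Gamma_n}|h_n|\to 0$, and then applies the argument principle on $\Gamma_n$. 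This yields the explicit bound $L_{j_1,\dots,j_k}=S+\alpha_{j_1,\dots,j_k}$ (Statement~\ref{St_zero}), which your route would not deliver without further work.

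For Parts~2) and~3) your descent via the compound matrix of $A$ and the normalization argument are correct in outline; the paper carries this out with the exact formula above, the uniform $L^p$ bound on the $S$ Green functions, and a standard weak-$*$ compactness/subsequence argument to identify the limit. So: right framework, but replace the Stahl-type upper-bound step by the divisor identity on the compact surface --- that is where connectedness of $\TRS_{[k]}$ really pays off.
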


\begin{theorem}
\label{theorem2}
Suppose that the surface $\TRS_{[k]}$, which is constructed from~$\pi$, is connected.
Then, for any compact set $K\subset \CC\setminus F_k$, as $n\to\infty$,
\begin{equation}
\label{int_t2_1}
\frac{P_{n;j_1,\dots,j_k}(z)}{P_{n;i_1,\dots,i_k}(z)}
\xrightarrow{\mcap}
\frac{M_{j_1, \dots,j_k}(z)}{M_{i_1, \dots,i_k}(z)},
\quad z\in K.
\end{equation}
Moreover, for an arbitrary $\varepsilon>0$,
\begin{equation}
\mcap\left\{z\in K:\left|\frac{P_{n;j_1,\dots,j_k}(z)}{P_{n;i_1,\dots,i_k}(z)}-
\frac{M_{j_1, \dots,j_k}(z)}{M_{i_1, \dots,i_k}(z)}\right|^{1/n}\cdot
e^{u_{k}(z)-u_{k-1}(z)}\ge 1+\varepsilon\right\}
\to 0.
\end{equation}
\end{theorem}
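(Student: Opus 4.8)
\emph{Strategy and reductions.} The plan is to deduce the first assertion from the second, and then to prove the geometric‑rate estimate by the Nuttall‑type method of \cite{ChKoPaSu} used for Theorem~\ref{theorem1}: lift the relations \eqref{khp_i} to the surface $\TRS_{[k]}$ and analyse them there by potential theory. For the reduction: on a compact $K\subset\CC\setminus F_k$ the function $u_k-u_{k-1}$ is continuous and strictly positive, since by \eqref{F_j} its zero set is exactly $F_k$; set $\delta_0:=\min_K(u_k-u_{k-1})>0$. Choosing $\varepsilon>0$ with $(1+\varepsilon)e^{-\delta_0}<1$, the second assertion of Theorem~\ref{theorem2} yields $\mcap\{z\in K:|P_{n;j_1,\dots,j_k}/P_{n;i_1,\dots,i_k}-M_{j_1,\dots,j_k}/M_{i_1,\dots,i_k}|\ge((1+\varepsilon)e^{-\delta_0})^n\}\to0$, which is precisely the convergence in capacity \eqref{int_t2_1}. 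Moreover, as $\TRS_{[k]}$ is connected, Proposition~\ref{M_ne_0} gives $M_{i_1,\dots,i_k}\not\equiv0$, so $M_{j_1,\dots,j_k}/M_{i_1,\dots,i_k}$ is meromorphic on $\myh\CC\setminus F_k$ and meets $K$ in at most finitely many zeros and poles — a set of capacity zero which we discard. Hence it suffices to prove the second assertion; put $I=\{i_1,\dots,i_k\}$, $J=\{j_1,\dots,j_k\}$, and let $S_0:=\{0,1,\dots,k-1\}$, $S_1:=\{0,1,\dots,k-2,k\}$ be the two lowest $k$‑element collections of sheets of $\RS$.

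\emph{The remainder on $\TRS_{[k]}$.} Following \S\ref{s4}--\S\ref{s5}, relation \eqref{khp_i} (equivalently \eqref{khp}) says that near $\pmb\infty^{(0)}$ a suitable linear combination of the polynomials $P_{n;\cdot}$ and the germs $f_{j,\infty}$ vanishes to order $\ge kn-O(1)$; continuing this combination over $\TRS_{[k]}$ produces a nonzero meromorphic remainder $R_n$ on the connected surface $\TRS_{[k]}$ whose value over a generic $z$ at the point $\myt{\zz}_{S}\in\TRS_{[k]}$ corresponding to a $k$‑element collection $S$ of sheets (over $\myh\CC\setminus F$ this is $\{\zz^{(s)}:s\in S\}$) equals the signed pairing $R_n(\myt{\zz}_{S})=\sum_{T}\varepsilon(S,T)\,M^{S}_{T}(z)\,P_{n;T}(z)$, where $M^{S}_{T}(z)$ is the order‑$k$ minor of $A(z)$ on the rows indexed by $S$ and the columns indexed by $T$ (so $M^{S_0}_{T}$ is the minor $M_{j_1,\dots,j_k}$ of \eqref{A} when $T=\{j_1,\dots,j_k\}$). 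By \eqref{khp_i} the divisor of $R_n$ is controlled: $R_n$ vanishes to order $\ge kn-O(1)$ at a distinguished point of $\myt\pi^{-1}(\infty)$, has poles of order $O(n)$ at the other points of $\myt\pi^{-1}(\infty)$, and has divisor bounded uniformly in $n$ elsewhere; and $R_n\not\equiv0$, because the matrix $(M^{S}_{T})_{S,T}$ — the $k$th compound matrix of $A(z)$ — is invertible (since $\det A\not\equiv0$) while not all $P_{n;T}$ vanish. Inverting this linear relation expresses $P_{n;T}(z)=\sum_{S}c_{T,S}(z)\,R_n(\myt{\zz}_{S})$ with coefficients $c_{T,S}$ that are fixed meromorphic functions on $\myh\CC\setminus F_k$ (hence $\tfrac1n\log|c_{T,S}|\to0$ and $\tfrac1n\log|M_{j_1,\dots,j_k}/M_{i_1,\dots,i_k}|\to0$ in capacity on $K$), arranged — by the sign conventions of \eqref{khp_i} — so that $c_{J,S_0}/c_{I,S_0}=M_{j_1,\dots,j_k}/M_{i_1,\dots,i_k}$.

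\emph{The dominant term and the rate.} The key input, established alongside Theorem~\ref{theorem1}: the balayage (equilibrium) problem on $\TRS_{[k]}$ attached to the divisor data above has a solution $\psi$ with $\tfrac1n\log|R_n|\xrightarrow{\mcap}\psi$ on $\TRS_{[k]}$, and the restriction of $\psi$ to the sheet‑collections is computed explicitly; in particular, on $K$ the value $\psi(\myt{\zz}_{S_0})$ is strictly the largest among all collections, with $\psi(\myt{\zz}_{S_0})-\psi(\myt{\zz}_{S})\ge u_k(z)-u_{k-1}(z)\ge\delta_0$ for every $S\ne S_0$ and equality for $S=S_1$ — the gap $u_k-u_{k-1}$ being exactly the one that closes along $F_k$, where $\myt{\zz}_{S_0}$ and $\myt{\zz}_{S_1}$ merge into a branch point of $\myt\pi$. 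Consequently $R_n(\myt{\zz}_{S_0})$ dominates on $K$, with $\limsup_n|R_n(\myt{\zz}_{S})/R_n(\myt{\zz}_{S_0})|^{1/n}\le e^{-(u_k-u_{k-1})}$ in capacity on $K$ for every $S\ne S_0$, the leading competitor being $R_n(\myt{\zz}_{S_1})$. Dividing numerator and denominator of $P_{n;J}/P_{n;I}=\bigl(\sum_S c_{J,S}R_n(\myt{\zz}_{S})\bigr)\big/\bigl(\sum_S c_{I,S}R_n(\myt{\zz}_{S})\bigr)$ by $R_n(\myt{\zz}_{S_0})$ and using $c_{J,S_0}/c_{I,S_0}=M_{j_1,\dots,j_k}/M_{i_1,\dots,i_k}$, the $S_0$‑term cancels in the difference, and we obtain, in capacity on $K$, $P_{n;j_1,\dots,j_k}/P_{n;i_1,\dots,i_k}-M_{j_1,\dots,j_k}/M_{i_1,\dots,i_k}=g(z)\,R_n(\myt{\zz}_{S_1})/R_n(\myt{\zz}_{S_0})+(\text{even smaller terms})$ for a fixed meromorphic $g$ on $\myh\CC\setminus F_k$. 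Hence $\tfrac1n\log|P_{n;j_1,\dots,j_k}/P_{n;i_1,\dots,i_k}-M_{j_1,\dots,j_k}/M_{i_1,\dots,i_k}|\le-(u_k(z)-u_{k-1}(z))+o(1)$ in capacity on $K$; exponentiating gives $\mcap\{z\in K:|P_{n;j_1,\dots,j_k}/P_{n;i_1,\dots,i_k}-M_{j_1,\dots,j_k}/M_{i_1,\dots,i_k}|^{1/n}e^{u_k-u_{k-1}}\ge1+\varepsilon\}\to0$ for every $\varepsilon>0$, as required.

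\emph{Main obstacle.} The crux is the potential‑theoretic step: constructing $R_n$ with the correct divisor on $\TRS_{[k]}$ and solving the associated equilibrium problem so as to identify $\psi$ and, in particular, the gap $u_k-u_{k-1}$ between the dominant and sub‑dominant contributions. This is exactly where the extremal property of the Nuttall partition \eqref{R^j} is used, and where the connectedness of $\TRS_{[k]}$ (also needed for Proposition~\ref{M_ne_0}, hence for $M_{i_1,\dots,i_k}\not\equiv0$) is essential; it is carried out for Theorem~\ref{theorem1} in \S\ref{s5}. The remaining points — upgrading the $L^p$ and weak$^*$ convergence of $\tfrac1n\log|R_n|$ to convergence in capacity, and absorbing the boundedly many zeros of the $P_{n;\cdot}$ lying outside a fixed neighbourhood of $F_k$ (assertion 1) of Theorem~\ref{theorem1}) together with the fixed zeros of $M_{i_1,\dots,i_k}$ on $K$ into sets of arbitrarily small capacity — are handled by the standard potential theory on $\myh\CC$.
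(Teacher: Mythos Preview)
Your overall strategy is the same as the paper's: lift to $\TRS_{[k]}$, express each $P_{n;T}(z)$ as $\sum_S M_T(\myt\zz_S)\,\myt R_n(\myt\zz_S)$ (this is exactly the paper's formula \eqref{P_M}), identify $S_0=\{0,\dots,k-1\}$ as the dominant collection because $\myt u(\myt\zz_S)-\myt u(\myt\zz_{S_0})\ge u_k(z)-u_{k-1}(z)$ for $S\ne S_0$, and read off both the limit and the rate from the resulting expansion of the ratio. Your identification $c_{J,S_0}/c_{I,S_0}=M_{j_1,\dots,j_k}/M_{i_1,\dots,i_k}$ is correct.

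Two points where your write-up drifts from what actually happens. First, the divisor of $\myt R_n$ at infinity: it is not that $\myt R_n$ vanishes at ``a distinguished point'' and has poles elsewhere; rather (see \eqref{div_tRn}) it has zeros of order $\sim kn$ at \emph{all} ${m\choose k}$ points of $\myt\pi^{-1}(\infty)$ lying over collections with $l_1>0$, and poles of order $\sim(m+1-k)n$ at the ${m\choose k-1}$ points over collections containing $0$. This does not damage your argument, since what you use is only the comparison of $|\myt R_n|$ between sheets over $K$, not the structure at $\infty$.

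Second, and more substantively, the paper does not pass through an ``equilibrium problem'' or through $\tfrac1n\log|\myt R_n|\to\psi$ in capacity. It is more direct: from the divisor formula one has the exact identity $|\myt R_n(\myt\zz)|=e^{-n\myt u(\myt\zz)}\psi_n(\myt\zz)$ with $\psi_n=\exp\sum_{j=1}^S g(\myt\aa_j(n),\myt\bb_j;\cdot)$ a finite product of bipolar Green functions (equation \eqref{|tRn|_2}). The uniform bound on $g$ away from its poles (Corollary~6 of \cite{ChKoPaSu}) then gives a \emph{pointwise} bound $|\psi_n(\myt\zz_1)/\psi_n(\myt\zz_2)|\le e^{2S\myt C}$ on $\myt\pi^{-1}(K^\varepsilon(n))$, where $K^\varepsilon(n)$ is $K$ minus at most $2S+W$ small discs centred at the (finitely many, but $n$-dependent) projections of the free zeros $\myt\aa_j(n)$, the fixed poles $\myt\bb_j$, and the zeros/poles of the $M$-functions. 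This yields the hard inequality $|h_{n;\cdot}(z)|\le C\,e^{-n(u_k(z)-u_{k-1}(z))}$ uniformly on $K^\varepsilon(n)$ (Statement~\ref{St_P/P}), from which both assertions follow; the passage from $K^\varepsilon(n)$ to $K$ in capacity is then a one-line estimate on the capacity of a bounded number of discs of radius $\varepsilon$ (Corollary~\ref{Cor_P/P}). Your route --- first establishing $\tfrac1n\log|\myt R_n|\to-\myt u$ in capacity and then deducing the ratio asymptotics --- would also work, but it is less sharp (you lose the uniform statement on $K^\varepsilon(n)$) and requires an extra argument to control the exceptional sets; the paper's direct pointwise control of $\psi_n$ avoids this.
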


\section{Reconstruction of the values of a function meromorphic  on $\RS$ from its germ via the polynomial
Hermite--Pad\'e $m$-system}
\label{s3}

In this section, we will consider the following problem. Let $f\in\MM(\RS)$. Assume that we are given some germ
of a~multivalued analytic function $f(\pi^{-1}(z))$ at a~point $z_0\in\myh\CC$ defined in terms of its Taylor series.
(Without loss of generality we assume that $z_0=\infty$.)
The question is, how
to constructively reconstruct the values of~$f$ in `as large a~region as possible' on~$\RS$?
The most obvious way is to use the Weierstrass continuation by reexpanding the Taylor series at points `close' to the boundary of the disk of convergence.
However, this method is not constructive (see~\cite{Hen}).
Another method is to use Pad\'e approximations. By Stahl's theorem (see~\cite{St}), Pad\'e approximations
reconstruct  the values of~$f$ in a~domain $\DD$ on~$\RS$, which projects one-to-one
onto  $\pi(\DD)=\myh\CC\setminus S$, where $S$~is the Stahl compact set, which consists of a~finite number
of analytic arcs (featuring certain regularity at endpoints). So, one can say that the Pad\'e approximations
reconstruct the values of~$f$ on one sheet~$\DD$ of our $(m+1)$-sheeted Riemann surface~$\RS$.
We will show that in the case when all the surfaces $\TRS_{[k]}$, $k=1,\dots,m$, are connected,
the values of~$f$ on all Nuttall sheets, except the `last' $\RS^{(m)}$ and except of the set
$\pi^{-1}(F)$ (see~\eqref{F_j}), are constructively reconstructed with the help of the polynomial Hermite--Pad\'e $m$-system.
It is worth pointing out that the sheet~$\DD$ is not related in any way to the Nuttall sheets of $\RS^{(j)}$.

So, let  $f\in\MM(\RS)$ and let the  functions $1, f, f^2, \dots, f^m$ be independent over the field of rational functions $\mathbb C(z)$.
In what follows, in this section we will consider the polynomial Hermite--Pad\'e $m$-system for $f_j:=f^j$, that is,
this system is constructed from the tuple of germs $[1, f(\pi_0^{-1}(z)), f^2(\pi_0^{-1}(z)),\dots, f^m(\pi_0^{-1}(z))]$, where $\pi_0^{-1}$
is the inverse mapping of~$\pi$ in the neighbourhood of $\pmb\infty^{(0)}$. So, by definition
\begin{equation*}
M_{0,1,\dots,k-1}(z):=\det
\begin{pmatrix}
1 & f(\zz^{(0)}) & \ldots & f^{k-1}(\zz^{(0)})\\
1 & f(\zz^{(1)}) & \ldots & f^{k-1}(\zz^{(1)})\\
\hdotsfor{4}\\
1 & f(\zz^{(k-1)}) & \ldots & f^{k-1}(\zz^{(k-1)})
\end{pmatrix},
\end{equation*}
\begin{equation*}
M_{0,1,\dots,k-2,k}(z):=\det
\begin{pmatrix}
1 & f(\zz^{(0)}) & \ldots & f^{k-2}(\zz^{(0)})& f^{k}(\zz^{(0)})\\
1 & f(\zz^{(1)}) & \ldots & f^{k-2}(\zz^{(1)})& f^{k}(\zz^{(1)})\\
\hdotsfor{5}\\
1 & f(\zz^{(k-1)}) & \ldots & f^{k-2}(\zz^{(k-1)})& f^{k}(\zz^{(k-1)})
\end{pmatrix}.
\end{equation*}
Thus,  $M_{0,1,\dots,k-1}(z)$ is the Vandermonde  determinant, that is,
\begin{equation*}
M_{0,1,\dots,k-1}(z)=\prod_{0\le i<j<k} (f(\zz^{(i)})-f(\zz^{(j)})),
\end{equation*}
and  $M_{0,1,\dots,k-2, k}(z)$ are the matrices
obtained from the Vandermond matrix by increasing the powers of all elements in the last column by~$1$, that is,
\begin{equation*}
M_{0,1,\dots,k-2, k}(z)=\sum_{s=0}^{k-1}f(\zz^{(s)})\cdot\prod_{0\le i<j<k} (f(\zz^{(i)})-f(\zz^{(j)})).
\end{equation*}
Therefore,  $M_{0,\dots,k-2,k}(z)/M_{0, \dots,k-1}(z)
=\sum_{s=0}^{k-1}f(\zz^{(s)})$. The next result now follows from Theorem \ref{theorem2}.

\begin{Corollary}
\label{Cor_rec}
Suppose that the surface $\TRS_{[k]}$, which is constructed from~$\pi$, is connected.
Let $f\in\MM(\RS)$ and let the functions $1, f, f^2, \dots, f^m$ be independent over~$\mathbb C(z)$.
We set $f_j:=f^j$. Then, for any compact set $K\subset \CC\setminus F_k$, as $n\to\infty$
\begin{equation}
\label{int_t2_1}
\frac{P_{n;0,1,\dots,k-2, k}(z)}{P_{n;0,1,\dots,k-1}(z)}
\xrightarrow{\mcap}
\sum_{s=0}^{k-1}f(\zz^{(s)}),
\quad z\in K.
\end{equation}
Moreover, for an arbitrary $\varepsilon>0$,
\begin{equation}
\mcap\left\{z\in K:\left|\frac{P_{n;0,1,\dots,k-2, k}(z)}{P_{n;0,1,\dots,k-1}(z)}-
\sum_{s=0}^{k-1}f(\zz^{(s)})\right|^{1/n}\cdot
e^{u_{k}(z)-u_{k-1}(z)}\ge 1+\varepsilon\right\}
\to 0.
\end{equation}
\end{Corollary}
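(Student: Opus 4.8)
The plan is to obtain the Corollary as a direct specialization of Theorem~\ref{theorem2} together with the elementary determinant identities that are recorded just above its statement. First I would observe that, for the choice $f_j:=f^j$, the hypothesis of the Corollary---that $1,f,f^2,\dots,f^m$ be independent over $\CC(z)$---is \emph{exactly} the standing hypothesis that $1,f_1,\dots,f_m$ be independent over $\CC(z)$ under which the matrix $A$ in \eqref{A}, the minors $M_{j_1,\dots,j_k}$, and the whole polynomial Hermite--Pad\'e $m$-system were introduced. Consequently all the objects entering Theorem~\ref{theorem2} are well defined for these germs and that theorem applies without change.

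Next I would make the two relevant minors explicit. With $f_j=f^j$, the $k\times k$ submatrix of $A(z)$ on the columns $0,1,\dots,k-1$ and the rows $0,1,\dots,k-1$ is the Vandermonde matrix in $f(\zz^{(0)}),\dots,f(\zz^{(k-1)})$, so
\[
M_{0,1,\dots,k-1}(z)=\prod_{0\le i<j<k}\bigl(f(\zz^{(i)})-f(\zz^{(j)})\bigr),
\]
while replacing its last column (entries $f^{k-1}(\zz^{(s)})$) by the column with index $k$ (entries $f^{k}(\zz^{(s)})$) produces $M_{0,1,\dots,k-2,k}(z)$. By the classical identity that bumping one exponent in a Vandermonde determinant multiplies it by the corresponding Schur polynomial---here the partition is $(1,0,\dots,0)$, whose Schur polynomial is the sum of the variables---one obtains
\[
M_{0,1,\dots,k-2,k}(z)=\Bigl(\sum_{s=0}^{k-1}f(\zz^{(s)})\Bigr)\prod_{0\le i<j<k}\bigl(f(\zz^{(i)})-f(\zz^{(j)})\bigr).
\]
These are precisely the computations already carried out just before the statement of the Corollary.

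Since $\TRS_{[k]}$ is connected, Proposition~\ref{M_ne_0} guarantees that $M_{0,1,\dots,k-1}$ does not vanish identically on any neighbourhood in $\myh\CC\setminus F$ (equivalently, the Vandermonde factor is not $\equiv 0$, because $f(\zz^{(0)}),\dots,f(\zz^{(m)})$ are the $m+1$ roots, pairwise distinct for generic $z$, of the degree-$(m+1)$ minimal polynomial of $f$ over $\CC(z)$). Hence the ratio
\[
\frac{M_{0,1,\dots,k-2,k}(z)}{M_{0,1,\dots,k-1}(z)}=\sum_{s=0}^{k-1}f(\zz^{(s)})
\]
is a well-defined meromorphic function on $\myh\CC\setminus F_k$, and substituting $[j_1,\dots,j_k]=[0,1,\dots,k-2,k]$ and $[i_1,\dots,i_k]=[0,1,\dots,k-1]$ into the two displayed assertions of Theorem~\ref{theorem2}, together with this identity, yields the two assertions of the Corollary verbatim. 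I do not expect a genuine obstacle here: all the analytic content lies in Theorem~\ref{theorem2}, and the only points needing a line of care are the symmetric-function evaluation of the two minors and the remark that $M_{0,1,\dots,k-1}\not\equiv 0$ so that the ratio and the hypotheses of Theorem~\ref{theorem2} make sense.
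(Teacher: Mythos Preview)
Your proposal is correct and follows exactly the paper's own argument: the paper computes the two minors via the Vandermonde identities, obtains $M_{0,\dots,k-2,k}/M_{0,\dots,k-1}=\sum_{s=0}^{k-1}f(\zz^{(s)})$, and then simply states that the Corollary follows from Theorem~\ref{theorem2}. Your additional remarks (the Schur-polynomial interpretation and the appeal to Proposition~\ref{M_ne_0}) are fine but not strictly needed, since the paper has already recorded the determinant computations immediately before the statement.
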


Let us assume that the projection $\pi$ is such that the  surfaces $\TRS_{[k]}$ are connected for all $k=1,\dots, m$. (In Statement~\ref{St_conn}
it will be shown that this property is satisfied by all~$\pi$ with the condition that all critical points of~$\pi$ are of the first order and
over each point $z\in\myh\CC$ there is at most one critical point of~$\pi$, that is,  the class of such~$\pi$ is quite broad.)
So, evaluating the $k$th polynomials of the Hermite--Pad\'e $m$-system for all $k=1,\dots, m$ and considering
the ratios $P_{n;0,1,\dots,k-2, k}/P_{n;0,1,\dots,k-1}(z)$,
we will in succession asymptotically reconstruct $\sum_{s=0}^{k-1}f(\zz^{(s)})$ outside $\pi^{-1}(F_k)$. Hence (since $F:=\cup_{j=1}^mF_j$),
we will also reconstruct the values of~$f$ on all Nuttall sheets of~$\RS$, except $\RS^{(m)}$, outside
the set $\pi^{-1}(F)$. Since for the evaluation of the $k$th polynomials
of the Hermite--Pad\'e $m$-system
of order~$n$ it suffices to know the first $(m+1)n$ Taylor coefficients of the germs that define the polynomials
(see definition~\eqref{khp_i}), our approximations of $P_{n;0,1,\dots,k-2, k}/P_{n;0,1,\dots,k-1}(z)$ are constructive.
Note that the idea that using suitable polynomials one should reconstruct the sum of the values on the first $k$~sheets,
rather than the values of the function~$f$ on the Nuttall sheets, was expressed for the first time in~\cite{Su}.

\section{The Riemann  surface $\TRS_{[k]}$  and the definition of the $k$th polynomials of the Hermite--Pad\'e $m$-system in terms of this surface}
\label{s4}

First of all, we fix $k\in[1,\dots,m]$. We introduce the compact Riemann surfaces $\TRS_{[k]}$, which is associated
with~$\RS$,  and which is, in general, disconnected; along with $\TRS_{[k]}$ we introduce the branched covering $\tpi:\TRS_{[k]}\to\myh\CC$,
which is constructed from the projection~$\pi$.
Since  $\pi\colon \RS\to\myh\CC$
$(\zz\mapsto z)$ is an $(m+1)$-sheeted holomorphic branched covering of $\myh\CC$ and since $\Sigma$ is the set of
critical values of the projection~$\pi$, we can consider~$\RS$ as the standard compactification of the  Riemann surface $\RS'$
of some $(m+1)$-valued global analytic function (GAF) $w(z)$ defined in the domain $\myh\CC\setminus\Sigma$.
In fact,  $w(z)=\pi^{-1}(z)$. Informally speaking, the surface $\TRS_{[k]}$ is the standard compactification of the Riemann surface $\TRS_{[k]}'$ of all
possible \textit{unordered} collections of  $k$~distinct germs of the function $w(\cdot)$
that are considered at the same points $z\in\myh\CC\setminus\Sigma$.
More precisely, the surface $\TRS_{[k]}'$ consists of the pairs $(z, \{w_1^z,\dots,w^z_{k}\})$, where $z\in\myh\CC\setminus\Sigma$,
and $\{w^z_1,\dots,w^z_{k}\}$ is an \textit{unordered} collection of~$k$ distinct germs of the function  $w(\cdot)$ at the point~$z$.
The structure of the Riemann surface on this set is introduced as in the process of construction
of the Riemann surface of a~GAF. So, by a~neighbourhood
of a~point $(z_0, \{w^{z_0}_1,\dots,w^{z_0}_{k}\})$ we mean the set of points $(z, \{w^{z}_1,\dots,w^{z}_{k}\})$ such that
1)~$z\in B_{z_0}(\delta)$, where $B_{z_0}(\delta)$ is a~disc with centre $z_0$ and a radius~$\delta$
such that the germs $w^{z_0}_1,\dots,w^{z_0}_{k}$ are holomorphic in~$B_{z_0}(\delta)$, and 2)~there exists
a~bijection between elements of the collections $\{w^{z_0}_1,\dots,w^{z_0}_{k}\}$ and $\{w^{z}_1,\dots,w^{z}_{k}\}$ such that the
corresponding  germs are direct analytic continuations of each other. (Since $w(\cdot)$ is an $(m+1)$-sheeted function,
there exists at most one such a~bijection.)
It is clear that  the Riemann surface $\TRS_{[k]}'$ is a~${m+1\choose k}$-sheeted (disconnected, in general)
holomorphic covering of~$\myh\CC\setminus\Sigma$ (with the natural projection $z:(z, \{w^{z}_1,\dots,w^{z}_{k}\})\mapsto z$). Next,
using the standard process of compactification of a~finite-sheeted covering of the Riemann sphere $\myh\CC$
with finite number of punctures, we get from $\TRS_{[k]}'$ a~compact (disconnected, in general) Riemann surfaces $\TRS_{[k]}$ and
a~holomorphic covering $\tpi:\TRS_{[k]}\to\myh\CC$ (branched at points of~$\Sigma$), which extends the original
covering $z:\TRS_{[k]}'\to\myh\CC\setminus\Sigma$.
The points of $\TRS_{[k]}$ will be written in boldface and marked with a~tilde; their projections, as before,
will be denoted in lightface (for example, $\myt\zz\in\TRS_{[k]}$, and  $\myt\pi(\myt\zz) = z$).
We emphasize that, unlike Theorems \ref{theorem1} and \ref{theorem2},
in this section the surface $\TRS_{[k]}$ can be disconnected.

For $z\in\myh\CC\setminus F$ all the inequalities in \eqref{inequ_u} are strict, and hence, for
such~$z$, we have $\pi^{-1}(z)=\{\zz^{(0)},\zz^{(1)},\dots,\zz^{(m)}\}$, where $\zz^{(j)}\in\RS^{(j)}$. So,
over $\myh\CC\setminus F$ the Riemann surface $\TRS_{[k]}$ splits into ${m+1\choose k}$ disjoint sheets
$\TRS_{[k]}^{(j_1 j_2 \dots j_k)}$, $0\le j_1<j_2<\dots<j_k\le m$. The \textit{sheet $\TRS_{[k]}^{(j_1 j_2 \dots j_k)}$} consists of
unordered collections $\{w^{\zz^{(j_1)}}(\cdot), w^{\zz^{(j_2)}}(\cdot),\dots,w^{\zz^{(j_k)}}(\cdot)\}$, where $w^{\zz^{(j_l)}}(\cdot)$
is the germ of $w(\cdot)$ at the point $\zz^{(j_l)}$, which are considered for all $z\in\myh\CC\setminus F$.
It is clear that on all sheets the mapping  $\myt\pi:\TRS_{[k]}^{(j_1 j_2 \dots j_k)}\to \myh\CC\setminus F$ is biholomorphic.
The point of the sheet  $\TRS_{[k]}^{(j_1 j_2 \dots j_k)}$ lying over  $z\in\myh\CC\setminus F$
will be denoted by $\myt\zz^{(j_1 j_2\dots j_k)}$. Note that we have $u_{k-1}(z)<u_k(z)$  for $z\in\myh\CC\setminus F_k$ in~\eqref{inequ_u}.
Therefore,  the unordered tuple   $\{w^{\zz^{(0)}}(\cdot), w^{\zz^{(1)}}(\cdot),\dots,w^{\zz^{(k-1)}}(\cdot)\}$
is defined over $z\in\myh\CC\setminus F_k$. Moreover, extending it along all possible paths in $\myh\CC\setminus F_k$,
we will always get the  same collection. So, the sheet $\TRS_{[k]}^{(01\dots k-1)}$ is defined over  $\myh\CC\setminus F_k$ (not only
over $\myh\CC\setminus F$) and the projection $\myt\pi:\TRS_{[k]}^{(01\dots k-1)}\to \myh\CC\setminus F_k$ is biholomorphic.

Our next aim  is to give a different definition of the $k$th polynomials of a~Hermite--\allowbreak Pad\'e $m$-system in terms of new
functions which are meromorphic on the  surface $\TRS_{[k]}$. Let us construct such functions from the original
functions $f_1,\dots,f_m$.
For each tuples of indices $0\le j_1<j_2<\dots<j_k\le m$ and $0\le l_1<l_2<\dots<l_k\le m$, by
$M_{j_1,\dots,j_k}^{l_1, \dots, l_k}(z)$ we denote the minor of the matrix $A$ (see~\eqref{A})
corresponding to the columns with numbers $j_1,\dots,j_k$ and the rows with numbers $l_1, l_2, \dots,l_k$.
(In particular, $M_{j_1,\dots,j_k}^{0, \dots, k-1}\equiv M_{j_1,\dots,j_k}$.) By  $A_{j_1,\dots,j_k}^{l_1, \dots, l_k}(z)$
we denote the algebraic complement of the minor $M_{j_1,\dots,j_k}^{l_1, \dots, l_k}(z)$ (that is,
$A_{j_1,\dots,j_k}^{l_1, \dots, l_k}(z)$ is the complementary minor to $M_{j_1,\dots,j_k}^{l_1, \dots, l_k}(z)$ taken
with the sign $(-1)^{j_1\dots+j_k + l_1\dots+l_k}$).
Like the matrix~$A$, the minors $M_{j_1,\dots,j_k}^{l_1, \dots, l_k}(z)$ and their algebraic complements
$A_{j_1,\dots,j_k}^{l_1, \dots, l_k}(z)$ are defined for $z\in\myh\CC\setminus F$.
By  $\|M_{j_1,\dots,j_k}^{l_1, \dots, l_k}(z)\|$ and $\|A_{j_1,\dots,j_k}^{l_1, \dots, l_k}(z)\|$, we will denote the matrices
composed of all elements of $M_{j_1,\dots,j_k}^{l_1, \dots, l_k}(z)$ and $A_{j_1,\dots,j_k}^{l_1, \dots, l_k}(z)$, respectively.
It is well known (see, for example, Theorem~2.4.1 in~\cite{Pras}) that since $\det A\not\equiv 0$, then
\begin{equation}
\label{MA_1}
\|M_{j_1,\dots,j_k}^{l_1, \dots, l_k}(z)\|\cdot\left\|\frac{A_{j_1,\dots,j_k}^{l_1, \dots, l_k}(z)}{\det A(z)}\right\|^T = \Id,
\end{equation}
where $\Id$~is the identity matrix of size ${m+1\choose k}\times {m+1\choose k}$.
For $z\in\myh\CC\setminus F$, we introduce the matrix
\begin{equation}
\label{M_w}
M_{w}(z):=\|w^j(\zz^{(l)})\|_{l,j=0}^{m}=
\begin{pmatrix}
1 & w(\zz^{(0)}) & \ldots & w^m(\zz^{(0)})\\
1 & w(\zz^{(1)}) & \ldots & w^m(\zz^{(1)})\\
\hdotsfor{4}\\
1 & w(\zz^{(m)}) & \ldots & w^m(\zz^{(m)})
\end{pmatrix}.
\end{equation}
For each index set $0\le l_1<l_2<\dots<l_k\le m$, we denote by $M_{w}^{l_1, \dots, l_k}(z)$
the minor of the matrix $M_{w}(z)$ corresponding to the columns with numbers $0,1,\dots,k-1$ and the
rows with numbers $l_1, l_2, \dots,l_k$. Note that  $M_{w}^{l_1, \dots, l_k}(z)\not\equiv 0$. Indeed, it is the Vandermonde determinant,
hence $M_w^{l_1, \dots, l_k}(z)=\prod_{1\le i<j\le k}(w(\zz^{(l_j)})-w(\zz^{(l_i)}))$, and $w(\zz^{(s)})\ne w(\zz^{(t)})$ for all $s\ne t$, $z\in\CC\setminus F$, by the definition of $w(z)$ as the algebraic function
defining the surface $\RS$.
Therefore,  for each index set $0\le j_1<j_2<\dots<j_k\le m$, the  following functions are well defined for $\myt\zz\in\TRS_{[k]}\setminus\myt\pi^{-1}(F)$:
\begin{equation}
\label{MM}
M_{j_1,\dots,j_k}(\myt\zz^{(l_1 \dots l_k)}):=\frac{M_{j_1,\dots,j_k}^{l_1, \dots, l_k}(z)}{M_{w}^{l_1, \dots, l_k}(z)},
\end{equation}
\begin{equation}
\label{AA}
A_{j_1,\dots,j_k}(\myt\zz^{(l_1 \dots l_k)}):=\frac{A_{j_1,\dots,j_k}^{l_1, \dots, l_k}(z)M_{w}^{l_1, \dots, l_k}(z)}{\det A}.
\end{equation}

\begin{Proposition}
\label{M_merom}
For each index set $0\le j_1<\dots<j_k\le m$, the functions
$M_{j_1,\dots,j_k}(\myt\zz)$ and $A_{j_1,\dots,j_k}(\myt\zz)$ extend to meromorphic functions on the whole Riemann surface $\TRS_{[k]}$.
\end{Proposition}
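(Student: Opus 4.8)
The plan is to show that the functions $M_{j_1,\dots,j_k}(\myt\zz)$ and $A_{j_1,\dots,j_k}(\myt\zz)$, which are defined by \eqref{MM} and \eqref{AA} a priori only on $\TRS_{[k]}\setminus\myt\pi^{-1}(F)$, are (i) locally meromorphic there, i.e.\ they glue correctly across $\myt\pi^{-1}(F\setminus F_k)$ and across $\myt\pi^{-1}(F_k)$, and (ii) that the resulting meromorphic functions on $\TRS_{[k]}\setminus\myt\pi^{-1}(\Sigma)$ have at worst poles (no essential singularities) at the finitely many points over $\Sigma$, so that by the standard removable-singularity argument they extend meromorphically to all of $\TRS_{[k]}$.

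First I would check the gluing across the arcs of $F$. Fix a point $z_0$ lying on a single analytic arc of $F$, away from the finitely many crossing points, and away from $\Sigma$. Crossing such an arc permutes the sheets $\RS^{(l)}$ of the Nuttall partition lying over a neighbourhood of $z_0$ by a transposition $(l\leftrightarrow l+1)$ of two consecutive indices (the two sheets whose common boundary projects to that arc). Under this transposition, the value $\myt\zz^{(l_1\dots l_k)}$ on $\TRS_{[k]}$ is carried to $\myt\zz^{(l_1'\dots l_k')}$ where $(l_1',\dots,l_k')$ is the reordering of the image index set; both the numerator $M_{j_1,\dots,j_k}^{l_1,\dots,l_k}(z)$ and the denominator $M_w^{l_1,\dots,l_k}(z)$ in \eqref{MM} are minors built from the same rows $l_1,\dots,l_k$ of $A(z)$ and of $M_w(z)$ respectively, so swapping two rows multiplies \emph{both} by $-1$ if both indices $l,l+1$ belong to $\{l_1,\dots,l_k\}$, or permutes the collection of these minors among themselves if exactly one does, or leaves them unchanged if neither does. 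In every case the ratio \eqref{MM} (and similarly \eqref{AA}, using that $\det A$ changes sign across $F$) is carried to the value of the \emph{same} function at the point on the other side, i.e.\ it extends holomorphically (as a meromorphic germ) across the arc. This is exactly the argument already used in the excerpt for the special case of the ratios $M_{j_1,\dots,j_k}/M_{i_1,\dots,i_k}$ across $F\setminus F_k$; here there is the additional point that across $F_k$ itself the rows $0,\dots,k-1$ can get mixed with rows $k,\dots,m$, but this only permutes the \emph{sheets} $\TRS_{[k]}^{(l_1\dots l_k)}$ of $\TRS_{[k]}$ among one another, and on each of them $M_{j_1,\dots,j_k}$ was defined by the same recipe \eqref{MM}, so the germs still match up. One also has to note that the finitely many crossing points of $F$ and the points over $\Sigma$ and over the poles of the $f_j$'s form a discrete set, so meromorphic extension across the arcs automatically yields a meromorphic germ near those isolated points once boundedness/finite order is checked.

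Then I would handle the points $\myt\zz_*\in\TRS_{[k]}$ over $\Sigma$ (the genuinely new feature compared with gluing). Near such a point choose a local coordinate $\tau$ on $\TRS_{[k]}$ with $\tau(\myt\zz_*)=0$; then $z=\tpi(\myt\zz)$ is a holomorphic function of $\tau$ vanishing to some order at $0$. Near $\myt\zz_*$ the underlying sheets $\zz^{(l_1)},\dots,\zz^{(l_k)}$ of $\RS$ are given by a tuple of branches of $\pi^{-1}$, each a Puiseux series in a fractional power of $(z-z_*)$; the functions $f_j(\zz^{(l_s)})$ and $w(\zz^{(l_s)})$ are therefore meromorphic functions of $\tau$ in a punctured neighbourhood of $0$ with at worst a pole at $0$ (meromorphy of $f_j$ on $\RS$ plus the Puiseux structure). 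Consequently the minors $M_{j_1,\dots,j_k}^{l_1,\dots,l_k}(z)$, $M_w^{l_1,\dots,l_k}(z)$, $A_{j_1,\dots,j_k}^{l_1,\dots,l_k}(z)$ and $\det A(z)$, pulled back via $\tpi$, are meromorphic in $\tau$ near $0$; and $M_w^{l_1,\dots,l_k}(z)\not\equiv 0$ and $\det A\not\equiv 0$ (the latter by the $\mathbb C(z)$-independence of $1,f_1,\dots,f_m$, already invoked in the excerpt), so the ratios \eqref{MM} and \eqref{AA} are meromorphic in the punctured disk and have a well-defined finite order at $0$; Riemann's removable-singularity theorem for meromorphic functions then gives the meromorphic extension across $\myt\zz_*$. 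Combining the gluing across $F$ with this local extension over $\Sigma$ (and likewise over any poles of the $f_j$ sitting over $\infty$ or elsewhere) yields a meromorphic function on all of $\TRS_{[k]}$.

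The main obstacle is purely bookkeeping: keeping track of how an elementary transposition of two consecutive Nuttall sheets over an arc of $F$ acts simultaneously on the index sets $\{l_1,\dots,l_k\}$ labelling the sheets $\TRS_{[k]}^{(l_1\dots l_k)}$ and on the sign of each minor $M_{j_1,\dots,j_k}^{l_1,\dots,l_k}$, $M_w^{l_1,\dots,l_k}$, $A_{j_1,\dots,j_k}^{l_1,\dots,l_k}$, and $\det A$, and verifying that in \emph{all} cases the ratio in \eqref{MM} (resp.\ \eqref{AA}) at a point on one side of the arc equals the value of the \emph{same} globally-defined function at the continued point on the other side. Once this compatibility is established, the extension over the branch points is routine via Puiseux expansions and the removable-singularity theorem, and the hypothesis about connectedness of $\TRS_{[k]}$ is not even needed here (it only becomes relevant later, to guarantee non-vanishing statements such as Proposition~\ref{M_ne_0}).
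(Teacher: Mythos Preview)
Your argument for $M_{j_1,\dots,j_k}(\myt\zz)$ is essentially the paper's: both check that numerator and denominator in \eqref{MM} pick up the same sign under a row transposition induced by crossing an arc of $\myt\pi^{-1}(F)$, so the ratio glues. You are more explicit than the paper about the isolated points over $\Sigma$, handling them by Puiseux expansions and the removable-singularity theorem; the paper leaves this implicit in the phrase ``it suffices to check \dots\ across the arcs,'' relying on the standard compactification of $\TRS_{[k]}'$.

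Where you genuinely diverge is in the treatment of $A_{j_1,\dots,j_k}(\myt\zz)$. You propose to repeat the direct gluing check for the expression \eqref{AA}, tracking how the algebraic complement $A_{j_1,\dots,j_k}^{l_1,\dots,l_k}(z)$, the Vandermonde minor $M_w^{l_1,\dots,l_k}(z)$, and $\det A$ each transform under a row swap. This works, but the case analysis is a bit heavier than for $M$ (the complementary minor involves the rows \emph{not} in $\{l_1,\dots,l_k\}$, and the sign $(-1)^{\sum j_s+\sum l_s}$ shifts when an index is replaced). The paper instead avoids this computation entirely: it observes from \eqref{MA_2} that the matrix $\|A_{j_1,\dots,j_k}(\myt\zz^{(l_1\dots l_k)})\|$ is the transpose-inverse of $\|M_{j_1,\dots,j_k}(\myt\zz^{(l_1\dots l_k)})\|$, and then argues abstractly that if the columns of a matrix are the sheet-values of meromorphic functions on $\TRS_{[k]}$, the same is true of the rows of its inverse (via the cofactor formula). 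This reduces the meromorphy of the $A$'s to that of the $M$'s in one stroke, and requires only $\det\|M_{j_1,\dots,j_k}(\myt\zz^{(l_1\dots l_k)})\|\not\equiv 0$, not the specific minor structure. Your approach is more hands-on and self-contained; the paper's is shorter and yields a reusable principle (invoked again later for the functions $B_{j_1,\dots,j_k}$ in the proof of Theorem~\ref{t3}).
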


\begin{proof}
Let us first prove the conclusion of the proposition for $M_{j_1,\dots,j_k}(\myt\zz)$. By definition~\eqref{MM}
it is clear that  the function $M_{j_1,\dots,j_k}(\myt\zz)$
is meromorphic  on~$\TRS_{[k]}\setminus\myt\pi^{-1}(F)$. Since, as noted before, $F$~is a~one-dimensional piecewise analytic set
without isolated points (that is, in fact,~$F$ is the closure of a~finite number of analytic arcs), the set $\myt\pi^{-1}(F)$
has the same property. Hence it suffices to check that
the functions $M_{j_1,\dots,j_k}(\myt\zz^{(l_1 \dots l_k)})$ and $M_{j_1,\dots,j_k}(\myt\zz^{(i_1 \dots i_k)})$ are glued together in a~single function
when we cross the arcs from the set $\myt\pi^{-1}(F)$ across that we go from the sheet
$\TRS_{[k]}^{(l_1 \dots l_k)}$ to the sheet $\TRS_{[k]}^{(i_1 \dots i_k)}$. Indeed, since across the chosen arc we go from the sheet
$\TRS_{[k]}^{(l_1 \dots l_k)}$ to the sheet $\TRS_{[k]}^{(i_1 \dots i_k)}$, the minor  $M_{j_1,\dots,j_k}^{l_1, \dots, l_k}(z)$
is transformed, up to a~sign, to the minor $M_{j_1,\dots,j_k}^{i_1, \dots, i_k}(z)$ and
the minor $M_{w}^{l_1, \dots, l_k}(z)$ goes, up to a~sign, to $M_{w}^{i_1, \dots, i_k}(z)$
as the projection of this arc is crossed. In addition, the minors $M_{j_1,\dots,j_k}^{i_1, \dots, i_k}(z)$ and $M_{w}^{i_1, \dots, i_k}(z)$ change
or do not change the sign  simultaneously, since the appearance of the sign change depends only
on the structure of the gluing of the sheets of the original surface $\RS$.

Let us prove the proposition for  $A_{j_1,\dots,j_k}(\myt\zz)$.
By~\eqref{MA_1}, we have
\begin{equation}
\label{MA_2}
\|M_{j_1,\dots,j_k}(\myt\zz^{(l_1 \dots l_k)})\|\cdot\|A_{j_1,\dots,j_k}(\myt\zz^{(l_1 \dots l_k)})\|^T=\Id.
\end{equation}
So, for $z\in\CC\setminus F$, the values of the function $A_{j_1,\dots,j_k}(\myt\zz)$
on the sheets of the  surface $\TRS_{[k]}$ are contained in the row numbered by the tuple $j_1<\dots< j_k$, of the
inverse matrix of the matrix whose columns contain the values of the functions $M_{i_1,\dots,i_k}(\myt\zz)$ on the sheets
of the  same surface.
The functions $A_{j_1,\dots,j_k}(\myt\zz)$ defined from the functions  $M_{i_1,\dots,i_k}(\myt\zz)$ in this way
can always be extended to meromorphic functions on $\TRS_{[k]}$.
(It is worth pointing out that for this purpose it is sufficient that the original functions $M_{j_1,\dots,j_k}(\myt\zz)$,
which are meromorphic on $\TRS_{[k]}$, are only linearly independent over~$\CC(z)$ (this is
equivalent to the condition that $\det\|M_{j_1,\dots,j_k}(\myt\zz^{(l_1 \dots l_k)})\|\not\equiv 0$), that is,
it is not necessary that they are minors of the matrix~$A$).
To check this, we need to use the well-known formula, which represents elements of the inverse matrix
as cofactors of the elements of the original matrix divided by its determinant.
Using this formula, one can easily verify that the functions $A_{j_1,\dots,j_k}(\myt\zz)$
are glued into a~single one as the boundaries of the sheets  are crossed.
Note  that the analogous property of a collection of meromorphic functions for
a~Nuttall partition was pointed out in~\cite{Nut81}, see also Proposition~2 in~\cite{ChKoPaSu}.
\end{proof}

Now we can give a~new definition of  the $k$th polynomials of the Hermite--\allowbreak Pad\'e $m$-system.

\begin{theorem}
\label{t3}
There exists a $p\in\NN\cup\{0\}$ {\rm (}not depending on~$n)$  such that the $k$th polynomials of the Hermite--Pad\'e $m$-system $P_{n;i_1,\dots,i_k}$
satisfy the following relations{\rm :}
\begin{equation}
\label{khp_n}
\sum_{0\le i_1<\dots<i_k\le m} P_{n;i_1,\dots,i_k}(z) A_{i_1,\dots,i_k}(\myt\zz^{(l_1 \dots l_k)}) = O\left(\frac{1}{z^{nk+1-p}}\right) \quad \text{as } z\to\infty
\end{equation}
for all  $0\le l_1<l_2<\dots<l_k\le m$, where $l_1\ne 0$.
\end{theorem}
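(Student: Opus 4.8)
The plan is to recognise that \eqref{khp_n} is nothing but the homogeneous Hermite--Pad\'e conditions \eqref{khp} paired against the cofactors of the matrix~$A$, the drop of the exponent from $kn+1$ to $kn+1-p$ accounting precisely for the ($n$-independent) orders of the poles at~$\infty$ of the algebraic functions built out of $f_1,\dots,f_m$. As a preliminary: since $\pmb\infty^{(0)}\in\RS^{(0)}$, near~$\infty$ we have $\zz^{(0)}=\pi_0^{-1}(z)$, so row~$0$ of $A(z)$ (see \eqref{A}) is the germ $\mathbf a_0(z)=\big(1,f_{1,\infty}(z),\dots,f_{m,\infty}(z)\big)$; and, by the equivalence of \eqref{khp_i} and \eqref{khp}, for every $(k{+}1)$-tuple $0\le a_0<a_1<\dots<a_k\le m$ the quantity
\[
R_{a_0,\dots,a_k}(z):=\sum_{s=0}^{k}(-1)^{s}f_{a_s,\infty}(z)\,P_{n;a_0,\dots,a_{s-1},a_{s+1},\dots,a_k}(z)
\]
is $O(z^{-(kn+1)})$ as $z\to\infty$.

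I would then reduce to an estimate for minors of~$A$. By \eqref{AA},
\[
\sum_{0\le i_1<\dots<i_k\le m}P_{n;i_1,\dots,i_k}(z)\,A_{i_1,\dots,i_k}\big(\myt\zz^{(l_1\dots l_k)}\big)
=\frac{M_{w}^{\,l_1,\dots,l_k}(z)}{\det A(z)}\,\Sigma_n(z),
\]
where $\Sigma_n(z):=\sum_{0\le i_1<\dots<i_k\le m}P_{n;i_1,\dots,i_k}(z)\,A_{i_1,\dots,i_k}^{\,l_1,\dots,l_k}(z)$. Since $\det A$, the Vandermonde minors $M_{w}^{\,l_1,\dots,l_k}$ and all minors of~$A$ are polynomial expressions in $1,f_1,\dots,f_m$, which are meromorphic at the finitely many points of $\pi^{-1}(\infty)$, each of them, restricted to a sheet over a punctured neighbourhood of~$\infty$, is $O(z^{p_0})$ for a fixed non-negative integer $p_0$ independent of~$n$ --- equivalently, by Proposition~\ref{M_merom} the meromorphic functions $A_{i_1,\dots,i_k}$ on the compact surface $\TRS_{[k]}$ have pole orders over~$\infty$ bounded independently of~$n$. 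So it is enough to show $\Sigma_n(z)=O(z^{-(kn+1-p_0)})$ as $z\to\infty$ whenever $l_1\ne0$.

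To do this I fix $0<l_1<\dots<l_k\le m$, so $0\notin\{l_1,\dots,l_k\}$, and let $l'_1<\dots<l'_{m-k}$ enumerate $\{0,\dots,m\}\setminus\{0,l_1,\dots,l_k\}$. By definition $A_{i_1,\dots,i_k}^{\,l_1,\dots,l_k}(z)$ equals $(-1)^{i_1+\dots+i_k+l_1+\dots+l_k}$ times the minor of~$A$ on the columns $c_1<\dots<c_{m+1-k}$ complementary to $\{i_1,\dots,i_k\}$ and on the rows $0,l'_1,\dots,l'_{m-k}$. I expand this $(m{+}1{-}k)\times(m{+}1{-}k)$ minor along its first row --- which near~$\infty$ is the restriction of $\mathbf a_0$ --- obtaining $\sum_c\pm f_{c,\infty}(z)\,M^{\,l'_1,\dots,l'_{m-k}}_{J}(z)$ with $J=\{c_1,\dots,c_{m+1-k}\}\setminus\{c\}$ and the usual Laplace sign. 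Summing over $(i_1,\dots,i_k)$ and reindexing the double sum by the bijection $\big((i_1,\dots,i_k),c\big)\mapsto(J,c)$, $J:=\{0,\dots,m\}\setminus\{i_1,\dots,i_k,c\}$ (so that the complement of~$J$ is the $(k{+}1)$-set $\{i_1,\dots,i_k\}\cup\{c\}$), the signs collapse: using $c=\#\{x<c\}=\#\{x\in J:x<c\}+\#\{x\notin J:x<c\}$ one checks that the factor $(-1)^{i_1+\dots+i_k}$ together with the Laplace sign becomes a sign depending only on~$J$ times the alternating sign $(-1)^{s}$ of \eqref{khp}, and the inner sum over~$c$ turns into exactly $R_{a_0,\dots,a_k}$ with $\{a_0,\dots,a_k\}=\{0,\dots,m\}\setminus J$. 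Hence
\[
\Sigma_n(z)=\sum_{J}\varepsilon_J\,M^{\,l'_1,\dots,l'_{m-k}}_{J}(z)\,R_{\{0,\dots,m\}\setminus J}(z),\qquad\varepsilon_J\in\{\pm1\},
\]
the sum being over $(m{-}k)$-subsets~$J$. Since $R_{\{0,\dots,m\}\setminus J}(z)=O(z^{-(kn+1)})$ and $M^{\,l'_1,\dots,l'_{m-k}}_{J}(z)=O(z^{p_0})$, this gives $\Sigma_n(z)=O(z^{-(kn+1-p_0)})$, and together with the previous paragraph it proves \eqref{khp_n} with $p:=2p_0$ (or any larger non-negative integer), depending only on $\pi$ and $f_1,\dots,f_m$.

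The two points that need care are the sign bookkeeping just sketched --- that the Laplace signs, the factor $(-1)^{i_1+\dots+i_k+l_1+\dots+l_k}$ and the reindexing really reproduce the $R_{a_0,\dots,a_k}$ of \eqref{khp} --- and the uniform-in-$n$ bound on pole orders at~$\infty$, which is the sole cause of the exponent loss~$p$; neither is deep, and I expect the sign verification to be the more tedious. Conceptually the signs become transparent in exterior-algebra terms: writing $\omega_n:=\sum_{0\le i_1<\dots<i_k\le m}P_{n;i_1,\dots,i_k}\,e_{i_1}\wedge\dots\wedge e_{i_k}$, condition~\eqref{khp} says exactly that $\mathbf a_0\wedge\omega_n=O(z^{-(kn+1)})$ near~$\infty$, while the left-hand side of \eqref{khp_n} equals $M_{w}^{\,l_1,\dots,l_k}$ times the pairing of $\omega_n$ with $\mathbf b^{l_1}\wedge\dots\wedge\mathbf b^{l_k}$, where $\mathbf b^{0},\dots,\mathbf b^{m}$ is the basis dual to the rows of~$A$ (this uses \eqref{MA_1}); as $l_1\ne0$ gives $\mathbf a_0\cdot\mathbf b^{l_q}=0$ for all~$q$, the adjunction between exterior multiplication by $\mathbf a_0$ and contraction with $\mathbf a_0$ rewrites that pairing as the pairing of $\mathbf a_0\wedge\omega_n$ with $\mathbf b^{0}\wedge\mathbf b^{l_1}\wedge\dots\wedge\mathbf b^{l_k}$, which is therefore $O(z^{-(kn+1-p_0)})$.
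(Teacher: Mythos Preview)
Your proof is correct and follows a route genuinely different from the paper's. Both arguments rest on the same linear algebra --- the Laplace expansion relating the minors of $A$ to those of its cofactor matrix --- but you organise it in the forward direction while the paper goes backward. Concretely: the paper substitutes the inversion formula $P_{n;I}=\sum_{L}M_I(\myt\zz^{L})\myt R_n(\myt\zz^{L})$ into the defining conditions~\eqref{khp_i}, recognises the resulting bracket as the minor $M^{0,l_1,\dots,l_k}_{0,j_1,\dots,j_k}$ expanded along row~$0$, and is then left with a linear system for the values $\myt R_n(\myt\zz^{(l_1\dots l_k)})$; it must verify that the coefficient matrix $\|B_{j_1,\dots,j_k}(\myt\zz^{(l_1\dots l_k)})\|$ is invertible (via the generalised Sylvester identity) and that the inverse has entries meromorphic on $\OO_\infty^1$ with bounded pole orders at~$\infty$. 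You avoid all of this by expanding each cofactor $A^{l_1,\dots,l_k}_{i_1,\dots,i_k}$ along its row~$0$ directly, and after the reindexing $(I,c)\mapsto(J,\{0,\dots,m\}\setminus J)$ the sum over~$c$ becomes one of the \emph{homogeneous} Hermite--Pad\'e remainders~\eqref{khp}, which you know to be $O(z^{-(kn+1)})$ from the equivalence of \eqref{khp_i} and \eqref{khp} established in~\S\ref{s1}.

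What your approach buys is directness: no system to invert, no Sylvester identity, and a transparent explanation --- especially in the exterior-algebra formulation --- of why the condition $l_1\ne 0$ is exactly what is needed (it keeps row~$0$ in the complementary minor, so that pairing with $\mathbf b^{l_1}\wedge\dots\wedge\mathbf b^{l_k}$ factors through $\mathbf a_0\wedge\omega_n$). The paper's route, while longer, has the advantage that the matrix $B$ and its inverse are set up for reuse in the converse direction (Corollary~2), where one shows that, under a general-position hypothesis, \eqref{khp_n} with $p=0$ is actually \emph{equivalent} to the original definition \eqref{khp_i}. The sign bookkeeping you flag is indeed routine; your exterior-algebra paragraph is the cleanest way to see it.
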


\begin{proof}
Let $P_{n;i_1,\dots,i_k}(z)$ be the $k$th polynomials of the Hermite--Pad\'e $m$-system, that is, the solution of~\eqref{khp_i}.
Define the function
\begin{equation}
\label{tRn}
\myt R_n(\myt\zz):=\sum_{0\le i_1<\dots<i_k\le m} P_{n;i_1,\dots,i_k}(z) A_{i_1,\dots,i_k}(\myt\zz).
\end{equation}
Since $A_{i_1,\dots,i_k}(\myt\zz)\in\MM(\TRS_{[k]})$ (see Proposition~\ref{M_merom}), we have $\myt R_n\in\MM(\TRS_{[k]})$. We need to check that  for $\myt R_n$ the right-hand side of~\eqref{khp_n}
holds true.
We look at the identity
\begin{equation}
\label{tRn_2}
\myt R_n(\myt\zz^{(l_1 \dots l_k)})=\sum_{0\le i_1<\dots<i_k\le m} P_{n;i_1,\dots,i_k}(z) A_{i_1,\dots,i_k}(\myt\zz^{(l_1 \dots l_k)})
\end{equation}
for all $0\le l_1<l_2<\dots<l_k\le m$ as a~system of linear homogeneous equations for $P_{n;i_1,\dots,i_k}(z)$.
The matrices  $\|M_{i_1,\dots,i_k}(\myt\zz^{(l_1 \dots l_k)})\|$ and $\|A_{i_1,\dots,i_k}(\myt\zz^{(l_1 \dots l_k)})\|^T$
are mutually inverse (see \eqref{MA_2}), and hence, solving this system, we have
\begin{equation}
\label{P_M}
P_{n;i_1,\dots,i_k}(z)=\sum_{0\le l_1<\dots<l_k\le m} M_{i_1,\dots,i_k}(\myt\zz^{(l_1\dots l_k)})\myt R_n(\myt\zz^{(l_1\dots l_k)}).
\end{equation}
Substituting these expressions for $P_{n;i_1,\dots,i_k}$ in \eqref{khp_i}, we see that, for any fixed index set
$0<j_1<\dots<j_k\le m$,
\begin{equation}
\label{t3_1}
\begin{aligned}
&\sum_{0\le l_1<\dots<l_k\le m}\Biggl[ M_{j_1,\dots,j_k}(\myt\zz^{(l_1\dots l_k)})+ \\
&\sum_{s=1}^k (-1)^{s}M_{0,j_1,\dots,j_{s-1}, j_{s+1},\dots,j_k}(\myt\zz^{(l_1\dots l_k)})f_{j_s,\infty}(z)
\Biggr]\myt R_n(\myt\zz^{(l_1\dots l_k)})
=O\left(\frac{1}{z^{kn+1}}\right)
\end{aligned}
\end{equation}
as $z\to\infty$. Substituting in \eqref{t3_1} the explicit expression for $M_{i_1,\dots,i_k}(\myt\zz^{(l_1\dots l_k)})$~\eqref{MM} and taking into account that in the neighbourhood of~$\infty$ the germ $f_{j_s,\infty}(z)=f_{j_s}(\zz^{(0)})$, we get
\begin{equation}
\label{t3_2}
\begin{aligned}
&\sum_{0\le l_1<\dots<l_k\le m}\Biggl[M_{j_1,\dots,j_k}^{l_1, \dots, l_k}(z) +\\
&\sum_{s=1}^k (-1)^{s}M_{0,j_1,\dots,j_{s-1}, j_{s+1},\dots,j_k}^{l_1, \dots, l_k}(z)f_{j_s}(\zz^{(0)})
\Biggr] \frac{\myt R_n(\myt\zz^{(l_1\dots l_k)})}{M_{w}^{l_1, \dots, l_k}(z)}
=O\left(\frac{1}{z^{kn+1}}\right)
\end{aligned}
\end{equation}
as $z\to\infty$. Note that  in the square brackets in \eqref{t3_2} we have
the expansion of the minor $M_{0,j_1,\dots,j_k}^{0,l_1, \dots, l_k}(z)$ of the matrix~$A$ along the zero row.
(In particular, for $l_1=0$ we have~$0$ in the square brackets.) Hence \eqref{t3_2} is equivalent to the following
\begin{equation}
\label{t3_3}
\sum_{0< l_1<\dots<l_k\le m}\frac{M_{0,j_1,\dots,j_k}^{0,l_1, \dots, l_k}(z)}{M_{w}^{l_1, \dots, l_k}(z)}
\myt R_n(\myt\zz^{(l_1\dots l_k)})
=O\left(\frac{1}{z^{kn+1}}\right) \quad \text{as } z\to\infty.
\end{equation}

Let $O_\infty\:=\{z\in\myh\CC: |z|>\delta\}$ be  a~neighborhood of infinity such that $O_\infty\cap F_1 = \varnothing$.
(Recall that $F_1$~is the boundary of the sheet  $\RS^{(0)}$ of the  original surface $\RS$.)
Then over~$O_\infty$ sheets $\TRS_{[k]}^{(0 s_2\dots s_k)}$ and $\TRS_{[k]}^{(l_1\dots l_k)}$ for $l_1 > 0$ have no
common boundary points. Hence
$\TRS_{[k]}\cap\myt\pi^{-1}(O_\infty) = \OO_\infty^0\sqcup\OO_\infty^1$, where $\OO_\infty^0:=\myt\pi^{-1}(O_\infty)\cap\overline{\bigcup_{0<l_2<\dots<l_k\le m}\TRS_{[k]}^{(0l_2\dots l_k)}}$ and $\OO_\infty^1:=\myt\pi^{-1}(O_\infty)\cap\overline{\bigcup_{0<l_1<\dots<l_k\le m}\TRS_{[k]}^{(l_1\dots l_k)}}$.
For  $0<j_1<\dots j_k \le m$, we define on $\OO_\infty^1\setminus\myt\pi^{-1}(F)$ the functions  $B_{j_1,\dots,j_k}$ by
\begin{equation}
\label{BB}
B_{j_1,\dots,j_k}(\myt\zz^{(l_1 \dots l_k)}):=\frac{M_{0,j_1,\dots,j_k}^{0,l_1, \dots, l_k}(z)}{M_{w}^{l_1, \dots, l_k}(z)}.
\end{equation}
Since 1) for fixed  $0<l_1<\dots l_k\le m$, the functions $B_{j_1,\dots,j_k}(\myt\zz^{(l_1 \dots l_k)})$ are identically equal
to the expression in the square brackets in \eqref{t3_1}, 2) by Proposition~\ref{M_merom},
all functions $M_{i_1,\dots,i_k}(\myt\zz)$ are meromorphic on $\TRS_{[k]}$ (and, in particular, on $\OO_\infty^1$),
and 3) the germs $f_{i,\infty}$ are holomorphic on~$O_\infty$,  the functions  $B_{j_1,\dots,j_k}(\myt\zz)$
extend to meromorphic  functions on~$\OO_\infty^1$.

Let us now look at expression \eqref{t3_3} for all $0< j_1<\dots<j_k\le m$  as a~system of linear homogeneous equations for
$\myt R_n(\myt\zz^{(l_1\dots l_k)})$ for  $0< l_1<\dots<l_k\le m$, assuming that $\myt\zz\in\OO_\infty^1\setminus\myt\pi^{-1}(F)$.
First of all, we evaluate the determinant of the matrix of this system
$\|B_{j_1,\dots,j_k}(\myt\zz^{(l_1 \dots l_k)})\|$. By the generalized Sylvester identity (see, for example,
\S~2.7 in~\cite{Pras}), which expresses the determinant of the matrix composed of all minors
of a~given order and containing a~fixed corner minor (in our case, an element) in terms of the determinant of the original matrix, we have
$\det\|M_{0,j_1,\dots,j_k}^{0,l_1, \dots, l_k}(z)\|=(\det A)^{{m-1\choose k-1}}$. Therefore,
\begin{equation}
\det \|B_{j_1,\dots,j_k}(\myt\zz^{(l_1 \dots l_k)})\| = \det\left\|\frac{M_{0,j_1,\dots,j_k}^{0,l_1, \dots, l_k}(z)}{M_{w}^{l_1, \dots, l_k}(z)}\right\|=
\frac{(\det A)^{{m-1\choose k-1}}}{\prod\limits_{0<l_1<\dots<l_k\le m}M_{w}^{l_1, \dots, l_k}(z)}.
\end{equation}
Since, as pointed out above, $\det A\not\equiv 0$ (because the original functions $1, f_1,\dots,f_m$ are linearly independent over~$\CC(z)$) and $M_{w}^{l_1, \dots, l_k}(z)\not\equiv 0$
for all $0\le l_1<\dots<l_k\le m$ (see the definition of the
functions $M_{j_1,\dots,j_k}(\myt\zz^{(l_1\dots l_k)})$~\eqref{MM}),
we also have $\det \|B_{j_1,\dots,j_k}(\myt\zz^{(l_1 \dots l_k)})\|\not\equiv 0$. Hence the
matrix $\|B_{j_1,\dots,j_k}(\myt\zz^{(l_1 \dots l_k)})\|$ is invertible. Moreover, since
the columns of the matrix $\|B_{j_1,\dots,j_k}(\myt\zz^{(l_1 \dots l_k)})\|$ contain the
values on the sheets of the surface $\OO_\infty^1$ of functions that are meromorphic  on this surface, we obtain that
the rows of the inverse matrix $\|B_{j_1,\dots,j_k}(\myt\zz^{(l_1 \dots l_k)})\|^{-1}$ contain
values on the sheets of the  surface $\OO_\infty^1$ of functions that are also meromorphic  on this surface.
(This property, which can be verified directly, is, in fact, established in the proof of Proposition~\ref{M_merom}
for the functions $A_{j_1,\dots,j_k}(\myt\zz)$: here one only needs to replace the surface $\TRS_{[k]}$ by~$\OO_\infty^1$.)
So, the singularities of the matrix $\|B_{j_1,\dots,j_k}(\myt\zz^{(l_1 \dots l_k)})\|^{-1}$ over~$\infty$ are `finite'.
More precisely, let $O\subset O_\infty$ be a~neighborhood of~$\infty$ such that on the set $\myt\pi^{-1}(O)$ the poles
of the meromorphic functions, contained in the rows of the matrix $\|B_{j_1,\dots,j_k}(\myt\zz^{(l_1 \dots l_k)})\|^{-1}$,
may lie only at points of the set $\myt\pi^{-1}(\infty)$. Then  there exists a~$p\in\NN\cup \{0\}$
such that the matrix  $z^p\|B_{j_1,\dots,j_k}(\myt\zz^{(l_1 \dots l_k)})\|^{-1}$ (that is, the matrix obtained from
 $\|B_{j_1,\dots,j_k}(\myt\zz^{(l_1 \dots l_k)})\|^{-1}$ by multiplication of each of its element by~$z^p$) is bounded in~$O$.
Solving system~\eqref{t3_3} for $\myt R_n(\myt\zz^{(l_1\dots l_k)})$, we arrive at the conclusion of the theorem.
\end{proof}

\begin{Remark}
In the case $\infty\notin\Sigma$ and if all the functions $A_{i_1,\dots,i_k}(\myt\zz)$
have no poles at points of the set $\myt\pi^{-1}(\infty)$, there always exist polynomials $\myt P_{n;i_1,\dots,i_k}(z)$
of degree at most $(m+1-k)n$ satisfying~\eqref{khp_n} for $p=0$. Indeed, it is easily seen that in this case
conditions \eqref{khp_n}, as well as conditions \eqref{khp_i}, form a~system of $(n(m+1)+1){m\choose k}$ linear homogeneous equations for $(n(m+1-k)+1){m+1\choose k}=n(m+1){m\choose k}+{m+1\choose k}$ unknown coefficients of the polynomials $\myt P_{n;i_1,\dots,i_k}(z)$.
Moreover, in the case $\infty\notin\Sigma$ all the germs $A_{i_1,\dots,i_k}(\myt\zz^{(l_1\dots l_k)})$ for
$0\le i_1<\dots<i_k\le m$ and $0< l_1<\dots<l_k\le m$ can be replaced by a~tuple of arbitrary holomorphic germs, and still
polynomials  $\myt P_{n;i_1,\dots,i_k}(z)$ of degree $\le (m+1-k)n$ satisfying~\eqref{khp_n} for $p=0$
will exist. (However, they will have nothing to do with our original problem).

However, in the case   $\infty\in\Sigma$ the situation is completely different.
In this case, we cannot \textit{a~priori}  (without recourse to Theorem \ref{t3}) assert that   the
polynomials $\myt P_{n;i_1,\dots,i_k}(z)$  exist even if the right-hand side of \eqref{khp_n}
is replaced by $O(z^{-1})$ and the functions  $A_{i_1,\dots,i_k}(\myt\zz)$ are holomorphic at  points of the set  $\myt\pi^{-1}(\infty)$.
This is so, because the functions $A_{i_1,\dots,i_k}(\myt\zz)$, as considered as functions of  $z\in\myh\CC$,
may have branch points at points of the set $\myt\pi^{-1}(\infty)$.
\end{Remark}

The following  corollary shows that, for some `general position condition', system \eqref{khp_n} for $p=0$ gives
a~new definition of  the $k$th polynomials of the Hermite--Pad\'e $m$-sys\-tem, which is
equivalent to the original one. (All the notation used in this definition is inherited from the proof  of Theorem~\ref{t3}.)

\begin{Corollary}
Let $\infty\notin\Sigma$ and let  the functions $B_{j_1,\dots,j_k}(\myt\zz)$ and the functions from the rows of the matrix $\|B_{j_1,\dots,j_k}(\myt\zz^{(l_1 \dots l_k)})\|^{-1}$
have no poles on the set $\myt\pi^{-1}(\infty)$. Then  the $k$th polynomials
of the Hermite--Pad\'e $m$-system {\rm (}as defined in \eqref{khp_i}{\rm )} and the polynomials
of degree  at most $(m+1-k)n$ satisfying~\eqref{khp_n} for $p=0$ coincide.
\end{Corollary}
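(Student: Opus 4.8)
The assertion is that, under the stated general-position hypotheses, the polynomials defined by \eqref{khp_i} (the $k$th polynomials of the Hermite--Pad\'e $m$-system) coincide with those of degree at most $(m+1-k)n$ satisfying \eqref{khp_n} for $p=0$. First I would establish that the two linear systems \eqref{khp_i} and \eqref{khp_n} (the latter with $p=0$) have \emph{the same solution space}. The proof of Theorem~\ref{t3} already gives one inclusion: starting from a solution $P_{n;i_1,\dots,i_k}$ of \eqref{khp_i}, one forms $\myt R_n$ as in \eqref{tRn}, and the chain of equivalences \eqref{t3_1}--\eqref{t3_3} shows that \eqref{khp_i} is equivalent to \eqref{t3_3}. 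The point of the extra hypothesis is that, when $\infty\notin\Sigma$ and the functions $B_{j_1,\dots,j_k}$ and the rows of $\|B_{j_1,\dots,j_k}(\myt\zz^{(l_1 \dots l_k)})\|^{-1}$ have no poles over $\infty$, one can take $p=0$ in the last step of that proof: solving \eqref{t3_3} for $\myt R_n(\myt\zz^{(l_1\dots l_k)})$ using the inverse matrix introduces no pole at $\infty$, so \eqref{khp_n} holds with the sharp order $O(z^{-(kn+1)})$. Hence every solution of \eqref{khp_i} is a solution of \eqref{khp_n} with $p=0$ (of the same degree bound).

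For the reverse inclusion I would run the same computation backwards. Given polynomials $\myt P_{n;i_1,\dots,i_k}$ of degree at most $(m+1-k)n$ satisfying \eqref{khp_n} with $p=0$, I set $\myt R_n(\myt\zz):=\sum P_{n;i_1,\dots,i_k}(z)A_{i_1,\dots,i_k}(\myt\zz)\in\MM(\TRS_{[k]})$, which by Proposition~\ref{M_merom} is meromorphic on $\TRS_{[k]}$; relation \eqref{khp_n} says precisely that $\myt R_n(\myt\zz^{(l_1\dots l_k)})=O(z^{-(kn+1)})$ on every sheet with $l_1\ne 0$. Inverting \eqref{tRn} via \eqref{MA_2} recovers $P_{n;i_1,\dots,i_k}$ from $\myt R_n$ through \eqref{P_M}, and then the equivalences \eqref{t3_3}$\Leftrightarrow$\eqref{t3_2}$\Leftrightarrow$\eqref{t3_1}$\Leftrightarrow$\eqref{khp_i}, all reversible once $B_{j_1,\dots,j_k}$ is meromorphic and invertible over a neighbourhood of $\infty$ with no poles there, yield \eqref{khp_i}. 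Finally, the degree bound $\deg P_{n;i_1,\dots,i_k}\le (m+1-k)n$ is preserved in both directions by hypothesis, and the non-triviality requirement (at least one polynomial $\not\equiv 0$) transfers because the matrix $\|B_{j_1,\dots,j_k}(\myt\zz^{(l_1 \dots l_k)})\|$ is invertible, so $\myt R_n\equiv 0$ iff all $P_{n;i_1,\dots,i_k}\equiv 0$. Thus the two solution spaces agree, which is the claim.

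For completeness I would also note the dimension count already given in the preceding Remark: when $\infty\notin\Sigma$, conditions \eqref{khp_n} with $p=0$ form a system of $(n(m+1)+1){m\choose k}$ linear homogeneous equations in $(n(m+1-k)+1){m+1\choose k}$ unknowns, matching \eqref{khp_i}, so it is consistent that the two describe the same space rather than one being a proper restriction of the other.

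\textbf{Main obstacle.} The only delicate point is the behaviour at $\infty$: verifying that the passage between \eqref{khp_n} and \eqref{t3_3} (i.e.\ multiplying/solving by $\|B_{j_1,\dots,j_k}(\myt\zz^{(l_1 \dots l_k)})\|$ and its inverse) does not shift the order of vanishing at $\infty$. This is exactly what the hypothesis ``$B_{j_1,\dots,j_k}$ and the rows of $\|B_{j_1,\dots,j_k}(\myt\zz^{(l_1 \dots l_k)})\|^{-1}$ have no poles on $\myt\pi^{-1}(\infty)$'' is designed to rule out — it forces the constant $p$ in the proof of Theorem~\ref{t3} to be $0$, and symmetrically guarantees the reverse direction. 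Everything else is the reversible linear algebra of the proof of Theorem~\ref{t3}, requiring no new idea.
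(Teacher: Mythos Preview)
Your proposal is correct and follows essentially the same route as the paper: the forward implication is Theorem~\ref{t3} with $p=0$ (using the no-poles hypothesis on the rows of $\|B_{j_1,\dots,j_k}(\myt\zz^{(l_1 \dots l_k)})\|^{-1}$), and the reverse implication defines $\myt R'_n$ from the given polynomials, recovers them via \eqref{P_M}, and observes that the left-hand side of \eqref{khp_i} becomes $\sum_{0<l_1<\dots<l_k\le m} B_{j_1,\dots,j_k}(\myt\zz^{(l_1\dots l_k)})\myt R'_n(\myt\zz^{(l_1\dots l_k)})$, which is $O(z^{-(kn+1)})$ since the $B_{j_1,\dots,j_k}$ have no poles over~$\infty$. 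One small slip: the non-triviality transfer uses the mutually inverse matrices $\|M_{i_1,\dots,i_k}(\myt\zz^{(l_1\dots l_k)})\|$ and $\|A_{i_1,\dots,i_k}(\myt\zz^{(l_1\dots l_k)})\|^T$ from \eqref{MA_2}, not the matrix $\|B_{j_1,\dots,j_k}(\myt\zz^{(l_1\dots l_k)})\|$.
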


\begin{proof}
Let $P_{n;i_1,\dots,i_k}(z)$ be the $k$th polynomials of the Hermite--Pad\'e $m$-system, that is, the solution of~\eqref{khp_i}.
Since the functions contained in the rows of the matrix $\|B_{j_1,\dots,j_k}(\myt\zz^{(l_1 \dots l_k)})\|^{-1}$
have no poles on the set $\myt\pi^{-1}(\infty)$, we can put $p=0$ in the proof of Theorem~\ref{t3}.

Conversely, let $\myt P_{n;i_1,\dots,i_k}(z)$ be polynomials  of degree at most  $(m+1-k)n$ satisfying~\eqref{khp_n} for $p=0$.
Consider on $\TRS_{[k]}$ the meromorphic  function
\begin{equation}
\label{tRn'}
\myt R'_n(\myt\zz):=\sum_{0\le i_1<\dots<i_k\le m}\myt P_{n;i_1,\dots,i_k}(z) A_{i_1,\dots,i_k}(\myt\zz).
\end{equation}
Working in the same way as in the proof of Theorem \ref{t3}, we express $\myt P_{n;i_1,\dots,i_k}(z)$ from \eqref{tRn'}:
\begin{equation}
\myt P_{n;i_1,\dots,i_k}(z)=\sum_{0\le l_1<\dots<l_k\le m} M_{i_1,\dots,i_k}(\myt\zz^{(l_1\dots l_k)})\myt R'_n(\myt\zz^{(l_1\dots l_k)}).
\end{equation}
Substituting these expressions for  $\myt P_{n;i_1,\dots,i_k}(z)$ in the left-hand side of \eqref{khp_i},
carrying  out the same transformations as in the proof
of Theorem~\ref{t3} in \eqref{t3_1}--\eqref{t3_3}, and using \eqref{BB}, we find that
\begin{equation}
\label{P_P}
\begin{aligned}
&\myt P_{n;j_1\dots,j_k}(z) +
\sum_{s=1}^k (-1)^{s}\myt P_{n;0,j_1,\dots,j_{s-1}, j_{s+1},\dots,j_k}(z)f_{j_s,\infty}(z) \\
&=\sum_{0< l_1<\dots<l_k\le m}B_{j_1,\dots,j_k}(\myt\zz^{(l_1 \dots l_k)})\myt R'_n(\myt\zz^{(l_1\dots l_k)}).
\end{aligned}
\end{equation}
Note that by the assumption the functions  $B_{j_1,\dots,j_k}(\myt\zz)$ have no poles at the points of the set $\myt\pi^{-1}(\infty)$.
Besides, condition~\eqref{khp_n} for $p=0$ is equivalent to the relation
$\myt R'_n(\myt\zz^{(l_1 \dots l_m)})=O(z^{-(nk+1)})$ as $z\to \infty$ for all $0< l_1<\dots<l_k\le m$. Hence from \eqref{P_P} we have
\begin{equation}
\myt P_{n;j_1\dots,j_k}(z) +
\sum_{s=1}^k (-1)^{s}\myt P_{n;0,j_1,\dots,j_{s-1}, j_{s+1},\dots,j_k}(z)f_{j_s,\infty}(z)=O\left(\frac{1}{z^{nk+1}}\right)
\end{equation}
as $z\to\infty$, which is condition~\eqref{khp_i}.
\end{proof}

\begin{Proposition}
\label{M_ne_0}
If the surface $\TRS_{[k]}$ is connected, then, for any index set $0\le j_1<\dots<j_k\le m$, the meromorphic function
$M_{j_1,\dots,j_k}(\myt\zz)$ on this surface is not identically $0$ and the function $M_{j_1,\dots,j_k}(z)$
does not identically vanish in any domain in $\myh\CC\setminus F$.
\end{Proposition}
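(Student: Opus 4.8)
The plan is to reduce both assertions of the proposition to the single fact, recorded above, that $\det A\not\equiv 0$, using the connectedness of $\TRS_{[k]}$ only to propagate a vanishing statement from one sheet to the whole surface. The first step is to note that the two conclusions are linked. Indeed, if $M_{j_1,\dots,j_k}(z)$ vanished identically on some nonempty domain $D\subset\myh\CC\setminus F$, then, lifting $D$ into the sheet $\TRS_{[k]}^{(0\,1\,\dots\,k-1)}$ --- which, by \S\,\ref{s4}, is defined over $\myh\CC\setminus F_k\supset D$ --- relation~\eqref{MM} would give $M_{j_1,\dots,j_k}(\myt\zz)=M_{j_1,\dots,j_k}(z)/M_w^{0,\dots,k-1}(z)\equiv 0$ on the corresponding nonempty open subset of $\TRS_{[k]}$; since $M_{j_1,\dots,j_k}(\myt\zz)\in\MM(\TRS_{[k]})$ by Proposition~\ref{M_merom} and $\TRS_{[k]}$ is connected, the identity theorem for meromorphic functions forces $M_{j_1,\dots,j_k}(\myt\zz)\equiv 0$ on all of $\TRS_{[k]}$. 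Hence it suffices to prove that $M_{j_1,\dots,j_k}(\myt\zz)$ is not identically zero on $\TRS_{[k]}$.

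For this I would argue by contradiction. If $M_{j_1,\dots,j_k}(\myt\zz)\equiv 0$, then evaluating it on each sheet $\TRS_{[k]}^{(l_1\dots l_k)}$ via~\eqref{MM} and using $M_w^{l_1,\dots,l_k}\not\equiv 0$ gives $M_{j_1,\dots,j_k}^{l_1,\dots,l_k}(z)\equiv 0$ for every index set $0\le l_1<\dots<l_k\le m$. But the diagonal entry of the identity~\eqref{MA_1} (equivalently, the Laplace expansion of $\det A$ along the fixed columns $j_1,\dots,j_k$) reads $\sum_{0\le l_1<\dots<l_k\le m}M_{j_1,\dots,j_k}^{l_1,\dots,l_k}(z)\,A_{j_1,\dots,j_k}^{l_1,\dots,l_k}(z)=\det A(z)$, so the vanishing of all these minors would force $\det A\equiv 0$, contradicting the linear independence of $1,f_1,\dots,f_m$ over~$\CC(z)$. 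This establishes both parts of the proposition.

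The only points requiring care, rather than genuine obstacles, are the bookkeeping in the reduction step: that the sheet $\TRS_{[k]}^{(0\,1\,\dots\,k-1)}$ really does cover $D$, so that the identity theorem on the connected surface $\TRS_{[k]}$ applies, and that $M_w^{0,\dots,k-1}=\prod_{0\le i<j\le k-1}\bigl(w(\zz^{(j)})-w(\zz^{(i)})\bigr)$, being a Vandermonde determinant in pairwise distinct branches of~$w$, is a nonzero meromorphic function; both facts are already implicit in \S\,\ref{s4}. Everything else reduces to the standard cofactor/Laplace identity together with $\det A\not\equiv 0$.
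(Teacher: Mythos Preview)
Your proof is correct and follows the same overall architecture as the paper's: argue by contradiction, use connectedness of $\TRS_{[k]}$ to propagate the vanishing of $M_{j_1,\dots,j_k}(\myt\zz)$ to every sheet, deduce $M_{j_1,\dots,j_k}^{l_1,\dots,l_k}(z)\equiv 0$ for all row sets $(l_1,\dots,l_k)$, and conclude $\det A\equiv 0$, contradicting the linear independence of $1,f_1,\dots,f_m$ over~$\CC(z)$. The reduction of the second assertion to the first is also the same as the paper's, just presented in the reverse order.

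The one genuine difference is the algebraic identity you invoke at the end. The paper observes that the full matrix $\|M_{j_1,\dots,j_k}(\myt\zz^{(l_1\dots l_k)})\|$ has a zero row and then applies the Sylvester--Franke (compound-matrix) formula
\[
\det\bigl\|M_{j_1,\dots,j_k}^{l_1,\dots,l_k}(z)\bigr\|=(\det A)^{\binom{m}{k-1}}
\]
to reach $\det A\equiv 0$. You instead use the Laplace expansion of $\det A$ along the fixed column set $j_1,\dots,j_k$, which is more elementary and does the job just as well. The paper's choice fits naturally with identity~\eqref{MA_1}, which it has already set up; your choice avoids the compound-matrix machinery entirely. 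Either route is fine.
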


\begin{proof}
Assume that, for some index set $0\le j_1<\dots<j_k\le m$, we have $M_{j_1,\dots,j_k}(\myt\zz)\equiv 0$
in some domain on~ $\TRS_{[k]}$. Since $\TRS_{[k]}$ is connected, this means that $M_{j_1,\dots,j_k}(\myt\zz)\equiv 0$ on the whole $\TRS_{[k]}$. Therefore,  $\det\|M_{j_1,\dots,j_k}(\myt\zz^{(l_1 \dots l_k)})\|\equiv 0$ for $z\in\myh\CC\setminus F$.
On the other hand, since $\|M_{j_1,\dots,j_k}^{l_1, \dots, l_k}(z)\|$ is the matrix composed of all minors of~$A$ of size  $k\times k$, we have
\begin{equation}
\det\|M_{j_1,\dots,j_k}(\myt\zz^{(l_1 \dots l_k)})\|=
\det\left\|\frac{M_{j_1,\dots,j_k}^{l_1, \dots, l_k}(z)}{M_{w}^{l_1, \dots, l_k}(z)}\right\|=
\frac{(\det A(z))^{m\choose k-1}}{\prod\limits_{0\le l_1<\dots<l_k\le m}M_{w}^{l_1, \dots, l_k}(z)}.
\end{equation}
Therefore,  $\det A\equiv 0$ on $\myh\CC\setminus F$. This contradicts the linear independence over~$\CC(z)$
of the original functions $1, f_1,\dots, f_m$.

The second assertion of the proposition is clear from the fact that, for $z\in\myh\CC\setminus F$,
\begin{equation}
M_{j_1,\dots,j_k}(z)=M_{j_1,\dots,j_k}(\myt\zz^{(01 \dots k-1)}){M_{w}^{0,1, \dots, k-1}(z)}.
\end{equation}
\end{proof}

\section{Proofs of Theorems~\ref{theorem1} and \ref{theorem2}}
\label{s5}

As pointed out above, our proofs of Theorems~\ref{theorem1} and \ref{theorem2} are close to the proofs of the analogous theorems
for the Hermite--Pad\'e polynomials  of type~I from~\cite{ChKoPaSu}.
Roughly speaking, we will replace the function $u(\zz)$, which defines the Nuttall partition  of the  surface $\RS$,
by the function $-\myt u(\myt\zz)$ on~$\TRS_{[k]}$, which we will now construct from~$u$,
and the remainder function  for Hermite--Pad\'e polynomials of type~I by $\myt R_n(\myt\zz)$, which is defined by~\eqref{tRn}.

\subsection{Auxiliary results}
\label{s5.1}

So, for $\myt\zz\in\TRS_{[k]}\setminus\myt\pi^{-1}(F)$, we define the function
\begin{equation}
\label{uu}
\myt u(\myt\zz^{(l_1 \dots l_k)}):=u_{l_1}(z)+\dots +u_{l_k}(z),
\end{equation}
where $u_j(z)$ are introduced in \eqref{inequ_u}.

\begin{Proposition}
The function $\myt u(\myt\zz)$ extends to a~harmonic function on  $\TRS_{[k]}\setminus\myt\pi^{-1}(\infty)$
with the following logarithmic singularities at the points $\myt\pi^{-1}(\infty)$:
\begin{equation}
\label{uu_to}
\begin{aligned}
\myt u(\zz^{(0l_2\dots l_k)})&=-(m+1-k)\log{|z|}+O(1),\quad z\to\infty,\\
\myt u(\zz^{l_1 l_2\dots l_k})&=k\log{|z|}+O(1), \quad z\to\infty, \quad l_1\ne 0.
\end{aligned}
\end{equation}
\end{Proposition}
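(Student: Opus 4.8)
The plan is to work locally over $\myh\CC\setminus\Sigma$ first, then handle the branch points in $\Sigma$, then the points over $\infty$. Away from $\Sigma$, the surface $\TRS_{[k]}$ is an unbranched covering of $\myh\CC\setminus\Sigma$, and locally (say over a small disk $D\subset\myh\CC\setminus(\Sigma\cup F)$) a point $\myt\zz\in\TRS_{[k]}$ is given by an unordered $k$-tuple $\{w^{z}_{l_1},\dots,w^{z}_{l_k}\}$ of germs of $w=\pi^{-1}$. Each germ $w^{z}_{l}$ is a holomorphic section of $\pi$ over $D$, i.e. it picks out one sheet of $\RS$; correspondingly $u$ restricted to that section is a harmonic function on $D$, and it is one of the values $u_0(z),\dots,u_m(z)$. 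Thus $\myt u(\myt\zz)=\sum_{l}u\circ(\text{that section})$ is a sum of $k$ harmonic functions, hence harmonic on $D$, and formula \eqref{uu} just records that over $\myh\CC\setminus F$ the sheet $\TRS_{[k]}^{(l_1\dots l_k)}$ selects the sections realizing $u_{l_1},\dots,u_{l_k}$. The first point is therefore that \eqref{uu} defines a harmonic function on $\TRS_{[k]}\setminus\myt\pi^{-1}(\Sigma\cup\{\infty\})$ that does not see the arcs of $\myt\pi^{-1}(F)$: crossing such an arc only permutes among themselves the sections in the chosen $k$-tuple (or swaps one in for one out while keeping the underlying harmonic value, since on $F$ two of the $u_j$ coincide), so $\sum u_{l_s}(z)$ extends harmonically across $\myt\pi^{-1}(F\setminus\Sigma)$ — this is the analogue, for $\TRS_{[k]}$, of the fact that $u$ itself is well-defined on all of $\RS$ and not just off $F$.

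Next I would extend across the branch locus $\myt\pi^{-1}(\Sigma)$. Let $\myt\bb\in\TRS_{[k]}$ lie over a critical value $b\in\Sigma$, $b\ne\infty$. In a punctured neighbourhood of $\myt\bb$, $\myt u$ is a bounded harmonic function: indeed $u$ is harmonic on all of $\RS$ near $\pi^{-1}(b)$ (no singularities there since $b\ne\infty$), in particular $u$ is bounded on $\pi^{-1}(\text{small disk around }b)$, so each of the $k$ summands in \eqref{uu} is bounded, hence $\myt u$ is bounded near $\myt\bb$. A bounded harmonic function on a punctured disk (in the local coordinate on $\TRS_{[k]}$ at $\myt\bb$) extends harmonically across the puncture by the removable singularity theorem for harmonic functions. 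So $\myt u$ is harmonic on all of $\TRS_{[k]}\setminus\myt\pi^{-1}(\infty)$.

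Finally the singularities over $\infty$. Fix $\myt\infty\in\myt\pi^{-1}(\infty)$, say corresponding to an unordered $k$-tuple of germs of $w$ at $\infty$; let $N$ be the ramification index of $\tpi$ at $\myt\infty$, and $\eta$ a local coordinate at $\myt\infty$ with $z=\eta^{-N}$ (up to a unit), so $\log|z|=-N\log|\eta|+O(1)$. Near $\myt\infty$ the function $\myt u$ is a sum of $k$ of the local branches of $u$ over $\infty$; by \eqref{u_to} (and Remark~\ref{u_branch} for the ramified case) each such branch contributes either $+\log|z|+O(1)$ — if it is the branch tending to a point of $\pi^{-1}(\infty)\setminus\pmb\infty^{(0)}$ — or $-m\log|z|+O(1)$, if it is the branch tending to $\pmb\infty^{(0)}$. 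Now if $\myt\infty\in\OO_\infty^0$, i.e. the $k$-tuple includes the germ at $\pmb\infty^{(0)}$ (the ``$0l_2\dots l_k$'' case), then exactly one summand is $-m\log|z|$ and the remaining $k-1$ are $+\log|z|$, giving $\myt u=(-m+(k-1))\log|z|+O(1)=-(m+1-k)\log|z|+O(1)$; if $\myt\infty\in\OO_\infty^1$, i.e. the germ at $\pmb\infty^{(0)}$ is not among the $k$ chosen (the ``$l_1\dots l_k$'' case with $l_1\ne0$), then all $k$ summands are $+\log|z|$, giving $\myt u=k\log|z|+O(1)$. (One should note that over $O_\infty$ the two kinds of points do not get mixed — this is exactly the decomposition $\TRS_{[k]}\cap\myt\pi^{-1}(O_\infty)=\OO_\infty^0\sqcup\OO_\infty^1$ established in the proof of Theorem~\ref{t3} — so the ``type'' of $\myt\infty$ is well-defined.) This is precisely \eqref{uu_to}.

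I expect the only genuinely delicate point to be the first one: verifying carefully that \eqref{uu} is consistent across the arcs $\myt\pi^{-1}(F\setminus\Sigma)$, i.e. that the assignment ``$k$-tuple of germs $\mapsto$ sum of the corresponding values $u_{l_s}$'' is globally single-valued on $\TRS_{[k]}\setminus\myt\pi^{-1}(\Sigma\cup\{\infty\})$ and in fact harmonic there, not merely off $\myt\pi^{-1}(F)$. The cleanest way to argue this is to observe that $u$ is already a globally well-defined harmonic function on $\RS\setminus\pi^{-1}(\infty)$, and that the map sending a point $\myt\zz=(z,\{w^z_{l_1},\dots,w^z_{l_k}\})$ of $\TRS_{[k]}$ to the unordered set $\{\zz^{(l_1)},\dots,\zz^{(l_k)}\}\subset\RS$ of corresponding points is a well-defined holomorphic map; then $\myt u(\myt\zz):=\sum_{s=1}^k u(\zz^{(l_s)})$ is manifestly well-defined and harmonic on $\TRS_{[k]}\setminus\myt\pi^{-1}(\infty)$ wherever $u$ is, and \eqref{uu} is just its expression in terms of the ordering \eqref{inequ_u} valid over $\myh\CC\setminus F$. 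After that, steps two and three are routine: removable-singularity for harmonic functions at $\myt\pi^{-1}(\Sigma\setminus\{\infty\})$, and a direct count of the logarithmic contributions at $\myt\pi^{-1}(\infty)$ using \eqref{u_to} and the $\OO_\infty^0/\OO_\infty^1$ dichotomy.
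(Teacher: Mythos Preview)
Your proposal is correct and follows essentially the same route as the paper: verify that the definition \eqref{uu} glues harmonically across the arcs of $\myt\pi^{-1}(F)$, then read off the logarithmic asymptotics at $\myt\pi^{-1}(\infty)$ directly from \eqref{u_to}. Your treatment is in fact slightly more explicit than the paper's in two places: you separate out the extension across $\myt\pi^{-1}(\Sigma\setminus\{\infty\})$ via the removable singularity theorem (the paper leaves this implicit, relying on $\Sigma\subset F$ and the piecewise-analytic structure of $F$), and your ``cleanest'' reformulation --- defining $\myt u(\myt\zz)=\sum_s u(\zz^{(l_s)})$ as a sum of values of the globally harmonic $u$ over the $k$ points of $\RS$ naturally attached to $\myt\zz$ --- makes the well-definedness and harmonicity transparent in one stroke, whereas the paper argues the gluing arc by arc using local trivializations $u_{(s)}$ of $u$ over a neighbourhood of the arc.
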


\begin{proof}
As pointed out above, $F$ is a~one-dimensional piecewise analytic set without isolated points (that is, in fact, $F$
is the closure of a~finite number of analytic arcs), and hence the set $\myt\pi^{-1}(F)$ has the same properties.
So, it suffices to show that the function $\myt u$ is glued into a~single harmonic function on the arcs from $\myt\pi^{-1}(F)$.
Assume that when crossing such an  (open) arc $\gamma\in\myt\pi^{-1}(F)$ we go from the sheet  $\TRS_{[k]}^{(l_1 \dots l_k)}$ to the sheet
 $\TRS_{[k]}^{(i_1 \dots i_k)}$. Consider a~small neighborhood~$O$ of the arc  $\myt\pi(\gamma)$ such that  $O\cap F = \myt\pi(\gamma)$. Over~$O$ the Riemann
 surface $\RS$ splits into the union of   $m+1$ disconnected neighbourhoods $\OO_s$, $s=0,\dots,m$. We set $u_{(s)}(z):=u|_{\OO_s}(\zz)$.
 Then, from each side of $\myt\pi(\gamma)$, the function~$u_j$ coincides with some of the functions $u_{(s)}$. Besides, if the
 set $\OO_s\cap\pi^{-1}(\myt\pi(\gamma))$ locally separates some sheets $\RS^{(l)}$ and $\RS^{(i)}$ (the case $l=i$
 is possible), then the functions $u_l$ and $u_i$ are glued on  $\myt\pi(\gamma)$ into a~single harmonic function $u_{(s)}$.
 Hence, by construction of the  surfaces $\TRS_{[k]}$, when we go through the arc  $\myt\pi(\gamma)$, the functions  $u_{l_1}(z)+\dots +u_{l_k}(z)$ and $u_{i_1}(z)+\dots +u_{i_k}(z)$ are glued into a~single harmonic function. This means that on~$\gamma$ the function~$\myt u$
 is glued into a~harmonic function.

The behaviour of $\myt u$ at points of the set $\myt\pi^{-1}(\infty)$ \eqref{uu_to}
directly follows from the asymptotic behaviour of~$u$ \eqref{u_to} and the definition of the  surface $\TRS_{[k]}$.
\end{proof}

Note that  $\myt u$, as well as the function $u$~\eqref{u_to} that defines it, is defined up to an additive constant, which will be chosen later.

As noted above, the function $\myt R_n$, defined by equality~\eqref{tRn}, is a~meromorphic function on $\TRS_{[k]}$.
By the assumption, the surface $\TRS_{[k]}$ is connected, and hence either $\myt R_n\equiv 0$
on the whole $\TRS_{[k]}$, or $\myt R_n$ vanishes at a~finite number of  points. Let us show that  the first case is
impossible. Assume the converse, i. e. $\myt R_n\equiv 0$. Then, since  $\det\|A_{j_1,\dots,j_k}(\myt\zz^{(l_1 \dots l_k)})\|\not\equiv0$ (see \eqref{MA_2}),
from \eqref{tRn_2} it follows that all $P_{n;i_1,\dots,i_k}\equiv 0$. This contradicts their definition~\eqref{khp_i}. So,
$\myt R_n\in\MM(\TRS_{[k]})$ and $\myt R_n\not\equiv 0$. Let us write down its divisor $(\myt R_n)$.

We denote by $p_1$ the maximum of the orders of the poles of all functions $A_{j_1,\dots,j_k}$~\eqref{AA} at all points of the set  $\myt\pi^{-1}(\infty)$.
We have $\pmb\infty^{(0)}\not\in\Sigma$, and hence, moving on the  surface $\TRS_{[k]}$ near  $\myt\pi^{-1}(\infty)$,
we can not go from the sheets $\TRS_{[k]}^{(0 s_2\dots s_k)}$ to the sheets $\TRS_{[k]}^{(l_1\dots l_k)}$, where $l_1 > 0$.  Assume
first that  $\infty\not\in\Sigma$. By Theorem~\ref{t3}, in the set $\myt\pi^{-1}(\infty)$
there are precisely  $m\choose k$ points, near which $\myt R_n$ behaves as  $O(z^{-(nk+1-p)})$. (Such points
lie either on the sheets $\TRS_{[k]}^{(l_1\dots l_k)}$, where $l_1 > 0$, themselves or on their
boundaries in the case $\infty\in F$.) Therefore,
at these points  $\myt R_n$ has a~zero of order $\ge nk+1-p$.
By definition, $\deg P_{n;j_1,\dots,j_m}\le (m+1-k)n$, and hence at other ${m\choose k-1}$ points
of the set $\myt\pi^{-1}(\infty)$ the function $\myt R_n$ has a~pole of order  $\le (m+1-k)n+p_1$.
Assume now that the case $\infty\in\Sigma$ is allowed. We denote by $\myt{\pmb\infty}_1, \dots, \myt{\pmb\infty}_{M_1}$
the points of the set $\myt\pi^{-1}(\infty)$ that lie either on sheets $\TRS_{[k]}^{(l_1\dots l_k)}$, where $l_1 > 0$,
or on their boundaries. We also denote by $\myt{\pmb\infty}_{M_1+1}, \dots, \myt{\pmb\infty}_{M_2}$ other points  from  $\myt\pi^{-1}(\infty)$. Let $d_j$ be the order of $\myt{\pmb\infty}_j$ as a~critical point
of~$\myt\pi$ ($d_j=1$ if $\myt{\pmb\infty}_j$ is not a~critical point). By
Theorem~\ref{t3},  the function  $\myt R_n$
has a~zero of order $\ge d_j(nk+1-p)$ at~$\myt{\pmb\infty}_j$ for $j=1,\dots M_1$, and  
 $\sum_{j=1}^{M_1}d_j = {m\choose k}$. We have  $\deg P_{n;j_1,\dots,j_m}\le (m+1-k)n$,
and hence the function  $\myt R_n$ has  a~pole of order  $\le d_j(m+1-k)n+p_1$  at~$\myt{\pmb\infty}_j$ for  $j=M_1+1,\dots M_2$, and  
$\sum_{j=M_1+1}^{M_2}d_j = {m\choose k-1}$.

Let $\{\myt{\pmb\alpha}_j(n)\}_{j=1}^{S_1(n)}$ be the zeros of $\myt R_n$ over $\CC$ (taken with
multiplicities) and let $\{\myt{\pmb\beta}_j(n)\}_{j=1}^{S_2(n)}$ be its poles (taken with
multiplicities). Then
\begin{equation}
\label{div_tRn_0}
\begin{aligned}
&(\myt R_n) =
\sum_{j=1}^{M_1}(d_j(nk+1-p)+r_j(n))\myt{\pmb\infty}_j  \\
&-\sum_{j=M_1+1}^{M_2}(d_j(m+1-k)n+p_1-r_j(n))\myt{\pmb\infty}_j
+\sum_{j=1}^{S_1(n)}\myt{\pmb\alpha}_j(n)-\sum_{j=1}^{S_2(n)}\myt{\pmb\beta}_j(n),
\end{aligned}
\end{equation}
where $r_j(n)\in\NN\cup\{0\}$.
We rewrite~\eqref{div_tRn_0} in the form
\begin{equation}
\label{div_tRn_tTn}
(\myt R_n)=n\left(k\sum_{j=1}^{M_1}d_j\myt{\pmb\infty}_j - (m+1-k)\sum_{j=M_1+1}^{M_2}d_j\myt{\pmb\infty}_j\right) +(\myt T_n),
\end{equation}
where
\begin{equation}
\label{div_tTn}
\begin{aligned}
(\myt T_n):=&(1-p)\sum_{j=1}^{M_1}d_j\myt{\pmb\infty}_j -p_1\sum_{j=M_1+1}^{M_2}\myt{\pmb\infty}_j + \sum_{j=1}^{M_2}r_j(n)\myt{\pmb\infty}_j \\
&+\sum_{j=1}^{S_1(n)}\myt{\pmb\alpha}_j(n)-\sum_{j=1}^{S_2(n)}\myt{\pmb\beta}_j(n).
\end{aligned}
\end{equation}
Since  $\sum_{j=1}^{M_1}d_j = {m\choose k}$, and $\sum_{j=M_1+1}^{M_2}d_j = {m\choose k-1}$, the  identity $\deg(\myt R_n)=0$
is equivalent to $\deg(\myt T_n)=0$. Let us look at the negative points of the divisor of $(\myt T_n)$. First of all, these are the
points $\myt{\pmb\beta}_j(n)$.
Over~$\CC$ the function $\myt R_n$ may have poles only at points that are a pole of some of the functions $A_{j_1,\dots,j_k}$
and the order of this pole is not greater than the maximum of the order of the poles of all
functions $A_{j_1,\dots,j_k}$ at this point. Hence the set $\{\myt{\pmb\beta}_j(n)\}_{j=1}^{S_2(n)}$ is a~subset of the set of the poles
of all functions $A_{j_1,\dots,j_k}$ over~$\CC$ (with multiplicities).
All other negative points lie over~$\infty$ and do not depend on~$n$.
Therefore, adding, if necessary, the missing points to the divisor of~$(\myt T_n)$ with different signs, we
can assume that the negative points of $(\myt T_n)$ are precisely the poles of all functions $A_{j_1,\dots,j_k}$ (with multiplicities)
and the points $\myt{\pmb\infty}_j$ with the multiplicities given by \eqref{div_tTn}. So, now the negative points of~$(\myt T_n)$ do not depend on~$n$. We denote these points by $\myt\bb_j$, $j=1,\dots S$ (the sign is not taken into account), where $S$~is their total
number (taken with multiplicities). We have $\deg(\myt T_n)=0$, and hence now  $(\myt T_n)$ has precisely~ $S$ unknown zeros,
which we denote by $\myt\aa_j(n)$, $j=1,\dots, S$. As a~result, we have
\begin{equation}
(\myt T_n) =
\sum_{j=1}^{S}\myt\aa_j(n)-\sum_{j=1}^{S}\myt\bb_j.
\end{equation}
Substituting this expression in \eqref{div_tRn_tTn}, we get
\begin{equation}
\label{div_tRn}
(\myt R_n) =
n\left(k\sum_{j=1}^{M_1}d_j\myt{\pmb\infty}_j - (m+1-k)\sum_{j=M_1+1}^{M_2}d_j\myt{\pmb\infty}_j\right)+
\sum_{j=1}^{S}\myt\aa_j(n)-\sum_{j=1}^{S}\myt\bb_j.
\end{equation}

For any two distinct points $\myt\qq, \myt\pp\in\TRS_{[k]}$, by $g(\myt\qq, \myt\pp; \zz)$ we denote the standard bipolar Green function, that is,
a~harmonic function on $\TRS_{[k]}\setminus\{\myt\qq, \myt\pp\}$ with logarithmic singularities at~$\myt\qq$ and $\myt\pp$ that in the local coordinates~$\zeta$ have the following form
\begin{equation}
\label{g_as}
\begin{aligned}
g(\myt\qq,\myt\pp;\myt\zz)&=\log{|\zeta(\myt\zz)-\zeta(\myt\qq)|}+O(1),\quad \zz\to\myt\qq, \\
g(\myt\qq,\myt\pp;\myt\zz)&=-\log|\zeta(\myt\zz)-\zeta(\myt\pp)|+O(1),\quad \myt\zz\to\myt\pp.
\end{aligned}
\end{equation}
The existence of bipolar Green functions on any compact Riemann surface (which is well known), and
all their properties we need are proved in~\cite{ChKoPaSu} (see also~\cite{Ch1}). The functions $g(\myt\qq,\myt\pp;\myt\zz)$ are still defined up to an additive constant. We will choose their normalization later.

From \eqref{div_tRn},~\eqref{uu_to} and since $\TRS_{[k]}$ is connected, it clearly follows that
\begin{equation}
\label{log_tRn}
\log |\myt R_n(\myt\zz)|=-n\myt u(\myt\zz) + \sum_{j=1}^S g(\myt\aa_j(n), \myt\bb_j;\myt\zz) + c_n,
\end{equation}
where $c_n$ is a real constant.
Setting
\begin{equation}
\label{psi_n}
\psi_n(\myt\zz):=\exp\biggl\{\sum_{j=1}^S g(\myt\aa_j(n), \myt\bb_j;\myt\zz)
\biggr\},
\end{equation}
we have
\begin{equation}
\label{|tRn|}
|\myt R_n(\myt\zz)|=C_n e^{-n\myt u(\myt\zz)}\psi_n(\myt\zz),
\end{equation}
where $C_n=e^{c_n}>0$ is a~constant.

Let us now fix normalizations of the functions $g, \myt u, \myt R_n$ and the $k$th polynomials of the Hermite--Pad\'e $m$-system $P_{n;i_1,\dots,i_k}$.
The functions $g$ and $\myt u$ are defined up to an additive constant, and hence the choice of their
normalizations is equivalent to the choice of a~multiplicative constant~$C_n$ in~\eqref{|tRn|}. Since~$\myt R_n$
is expressed in terms of $P_{n;i_1,\dots,i_k}$ by~\eqref{tRn}, it follows that with the help of multiplication of all $P_{n;i_1,\dots,i_k}$ by the same
constant we can get any suitable normalization of~$\myt R_n$. In what follows, we will assume that all the functions~$g$ and~$\myt u$
are spherically normalized on the sheet $\TRS_{[k]}^{(01\dots k-1)}$, that is,
\begin{equation}
\label{norm_g}
\int _{\myh\CC\setminus F_k}g(\myt\qq, \myt\pp; \myt\zz^{(01\dots k-1)})d\sigma(z)=0, \quad
\int _{\myh\CC\setminus F_k}\myt u(\myt\zz^{(01\dots k-1)})d\sigma(z)=0,
\end{equation}
where $d\sigma:= \frac{i}{2\pi}\frac{dz\wedge d\myo z}{(1+|z|^2)^2}$ is the normalized area form of the spherical metric on~$\myh\CC$.
To fix the normalization of~$|\myt R_n|$, we set $c_n=0$ in \eqref{log_tRn}, which is equivalent to setting $C_n=1$ in \eqref{|tRn|}, that is,
\begin{equation}
\label{|tRn|_2}
|\myt R_n(\myt\zz)|=e^{-n\myt u(\myt\zz)}\psi_n(\myt\zz).
\end{equation}
So, $\log|\myt R_n|$ is also spherically normalized on the sheet $\TRS_{[k]}^{(01\dots k-1)}$.
Thus, we have fixed the normalization of the $k$th polynomials of the Hermite--Pad\'e $m$-system.
\textit{In what follows, $P_{n;i_1,\dots,i_k}$ will denote exactly those $k$th polynomials
of the Hermite--\allowbreak Pad\'e $m$-system for that the function $\log|\myt R_n|$ is spherically normalized on the sheet $\TRS_{[k]}^{(01\dots k-1)}$.}

Note that, in general, the functions $\log|P_{n;i_1,\dots,i_k}|$ are not spherically normalized. Therefore  along with $P_{n;i_1,\dots,i_k}$
we will consider the polynomials $P_{n;i_1,\dots,i_k}^*=c_{n;i_1,\dots,i_k}P_{n;i_1,\dots,i_k}$ such that
\begin{equation}
\label{P^*}
\int _{\myh\CC}\log|P_{n;i_1,\dots,i_k}^*|d\sigma=0,
\end{equation}
where $c_{n; i_1,\dots,i_k}>0$ are constants.
We emphasize that, in general, $P_{n;i_1,\dots,i_k}^*$ do not satisfy~\eqref{khp_i}.

We fix some conformal metric $\rho$ on $\TRS_{[k]}$. We denote by $\dist_\rho(\cdot,\cdot)$ the distance with respect to this metric, and by~$\sigma_\rho$ the corresponding area form.
Since the spaces~$L^p$ corresponding to area forms of any two smooth positive Riemannian metrics on a~compact Riemann surface
coincide, we denote by $L^p(\TRS_{[k]})$ the space~$L^p$ corresponding to the area form~$\sigma_\rho$.
For  the Riemann sphere~$\myh\CC$ and its subsets we will consider the spaces $L^p$ that correspond to the normalized area form $d\sigma$
of the spherical metric~$\rhos$, which will be not specified explicitly.
By $\dist(\cdot,\cdot)$, we will denote the distance on~$\myh\CC$ with respect to the metric~$\rhos$.

Let us  recall the facts we need from the potential theory on a~compact Riemann surface $\SR$.
We will consider only the Riemann sphere~$\myh\CC$, that is,  we can assume that $\SR=\myh\CC$.
Let $\Lambda^j(\SR)$, $j=0,2,$ be the spaces of smooth $j$-forms on~$\SR$ with the topology of uniform convergence with all derivatives.
In particular, $\Lambda^0(\SR)=C^\infty(\SR)$ is the space of smooth functions on~$\SR$.
Let $\Lambda'^{j}(\SR):=(\Lambda^{2-j}(\SR))^*$ denote the dual space of $\Lambda^{2-j}(\SR)$, that is, (de Rham) currents   of degree $j$ with the  weak* topology on the dual space, see, for example, \cite{Chi06},
\cite{Rha56},~\cite{Chi15}.
In the general case, the operator $\ddc$ is defined on currents of degree~0 on~$\SR$ as
$\ddc:\Lambda'^{0}(\SR)\to \Lambda'^{2}(\SR)$ and acts by the rule  $\ddc T(\tau)=T(\ddc\tau)$, where
$T\in\Lambda'^{0}(\SR)$, and $\tau\in\Lambda^{0}(\SR)$ is an arbitrary test function. (As pointed out in \S\,\ref{s2},
on smooth functions~$\varphi$ in the local coordinate $\zeta = x +iy$ the operator~$\ddc$ acts as the Laplacian:
$\ddc\varphi = (\varphi_{xx}+\varphi_{yy}) \, dx\,dy=\Delta\varphi \,dx\,dy$.)
It is well known that the equation $\ddc T=\myt{T}$ on~$\SR$ is solvable for currents
  $\myt{T}\in\Lambda'^{2}(\SR) $ if and only if $\myt{T}(1)=0$, and if $\myt{T}(1)=0$, then $T$~is defined up to an additive constant
(because the solutions of the equation $\ddc T=0$ are harmonic functions on~$\SR$ by Weyl's lemma, and so, they are constants because $\SR$~is
compact). We will be interested in solutions of the equation $\ddc T=\myt{T}$ in the case when  $\myt T=\nu$ is
  a~neutral real-valued signed measure, that is, a~signed measure satisfying $\int_{\SR}\nu=0$.
  We denote by $\Meas_0(\SR)$ the space of all such signed measures. In this case, the current~$T$
is called the potential of the signed measure~$\nu$ and is denoted by $\myh\nu:= T=(\ddc)^{-1}\nu$ (the
potential $\myh\nu$ is defined up to an additive constant). The function~$\varphi$ on~$\SR$ is called
$\delta$-subharmonic if it can be locally represented as the difference of two subharmonic functions.
We denote by $\dsh(\SR)$ the space of all $\delta$-subharmonic functions on~$\SR$. It is well known  (see, for example,~\cite{Ch1})
that $\dsh(\SR)$ consists precisely of the potentials of all neutral signed measures on~$\SR$ and $\dsh(\SR)\in L^p(\SR)$ for any $p\in[1,\infty)$.
To get rid of the ambiguity of the operator $(\ddc)^{-1}$ on the space of neutral signed measures,
which is related to the possibility of adding a constant, one fixes a~continuous linear functional~$\phi$, say,
on the space $L^1(\SR)$ satisfying the condition $\phi(1)\ne 0$, and instead of the space~$\dsh(\SR)$ considers the space
\begin{equation}
\label{Pot_phi}
\Pot_\phi(\SR):=\{v\in\dsh(\SR):\phi(v)=0\}.
\end{equation}
On the space $\Pot_\phi(\SR)$, the operator $\ddc:\Pot_\phi(\SR)\to \Meas_0(\SR)$ is one-to-one.
For a~signed measure $\nu\in\Meas_0(\SR)$, we  denote by  $(\myh\nu)_\phi$ its potential $\myh\nu$ that lies in
the space $\Pot_\phi(\SR)$. Next, we will deal with the Riemann sphere~$\myh\CC$,
and as~$\phi$ we will use the functional defined by the area form~$d\sigma$ of the spherical metric~$\rhos$:
\begin{equation}
\label{ds(v)}
\phi(v)=d\sigma(v):=\int_{\myh\CC}v d\sigma,
\end{equation}
where $v\in\dsh(\myh\CC)\subset L^1(\myh\CC)$.

\subsection{Proof of Theorem~\ref{theorem1}}
\label{s5.2}

Theorem~\ref{theorem1} is the combination of Statements \ref{St_zero} and~\ref{St_lnP}, which we will prove in this section.

Recall that $S$ is the number of unknown  zeros of the divisor of $(\myt R_n)$ (see \eqref{div_tRn}).
We denote by $\alpha_{j_1,\dots,j_k}$ the number of zeros of the function  $M_{j_1,\dots,j_k}(\myt\zz)$~\eqref{MM} on the sheet $\TRS_{[k]}^{(01\dots k-1)}$, which is defined over $\myh \CC\setminus F_k$.
In what follows, the notation $M_{j_1,\dots,j_k}$ will mean the meromorphic function $M_{j_1,\dots,j_k}(\myt\zz)$  on~$\TRS_{[k]}$~\eqref{MM}.

\begin{Statement}
\label{St_zero}
Suppose that the surface $\TRS_{[k]}$, which is constructed from~$\pi$, is connected. Then,
for any neighbourhood~$V$ of the compact set $F_k$, there exists an $N=N(V)$ such that, for all $n>N$,
there are at most
$L_{j_1,\dots,j_k}:=S+\alpha_{j_1,\dots,j_k}$ zeros, taken with multiplicities, of the polynomial $P_{n;j_1,\dots,j_k}(z)$
outside the neighbourhood~$V$.
\end{Statement}

\begin{proof}
The functions  $M_{j_1,\dots,j_k}(\myt\zz)$~\eqref{MM}, which are
 meromorphic on $\TRS_{[k]}$, do not depend on~$n$,
 and hence, considering if necessary a~smaller neighborhood~$V$, we assume that on the set $\pi^{-1}(V\setminus F_k)$ these functions have neither zeros, nor poles.
 In the case $\infty\notin F_k$, we also require that $\infty\notin V$. We set
\begin{equation}
\label{delta}
\delta:=\frac{\dist(\partial V, F_k)}{2(2S+3)},
\end{equation}
where $\partial V$ is the boundary of~$V$.
Since  (see \cite{ChKoPaSu}, Attachment~1, Lemma~3) $F_k$ is a~piecewise analytic subset of
$\myh\CC$ without isolated points, for each~$n$ one  can find a~system of disjoint smooth contours $\Gamma_n$ bounding an open set $D_n$ such that $F_k\subset D_n\subset V$, and the following conditions hold true:
$\dist(\Gamma_n, F_k)\ge\delta$,
$\dist(\Gamma_n, \partial V)\ge\delta$,
$\dist(\Gamma_n, b_i)\ge\delta$,
$\dist(\Gamma_n, a_i(n))\ge\delta$, $i=1,\dots,S$,
where $b_i=\myt\pi(\myt\bb_i)$, $a_i(n)=\myt\pi(\myt\aa_i)$ are the projections of the poles and zeros of the remainder function $\myt R_n$ (see~\eqref{div_tRn}).

Let us get an upper estimate for the number of zeros of $P_{n;j_1,\dots,j_k}(z)$
in $\Omega_n:=\myh\CC\setminus\myo D_n$ (and hence, in $\myh\CC\setminus V$).
Since the functions $M_{j_1,\dots,j_k}(\myt\zz)$ and $\myt R_n(\myt\zz)$ are meromorphic on $\TRS_{[k]}$,   expression~\eqref{P_M}
extends to the whole $\myh \CC$ in the following way:
\begin{equation}
\label{P_M_1}
P_{n;j_1,\dots,j_k}(z)=\sum_{\myt\zz\in\myt\pi^{-1}(z)} M_{j_1,\dots,j_k}(\myt\zz)\myt R_n(\myt\zz).
\end{equation}
As pointed out in \S\,\ref{s4}, on the sheet $\TRS_{[k]}^{(01\dots k-1)}$ the
projection $\myt\pi:\TRS_{[k]}^{(01\dots k-1)}\to\myh\CC\setminus F_k$ is biholomorphic.
Hence, for $z\in \myh\CC\setminus F_k$, \eqref{P_M_1} is equivalent to
\begin{equation}
\label{P_Arg}
P_{n;j_1,\dots,j_k}(z)=M_{j_1,\dots,j_k}(\myt\zz^{(01\dots k-1)})\myt R_n(\myt\zz^{(01\dots k-1)})\left(1+
h_{n; j_1,\dots,j_k}(z)\right),
\end{equation}
where
\begin{equation}
\label{h_n}
h_{n; j_1,\dots,j_k}(z):=\sum_{\myt\zz\in\myt\pi^{-1}(z)\setminus \myt\zz^{(01\dots k-1)}}
\frac{M_{j_1,\dots,j_k}(\myt\zz)}{M_{j_1,\dots,j_k}(\myt\zz^{(01\dots k-1)})}\frac{\myt R_n(\myt\zz)}{\myt R_n(\myt\zz^{(01\dots k-1)})}.
\end{equation}

First of all, we show that $\lim\limits_{n\to\infty}\max\limits_{z\in\Gamma_n}|h_{n; j_1,\dots,j_k}(z)|= 0$.
By definitions of the function $\myt u$~\eqref{uu} and the functions $u_i$~\eqref{inequ_u},
for $\myt\zz\in\myt\pi^{-1}(z)\setminus \myt\zz^{(01\dots k-1)}$
we have  $\myt u(\myt \zz)-\myt u(\myt \zz^{(01\dots k-1)})\ge u_k(z)-u_{k-1}(z)$.
Therefore from \eqref{|tRn|_2} we conclude that, for $z\in \myh\CC\setminus F_k$,
\begin{equation}
\label{|h_n|}
\begin{aligned}
&|h_{n; j_1,\dots,j_k}(z)|\\
&\le\sum_{\myt\zz\in\myt\pi^{-1}(z)\setminus \myt\zz^{(01\dots k-1)}}  \left|\frac{M_{j_1,\dots,j_k}(\myt\zz)}{M_{j_1,\dots,j_k}(\myt\zz^{(01\dots k-1)})}\right|\frac{\psi_n(\myt\zz)}{\psi_n(\myt\zz^{(01\dots k-1)})}\cdot
e^{-n(u_k(z)-u_{k-1}(z))}.
\end{aligned}
\end{equation}
Let
\begin{equation}
K:=\{z\in V: \dist(z, F_k)\ge\delta/2, \dist(z, \partial V) \ge\delta/2\},
\end{equation}
where $\delta$ is defined by equality~\eqref{delta}. Note that  $\Gamma_n\subset K$ for all~$n$.
On~$\pi^{-1}(V\setminus F_k)$ the meromorphic functions $M_{j_1,\dots,j_k}(\myt\zz)$ have neither zeros nor poles, and so
\begin{equation}
\label{mod_M}
C_{j_1,\dots,j_k}:=\max\limits_{\myt\zz\in\pi^{-1}(K)\setminus \TRS_{[k]}^{(01\dots k-1)}}\left|\frac{M_{j_1,\dots,j_k}(\myt\zz)}{M_{j_1,\dots,j_k}(\myt\zz^{(01\dots k-1)})}\right|<\infty.
\end{equation}
The functions $u_i$ are continuous in~$\CC$  (see Appendix~1, Lemma~1 in~\cite{ChKoPaSu}), and besides,
for $k>1$ the function $u_k(z)-u_{k-1}(z)$ is continuous near~$\infty$, and for $k=1$ it tends to~$+\infty$ as $z\to\infty$. 
So,
since the compact set~$K$ does not intersect $F_k$, we have
\begin{equation}
\label{kappa}
\varkappa:=\min\limits_{z\in K}
\left( u_k(z)-u_{k-1}(z)\right)>0.
\end{equation}
In order to estimate  $\frac{\psi_n(\myt\zz)}{\psi_n(\myt\zz^{(01\dots k-1)})}$, where
$\psi_n(\myt\zz)=\exp\bigl\{\sum_{i=1}^S g(\myt\aa_i(n),\myt\bb_i;\myt\zz)\bigr\}$ (see \eqref{psi_n}),
we will derive an estimate for the functions $g(\myt\aa_i(n),\myt\bb_i;\myt\zz)$ themselves. For this purpose we use
Corollary~6 from~\cite{ChKoPaSu}, which in our setting states the following. 
For bipolar Green functions $g(\myt\qq,\myt\pp;\myt\zz)$ that
spherically  normalized on the sheet $\TRS_{[k]}^{(01\dots k-1)}$ and for an arbitrary $\delta'>0$,
there exists a~constant $C=C(\delta')$ (independent of~$\myt\qq$ and~$\myt\pp$) such that $|g(\myt\qq,\myt\pp;\myt\zz)|<C$
for all $\myt\zz\in\TRS_{[k]}$ such that $\dist_\rho(\myt\zz,\myt\qq)\ge \delta'$ and $\dist_\rho(\myt\zz,\myt\pp)\ge \delta'$.
The systems of contours $\Gamma_n$ are chosen so that  $\dist(\Gamma_n, a_i(n))\ge\delta$, $\dist(\Gamma_n, b_i)\ge\delta$,
and hence  (since for any conformal metric~$\rho$ on~$\RS$ there exists a~constant $C_\rho>0$ such that
$\dist_\rho(\myt\zz_1, \myt\zz_2)\ge C_\rho\dist(z_1, z_2)$ for arbitrary $\myt\zz_1, \myt\zz_2\in\TRS_{[k]}$),
it follows that there exists a~constant
$\myt C$ such that $|g(\myt\aa_i(n),\myt\bb_i;\myt\zz)|\le \myt C$ for $\myt\zz\in\pi^{-1}(\Gamma_n)$.
So, for $\myt\zz_1,\myt\zz_2\in\pi^{-1}(\Gamma_n)$ we have
\begin{equation}
\label{mod_psi}
\frac{\psi_n(\myt\zz_1)}{\psi_n(\myt\zz_2)}\le e^{2S\myt C}.
\end{equation}
Combining \eqref{mod_M},~\eqref{kappa}, \eqref{mod_psi} and taking into account that
$\TRS_{[k]}$ has ${m+1\choose k}$ sheets, from \eqref{|h_n|} we obtain that, for $z\in\Gamma_n$,
\begin{equation}
|h_{n; j_1,\dots,j_k}(z)|\le {m+1\choose k} C_{j_1,\dots,j_k}e^{2S\myt C}e^{-n\varkappa}.
\end{equation}
Since $\varkappa>0$, we have  $\lim\limits_{n\to\infty}\max\limits_{z\in\Gamma_n}|h_{n; j_1,\dots,j_k}(z)| = 0$.

Since on the sheet $\TRS_{[k]}^{(01\dots k-1)}$ the projection $\myt\pi:\TRS_{[k]}^{(01\dots k-1)}\to\myh\CC\setminus F_k$ is biholomorphic, the functions  $\myt R_n(\myt\zz^{(01\dots k-1)})$ and $M_{j_1,\dots,j_k}(\myt\zz^{(01\dots k-1)})$ in~\eqref{P_Arg} will
be considered as meromorphic functions of $z\in\myh\CC\setminus F_k$, that is,
for example, $\myt R_n(\myt\zz^{(01\dots k-1)})=\myt R_n \circ (\myt\pi|_{\TRS_{[k]}^{(01\dots k-1)}})^{-1}(z)$.
So,  all the functions involved in \eqref{P_Arg}, except for $1+h_{n; j_1,\dots,j_k}$, are meromorphic in $\myh\CC\setminus F_k$, and hence,
we also have  $1+h_{n; j_1,\dots,j_k}(z)\in\MM(\myh\CC\setminus F_k)$.
Since $\lim\limits_{n\to\infty}\max\limits_{z\in\Gamma_n}|h_{n; j_1,\dots,j_k}(z)|= 0$, there exists an $N=N(V)$ such that,
for all $n>N$ we have $|h_{n; j_1,\dots,j_k}(z)|<1/2$ for $z\in\Gamma_n$.
Further, we assume that $n>N$.
Then $1+h_{n; j_1,\dots,j_k}(z)$ has no zeros on $\Gamma_n$.
Besides, by the choice of the system of contours $\Gamma_n$, the functions $\myt R_n(\myt\zz^{(01\dots k-1)})$ and $M_{j_1,\dots,j_k}(\myt\zz^{(01\dots k-1)})$
have neither zeros nor poles on $\Gamma_n$.
Hence all the functions from the right-hand side of~\eqref{P_Arg} are meromorphic  in~$\Omega_n$ and have no
zeros on $\Gamma_n = \partial\Omega_n$. Therefore,
the number of zeros of $P_{n;j_1,\dots,j_k}$ in~$\Omega_n$ can be evaluated using the argument principle.
We choose the orientation of each contour from~$\Gamma_n$ in such a way that $\Gamma_n$ is positively oriented with respect to~$\Omega_n$.

Assume first that  $\infty\notin F_k$. (Recall that in this case $\infty\notin V$, and hence, $\infty\in\Omega_n$.)
Then the number of zeros of the polynomials $P_{n;j_1,\dots,j_k}$ in~$\Omega_n$ is equal to
\begin{equation}
\label{Zero_P}
\deg P_{n;j_1,\dots,j_k}+\frac{1}{2\pi}\mDelta_{z\in \Gamma_n}\arg P_{n;j_1,\dots,j_k}(z).
\end{equation}
Since  $|h_{n; j_1,\dots,j_k}(z)|<1/2$ on $\Gamma_n$, we have  $\mDelta\limits_{z\in \Gamma_n}\arg (1+h_{n; j_1,\dots,j_k}(z))=0$. Therefore
from \eqref{P_Arg} we obtain
\begin{equation}
\label{arg_P}
\mDelta_{z\in \Gamma_n}\arg P_{n;j_1,\dots,j_k}(z)=\mDelta_{z\in \Gamma_n}\arg \myt R_n(\myt\zz^{(01\dots k-1)})+\mDelta_{z\in \Gamma_n}\arg M_{j_1,\dots,j_k}(\myt\zz^{(01\dots k-1)}).
\end{equation}
By the argument principle, $\frac{1}{2\pi}\mDelta_{z\in \Gamma_n}\arg M_{j_1,\dots,j_k}(\myt\zz^{(01\dots k-1)})$ equals
the  difference of the  number of  zeros and poles (taken with multiplicities)
of the function  $M_{j_1,\dots,j_k}(\myt\zz^{(01\dots k-1)})$ for $z\in \Omega_n$. Hence
\begin{equation}
\label{arg_M}
\frac{1}{2\pi}\mDelta_{z\in \Gamma_n}\arg M_{j_1,\dots,j_k}(\myt\zz^{(01\dots k-1)})\le \alpha_{j_1,\dots,j_k}.
\end{equation}
Since $\infty\notin F_k$, the point $\myt{\pmb\infty}^{(01\dots k-1)}$ is not a~critical point for $\myt\pi$.
Hence, from the form of the  divisor of the function $\myt R_n$ (see \eqref{div_tRn}) we
conclude that the function $\myt R_n(\myt\zz^{(01\dots k-1)})$ has a~pole of order $(m+1-k)n$ at $\myt{\pmb\infty}^{(01\dots k-1)}$ 
(not taken into accout poles $\myt\bb_j$ and free zeros $\myt\aa_j(n)$, the number of which at 
$\myt{\pmb\infty}^{(01\dots k-1)}$ is not greater than $S$). Consequently,
\begin{equation}
\label{arg_R_1}
\frac{1}{2\pi}\mDelta_{z\in \Gamma_n}\arg \myt R_n(\myt\zz^{(01\dots k-1)})\le -(m+1-k)n+S.
\end{equation}
We have  $\deg P_{n;j_1,\dots,j_k}\le (m+1-k)n$, and hence, combining \eqref{Zero_P},~\eqref{arg_P} and \eqref{arg_R_1},
we see that the number of zeros of the polynomials $P_{n;j_1,\dots,j_k}$ in~$\Omega_n$ (for $n>N$) is at most  $L_{j_1,\dots,j_k}:=S+\alpha_{j_1,\dots,j_k}$.

If $\infty\in F_k$, then we always have $\infty\notin\Omega_n$. Therefore,  the number of zeros of the
polynomials $P_{n;j_1,\dots,j_k}$ in~$\Omega_n$ is equal to
$\frac{1}{2\pi}\mDelta\limits_{z\in \Gamma_n}\arg P_{n;j_1,\dots,j_k}(z)$.
Proceeding as in the case $\infty\notin F$, we conclude that
estimate~\eqref{arg_M} for $\mDelta\limits_{z\in \Gamma_n}\arg M_{j_1,\dots,j_k}(\myt\zz^{(01\dots k-1)})$ again holds true,
and from \eqref{div_tRn} we obtain that
$\frac{1}{2\pi}\mDelta\limits_{z\in \Gamma_n}\arg \myt R_n(\myt\zz^{(01\dots k-1)})\le S$.
As a~result, we again get that the number of zeros of the polynomials $P_{n;j_1,\dots,j_k}$ in~$\Omega_n$ (for  $n>N$) is at most
 $L_{j_1,\dots,j_k}:=S+\alpha_{j_1,\dots,j_k}$.
\end{proof}

Recall that the functions $h_{n; j_1,\dots,j_k}(z)$ are defined in \eqref{h_n}.

\begin{Lemma}
\label{L_1}
Suppose that the surface $\TRS_{[k]}$, which is constructed from~ $\pi$, is con\-nect\-ed.
Then, for any neighbourhood~$V$ of the compact set~$F_k$, there exists an $N=N(V)$
such that, for all $n>N$, the number of zeros and poles (taken with multiplicities) of the function $1+h_{n; j_1,\dots,j_k}(z)$ in $\myh\CC\setminus V$
is at most $L_{j_1,\dots,j_k}$.
\end{Lemma}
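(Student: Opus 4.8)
The plan is to start from the factorization \eqref{P_Arg} obtained in the proof of Statement~\ref{St_zero}: for $z\in\myh\CC\setminus F_k$,
\[
1+h_{n; j_1,\dots,j_k}(z)=\frac{P_{n;j_1,\dots,j_k}(z)}{M_{j_1,\dots,j_k}(\myt\zz^{(01\dots k-1)})\,\myt R_n(\myt\zz^{(01\dots k-1)})},
\]
so that $\Phi_n:=1+h_{n; j_1,\dots,j_k}$ is meromorphic on $\myh\CC\setminus F_k$. I would take the same neighbourhoods $V$ (shrunk so that $M_{j_1,\dots,j_k}(\myt\zz)$ has no zeros or poles over $V\setminus F_k$, and $\infty\notin V$ if $\infty\notin F_k$), the same systems of smooth contours $\Gamma_n$ bounding $D_n$ with $F_k\subset D_n$, $\myo{D_n}\subset V$, $\dist(\Gamma_n,a_i(n))\ge\delta$, $\dist(\Gamma_n,b_i)\ge\delta$, and the set $\Omega_n:=\myh\CC\setminus\myo{D_n}$, exactly as in that proof; here $a_i(n)$, $b_i$ are the projections of the zeros $\myt\aa_i(n)$ and poles $\myt\bb_i$ of $\myt R_n$, and $\myh\CC\setminus V\subset\Omega_n\subset\myh\CC\setminus F_k$. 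Since the proof of Statement~\ref{St_zero} already gives $\max_{z\in\Gamma_n}|h_{n; j_1,\dots,j_k}(z)|\to0$, there is $N=N(V)$ with $|h_{n; j_1,\dots,j_k}|<1/2$ on $\Gamma_n$ for $n>N$; then $\Phi_n$ has no zeros on $\Gamma_n$ (there $|\Phi_n|>1/2$) and no poles on $\Gamma_n$ (neither factor in the denominator vanishes on $\Gamma_n$).

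The first step is to bound the poles of $\Phi_n$ in $\Omega_n$. Because $P_{n;j_1,\dots,j_k}$ is a polynomial --- holomorphic on $\CC$, and with a pole at $\infty$ that can only lower the order of $\Phi_n$ --- a pole of $\Phi_n$ at a point of $\Omega_n$ has order at most the order of the zero there of $M_{j_1,\dots,j_k}(\myt\zz^{(01\dots k-1)})\,\myt R_n(\myt\zz^{(01\dots k-1)})$. Now $M_{j_1,\dots,j_k}(\myt\zz^{(01\dots k-1)})$ has exactly $\alpha_{j_1,\dots,j_k}$ zeros on the sheet $\TRS_{[k]}^{(01\dots k-1)}$, hence at most $\alpha_{j_1,\dots,j_k}$ in $\Omega_n$; and by the form of the divisor \eqref{div_tRn}, over $\CC$ the zeros of $\myt R_n(\myt\zz^{(01\dots k-1)})$ lie among the $\myt\aa_j(n)$ (at most $S$ on this sheet), while over $\infty$, when $\infty\notin F_k$, this function has a pole, not a zero. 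Thus the number of poles of $\Phi_n$ in $\Omega_n$, with multiplicities, is at most $\alpha_{j_1,\dots,j_k}+S=L_{j_1,\dots,j_k}$.

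The second step is to bound the zeros via the argument principle applied to $\Phi_n$ on $\Omega_n$. As $\Phi_n$ is meromorphic near $\myo{\Omega_n}$ with no zeros or poles on $\Gamma_n=\partial\Omega_n$, the number of its zeros in $\Omega_n$ minus the number of its poles in $\Omega_n$ (with multiplicities, counting also any contribution at $\infty$) equals $\tfrac{1}{2\pi}\mDelta_{z\in\Gamma_n}\arg\Phi_n(z)$, and this quantity is $0$ because $|h_{n; j_1,\dots,j_k}|<1/2$ on $\Gamma_n$ keeps $\Phi_n(\Gamma_n)$ in the disc $\{|w-1|<1/2\}$, which does not encircle the origin. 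Hence the number of zeros of $\Phi_n$ in $\Omega_n$ equals the number of its poles there, so it is also at most $L_{j_1,\dots,j_k}$. Since $\myh\CC\setminus V\subset\Omega_n$, both the number of zeros and the number of poles of $1+h_{n; j_1,\dots,j_k}$ in $\myh\CC\setminus V$ are at most $L_{j_1,\dots,j_k}$, which is the assertion of the lemma.

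The only real work, the uniform estimate $\max_{z\in\Gamma_n}|h_{n; j_1,\dots,j_k}(z)|\to0$, has already been done in the proof of Statement~\ref{St_zero}; the rest is bookkeeping. The one point deserving care is the behaviour at $\infty$ when $\infty\notin F_k$: there $\myt R_n(\myt\zz^{(01\dots k-1)})$ has a pole of order $(m+1-k)n+O(1)$, which for $n$ large exceeds $\deg P_{n;j_1,\dots,j_k}$, so $\Phi_n$ has at most a zero --- and no pole --- at $\infty$, and $\infty$ needs no separate treatment.
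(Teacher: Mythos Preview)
Your proof is correct and follows essentially the same route as the paper's: the identity $1+h_{n;j_1,\dots,j_k}=P_{n;j_1,\dots,j_k}/(M_{j_1,\dots,j_k}(\myt\zz^{(01\dots k-1)})\,\myt R_n(\myt\zz^{(01\dots k-1)}))$, the pole bound via the zeros of $M_{j_1,\dots,j_k}$ on the zeroth sheet plus the free zeros $\myt\aa_j(n)$ of $\myt R_n$, and the argument principle on $\Omega_n$ using $|h_{n;j_1,\dots,j_k}|<1/2$ on $\Gamma_n$ to conclude that zeros and poles in $\Omega_n$ are equal in number, are exactly the paper's steps.

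One small caveat on your closing remark: the claim that $\Phi_n$ has ``no pole at $\infty$'' is not literally guaranteed, since some of the free zeros $\myt\aa_j(n)$ may sit at $\myt{\pmb\infty}^{(01\dots k-1)}$ and lower the pole order of $\myt R_n$ there below $\deg P_{n;j_1,\dots,j_k}$ (and $M_{j_1,\dots,j_k}$ may also have a zero there). However, any such contribution is already absorbed in the counts $S$ and $\alpha_{j_1,\dots,j_k}$, so the total number of poles of $\Phi_n$ over all of $\myh\CC\setminus F_k$ is still at most $S+\alpha_{j_1,\dots,j_k}=L_{j_1,\dots,j_k}$; this is precisely how the paper phrases the pole bound, sidestepping the case distinction at~$\infty$.
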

\begin{proof}
By definition~\eqref{h_n},
\begin{equation}
\label{L_h}
1+h_{n; j_1,\dots,j_k}(z)=\frac{P_{n;j_1,\dots,j_k}(z)}{\myt R_n(\zz^{(01\dots k-1)})M_{j_1,\dots,j_k}(\myt\zz^{(01\dots k-1)})},
\end{equation}
where, as in the proof of Statement~\ref{St_zero}, the functions $\myt R_n(\myt\zz^{(01\dots k-1)})$ and $M_{j_1,\dots,j_k}(\myt\zz^{(01\dots k-1)})$
are understood as meromorphic functions of  $z\in\myh\CC\setminus F_k$.
Hence  $h_{n; j_1,\dots,j_k}(z)\in\MM(\myh\CC\setminus F_k)$.
Taking into account the form of the divisor of $\myt R_n$~\eqref{div_tRn} and
since $\deg P_{n; j_1,\dots, j_k}\le (m+1-k)n$, we conclude that the number of poles of the function  $1+h_{n; j_1,\dots,j_k}(z)$ in $\myh\CC\setminus F_k$
is at most $S+\alpha_{j_1,\dots,j_k}=L_{j_1,\dots,j_k}$. Consequently,  their number in $\myh\CC\setminus V$ is also at most $L_{j_1,\dots,j_k}$.

For each $n$, we choose the same system of contours $\Gamma_n$ as in the proof of
Statement~\ref{St_zero}. In particular, $\Gamma_n$ bounds the open set $D_n$ such that $F_k\subset D_n\subset V$.
In the proof of Statement~\ref{St_zero} it was shown that
$\lim\limits_{n\to\infty}\max\limits_{z\in\Gamma_n}|h_{n; j_1,\dots,j_k}(z)|= 0$. We choose an~$N$ such that, for all $n>N$,
$|h_{n; j_1,\dots,j_k}(z)|<1/2$ for $z\in\Gamma_n$.
Further, we assume that $n>N$.
Since $|h_{n; j_1,\dots,j_k}(z)|<1/2$ on $\Gamma_n$, by the argument principle the number of zeros of the function  $1+h_{n; j_1,\dots,j_k}$
in the open set $\Omega_n:=\myh\CC\setminus\myo D_n$ is equal to the number of its poles in the same set.
We have $\myh\CC\setminus V\subset\Omega_n$, and hence the number of zeros
of the function  $1+h_{n; j_1,\dots,j_k}(z)$ for $z\in\myh\CC\setminus V$ is not greater than the number of its
poles in $\myh\CC\setminus F_k$, that is,  $L_{j_1,\dots,j_k}$.
\end{proof}

Recall that the functions $u_i$, which define the Nuttall partition \eqref{inequ_u}, are already defined uniquely,
because the normalization of the function $\myt u$~\eqref{uu} is fixed in \eqref{norm_g}:
\begin{equation}
\label{norm_sum_u}
\int _{\myh\CC}\sum_{s=0}^{k-1}u_s(z)d\sigma(z)\equiv
\int _{\myh\CC\setminus F_k}\myt u(\myt\zz^{(01\dots k-1)})d\sigma(z)=0.
\end{equation}

\begin{Statement}
\label{St_lnP}
Suppose that the  surface $\TRS_{[k]}$, which is constructed from~$\pi$, is connected. Then, for an arbitrary $p\in[1,\infty)$, as $n\to\infty$,
\begin{equation}
\label{lnP}
\frac{1}{n}\log|P_{n;j_1,\dots,j_k}^*(z)|\to -\sum_{s=0}^{k-1}u_s(z)
\quad \text{in}\quad L^p(\myh\CC),
\end{equation}
\begin{equation}
\label{ddc_Pn}
\frac{1}{n}\ddc \log|P_{n;j_1,\dots,j_k}(z)|\xrightarrow{*} -\ddc \left(\sum_{s=0}^{k-1}u_s(z)\right) \quad\text{in}\quad C(\myh\CC)^*.
\end{equation}
\end{Statement}

\begin{proof}
We fix the index set $j_1,\dots, j_k$.
Using \eqref{P_Arg} and taking into account \eqref{|tRn|_2}, we see that,  for $z\in\myh\CC\setminus F_k$,
\begin{equation}
\label{lnPn*_1}
\begin{aligned}
&\frac{1}{n}\log|P_{n;j_1,\dots,j_k}(z)|=-\sum_{s=0}^{k-1}u_s(z) +\\
&\frac{1}{n}\log\left\{\psi_n(\myt\zz^{(01\dots k-1)})\cdot |M_{j_1,\dots,j_k}(\myt\zz^{(01\dots k-1)})|\cdot |1+h_{n; j_1,\dots,j_k}(z)|\right\},
\end{aligned}
\end{equation}
where $h_{n; j_1,\dots,j_k}$ is defined in \eqref{h_n}.
(Here  $\psi_n(\myt\zz^{(01\dots k-1)})$ and $M_{j_1,\dots,j_k}(\myt\zz^{(01\dots k-1)})$ are again understood  as
meromorphic functions of  $z\in\myh\CC\setminus F_k$.)
Since $F_k$ is a~piecewise analytic subset of~$\myh\CC$ (see \cite{ChKoPaSu}, Attachment~1, Lemma~3),
we have $\sigma(F_k)=0$. Hence equality~\eqref{lnPn*_1} can be understood as the equality of two elements of $L^p(\myh\CC)$.
Next we assume that $p\in[1,\infty)$ is fixed.

Since (see \eqref{norm_g}) we spherically normalized all bipolar Green functions  $g(\myt\qq,\myt\pp;\myt\zz)$
on the sheet $\TRS_{[k]}^{(01\dots k-1)}$ of the compact Riemann surface $\TRS_{[k]}$, which is connected by the assumption,
we have (see \cite{ChKoPaSu}, Attachment~2, Corollary~5) that their norms in the space $L_p(\TRS_{[k]})$
are uniformly bounded by some constant~$C_1$. (Note that in~\cite{ChKoPaSu}
this corollary was formulated only for $p\in(1,\infty)$, but its proof
also holds for $p=1$.
Moreover, this is immaterial for us, because on any compact set the convergence in the space $L^p$ 
implies the convergence in all $L^q$ with $1\le q<p$.)
Since the surface $\TRS_{[k]}$ is compact, for any function $f\in L^p(\TRS_{[k]})$ we have
$\|f(\myt\zz^{(01\dots k-1)})\|_{L^p(\myh\CC)}\le C_2\|f\|_{L^p(\RS)}$, where $C_2:=\max\limits_{\myt\zz\in\TRS}\left(\frac{d\sigma(z)}{d\sigma_\rho(\myt\zz)}\right)^{1/p}<\infty$.
Therefore from definition \eqref{psi_n} we get
\begin{equation}
\label{||psi||}
\|\log\psi_n(\myt\zz^{(01\dots k-1)})\|_{L^p(\myh\CC)}\le
C_2\|\log\psi_n(\myt\zz)\|_{L^p(\TRS_{[k]})}\le
C_2 C_1 S.
\end{equation}
Consequently, $\frac{1}{n}\log\psi_n(\myt\zz^{(01\dots k-1)})\to 0$ in $L^p(\myh\CC)$.
Since $M_{j_1,\dots, j_k}(\myt\zz)\in\MM(\TRS_{[k]})$, we have
 $\log|M_{j_1,\dots, j_k}(\myt\zz)|\in L^p(\TRS_{[k]})$. This implies that
 $\log|M_{j_1,\dots, j_k}(\myt\zz^{(01\dots k-1)})|\in L^p(\myh\CC)$. Therefore,  $\frac{1}{n}\log |M_{j_1,\dots, j_k}(\myt\zz^{(01\dots k-1)})|\to 0$ in $L^p(\myh\CC)$.
Let us now show that  $\frac{1}{n}\log|1+h_{n;j_1\dots,j_k}(z)|\to 0$ in $L^p_{\loc}(\myh\CC\setminus F_k)$.
In view of the above, this together with \eqref{lnPn*_1} will imply that $\frac{1}{n}\log|P_{n;j_1,\dots j_k}|\to -\sum_{s=0}^{k-1}u_s(z)$ in $L^p_{\loc}(\myh\CC\setminus F_k)$.

So, fix a neighborhood $V$ of the compact set $F_k$. Let us show that
$\frac{1}{n}\log|1+h_{n; j_1,\dots, j_k}|\to 0$ in $L^p(\myh\CC\setminus V)$.
Passing if necessary to a~smaller neighborhood, we assume that the function $M_{j_1,\dots, j_k}(\myt\zz)$
has neither zeros nor poles on the set $\myt\pi^{-1}(V\setminus F_k)$, and if $\infty\notin F_k$, we assume that $\infty\notin V$.
We set
\begin{equation}
\label{delta_2}
\delta:=\frac{\dist(\partial V, F_k)}{2(2S+2L_{j_1,\dots, j_k}+3)},
\end{equation}
where $L_{j_1,\dots,j_k}:=S+\alpha_{j_1,\dots,j_k}$, $S$ is the number of unknown zeros in the divisor
of $(\myt R_n)$~\eqref{div_tRn},  and $\alpha_{j_1,\dots,j_k}$ is the number of zeros
of the function $M_{j_1,\dots,j_k}(\myt\zz)$ on the sheet $\TRS_{[k]}^{(01\dots k-1}$.
Let $V_\delta:=\{z\in\myh\CC:\dist(z, F_k)<\delta\}$.
By Lemma~\ref{L_1}, there exists an $N$ such that, for all $n>N$, the number of zeros and poles (taken with multiplicities)
of the function $1+h_{n; j_1,\dots, j_k}(z)$ in $\myh\CC\setminus V_\delta$ is at most $L_{j_1,\dots,j_k}$.
Further we assume that $n>N$.
Let  $q_1(n),\dots,q_{l(n)}(n)$ be the zeros  (taken with multiplicities)
of the function $(1+h_{n; j_1,\dots, j_k})$ in $\myh\CC\setminus V_\delta$ an $p_1(n),\dots,p_{l'(n)}(n)$ be the poles (also taken with multiplicities) of it.
Then $l(n), l'(n)\le L_{j_1,\dots, j_k}$.
We set $\myt\qq_s(n):=\myt\pi^{-1}(q_s(n))\cap\TRS_{[k]}^{(01\dots k-1)}$ and $\myt\pp_s(n):=\myt\pi^{-1}(p_s(n))\cap\TRS_{[k]}^{(01\dots k-1)}$.
We fix a~point $\myt\zz^*\in\partial\TRS_{[k]}^{(01\dots k-1)}$, and for $\myt\zz\in\TRS_{[k]}$, define the function
\begin{equation}
\label{psi_tilde}
\psi_{n;j_1,\dots j_k}(\zz):=\exp\biggl\{\sum_{s=1}^{L_{j_1,\dots j_k}} g(\myt\qq_s(n),\myt\zz^*;\myt\zz)+
\sum_{s=1}^{L_{j_1,\dots j_k}} g(\myt\zz^*, \myt\pp_s(n);\myt\zz) \biggr\},
\end{equation}
where bipolar Green functions   $g(\myt\qq,\myt\pp;\zz)$~\eqref{g_as} are spherically normalized
on the sheet $\TRS_{[k]}^{(01\dots k-1)}$ (see~\eqref{norm_g}).
In~\eqref{psi_tilde}, we assume that  $l(n)=l'(n)=L_{j_1,\dots j_k}$,
complementing, if necessary, the collections $\{\myt \qq_s(n)\}_{s=1}^{l(n)}$ and $\{\myt \pp_s(n)\}_{s=1}^{l'(n)}$
with the point $\zz^*$ (taking $L_{j_1,\dots j_k}-l(n)$  and $L_{j_1,\dots j_k}-l'(n)$ times, respectively);
we also complement the collections $\{q_s(n)\}$ and $\{p_s(n)\}$ with the point $z^*=\myt\pi(\zz^*)$ (taking it the same number of times).
Proceeding as in the derivation of the uniform estimate for the functions $\psi_n$~\eqref{||psi||}, we find that
\begin{equation*}
\|\log\psi_{n;j_1,\dots, j_k}(\myt\zz^{(01\dots k-1)})\|_{L^p(\myh\CC)}\le
2C_2 C_1 L_{j_1,\dots j_k}.
\end{equation*}
Therefore $\frac{1}{n}\log\psi_{n;j_1,\dots, j_k}(\myt\zz^{(01\dots k-1)})\to 0$ in $L^p(\myh\CC)$.
So, it remains to verify that
\begin{equation}
\label{h/psi}
\frac{1}{n}\log\frac{|1+h_{n;j_1,\dots, j_k}(z)|}{\psi_{n;j_1,\dots, j_k}(\myt\zz^{(01\dots k-1)})}
\longrightarrow 0 \quad \text{in}\quad L^p(\myh\CC\setminus V).
\end{equation}

We will prove that the function $\log\frac{|1+h_{n;j_1,\dots, j_k}(z)|}{\psi_{n;j_1,\dots, j_k}(\myt\zz^{(01\dots k-1)})}$
is uniformly bounded   on the set $\myh\CC\setminus V$. Of course, this will imply \eqref{h/psi}.
As pointed out above (see~\eqref{L_h}), the function $1+h_{n; j_1,\dots,j_k}$ is meromorphic  in~$\myh\CC\setminus F_k$. Therefore,
$\log|1+h_{n; j_1,\dots,j_k}|$ is a~harmonic function in $\myh\CC\setminus V_\delta$, except the
points $q_1(n),\dots,q_{l(n)}(n)$ and $p_1(n),\dots,p_{l'(n)}(n)$, where it has the corresponding logarithmic singularities.
By construction (see \eqref{psi_tilde}) the function $\log\psi_{n;j_1,\dots j_k}$ is also harmonic in this domain and has
the same logarithmic singularities as $\log|1+h_{n; j_1,\dots,j_k}|$ at the points $q_1(n),\dots,q_{l(n)}(n)$ and $p_1(n),\dots,p_{l'(n)}(n)$,
and hence the function $\log\frac{|1+h_{n;j_1,\dots, j_k}(z)|}{\psi_{n;j_1,\dots, j_k}(\myt\zz^{(01\dots k-1)})}$ is harmonic in $\myh\CC\setminus V_\delta$.
By the choice of~$\delta$ (see \eqref{delta_2}), for each~$n$ one can find a~system of disjoint smooth contours~$\Gamma_n$
bounding an open set $D_n$ such that $F_k\subset D_n\subset V$,  and the following conditions hold true:
$\dist(\Gamma_n, F_k)\ge\delta$,
$\dist(\Gamma_n, \partial V)\ge\delta$,
$\dist(\Gamma_n, q_s(n))\ge\delta$,
$\dist(\Gamma_n, p_{s}(n))\ge\delta$, $s=1,\dots,L_{j_1,\dots j_k}$,
$\dist(\Gamma_n, a_i)\ge\delta$,
$\dist(\Gamma_n, b_i(n))\ge\delta$, $i=1,\dots,S$,
where $a_i(n)=\myt\pi(\aa_i(n))$, $b_i=\myt\pi(\bb_i)$ are the projections of the zeros and poles of the remainder function $\myt R_n$ (see \eqref{div_tRn}).
In particular, we have
$\dist(\Gamma_n, F_k)\ge\delta$,
$\dist(\Gamma_n, \partial V)\ge\delta$,
$\dist(\Gamma_n, b_i)\ge\delta$,
$\dist(\Gamma_n, a_i(n))\ge\delta$, and so $\lim\limits_{n\to\infty}\max\limits_{z\in\Gamma_n}|h_{n;j_1,\dots, j_k}(z)|= 0$ (see the derivation of the analogous property in the proof of Statement~\ref{St_zero}).
Therefore,  there exists an $N'$ such that $|h_{n;j_1,\dots, j_k}(z)|<1/2$ on $\Gamma_n$ for all $n>N'$.
Further, we assume that $n>N'>N$. Hence, for $z\in\Gamma_n$,
\begin{equation}
\label{h_bound}
1/2<|1+h_{n;j_1,\dots, j_k}(z)|<3/2.
\end{equation}
We have
$\dist(\Gamma_n, \myt p_s(n))\ge\delta$,
$\dist(\Gamma_n, \myt q_{s}(n))\ge\delta$, and
$\dist(\Gamma_n, z^*)\ge\delta$, and so, proceeding as in the derivation of estimate \eqref{mod_psi} in the proof of
Statement \ref{St_zero}, we obtain that there exists a~constant
$\myt C$ such that
$|g(\myt\qq_s(n), \myt\zz^*;\myt\zz)|\le\myt C$ and $|g(\myt\zz^*, \myt\pp_s(n);\myt\zz)|\le\myt C$
for $\myt\zz\in\pi^{-1}(\Gamma_n)$.
So,  from \eqref{psi_tilde} we get, for $z\in\Gamma_n$,
\begin{equation}
\label{psi_tilde_bound}
|\log\psi_{n;j_1,\dots, j_k}(\myt\zz^{(01\dots k-1)})|\le 2L_{j_1,\dots, j_k} \myt C.
\end{equation}
Combining \eqref{h_bound} and \eqref{psi_tilde_bound}, we find that, for $z\in\Gamma_n$,
\begin{equation}
\label{log_h/psi}
\left|\log\frac{|1+h_{n;j_1,\dots, j_k}(z)|}{\psi_{n;j_1,\dots, j_k}(\myt\zz^{(01\dots k-1)})}\right|\le
2L_{j_1,\dots, j_k} \myt C + 1.
\end{equation}
We set  $\Omega_n:=\myh\CC\setminus \myo D_n$.
Since the  function $\log\frac{|1+h_{n;j_1,\dots, j_k}(z)|}{\psi_{n;j_1,\dots, j_k}(\myt\zz^{(01\dots k-1)})}$ is harmonic in $\myh\CC\setminus V_\delta\supset\Omega_n$ and $\Gamma_n=\partial \Omega_n$, from the maximum principle it follows that estimate~\eqref{log_h/psi} holds
in the whole domain  $\Omega_n$, and therefore, in $\myh\CC\setminus V\subset\Omega_n$.
So, we have shown that $\frac{1}{n}\log|P_{n;j_1,\dots j_k}|\to -\sum_{s=0}^{k-1}u_s(z)$ in $L^p_{\loc}(\myh\CC\setminus F_k)$.

To prove the statement, we show that from any subsequence $\{P^*_{n; j_1,\dots,j_k}\}$, $n\in\Lambda$,
of polynomials $P^*_{n; j_1,\dots,j_k}$ one can choose a~subsequence $\{P^*_{n; j_1,\dots,j_k}\}$, $n\in\Lambda'\subset\Lambda$
satisfying \eqref{lnP} and~\eqref{ddc_Pn}.
By the Poincar\'e---Lelong formula  (see, for example, \cite{Chi06}), we have
\begin{equation}
\label{P--L}
\mu_{n; j_1,\dots,j_k}:=\ddc\log |P^*_{n; j_1,\dots,j_k}|=2\pi\Bigl(\sum\limits_{z:P^*_{n; j_1,\dots,j_k}(z)=0}\delta_z
-\deg P^*_{n; j_1,\dots,j_k}\cdot\delta_\infty\Bigr),
\end{equation}
where $\delta_x$ is the delta-measure at the point $x\in\myh\CC$.
Since  $\deg P^*_{n; j_1,\dots,j_k}\le (m+1-k)n$, we have $\|\frac{1}{n}\mu_{n; j_1,\dots,j_k}\|_{C(\myh\CC)^*}\le 4\pi(m+1-k)$.
By the Banach--Alaoglu theorem  (on the compactness of a~norm-closed ball of the dual space in weak* topology),
the sequence $\{\frac{1}{n}\mu_{n; j_1,\dots,j_k}\}$, $n\in\Lambda$, has a~subsequence $\{\frac{1}{n}\mu_{n; j_1,\dots,j_k}\}$, $n\in\Lambda'\subset\Lambda$,
which  weak* converges to some signed measure  $\mu_{j_1,\dots,j_k}\in C(\myh\CC)^*$. Let us show that  $\Lambda'$
satisfies \eqref{lnP} and \eqref{ddc_Pn}. Consider the space of potentials $\Pot_\phi(\myh\CC)$ (see \eqref{Pot_phi}), where
the functional~$\phi$ is defined by the area form $d\sigma$ (see~\eqref{ds(v)}).
The measure $d\sigma$ has smooth density with respect to the Lebesgue measure in any coordinate neighbourhood,
and hence, clearly, all its local potentials are continuous. So, we can apply the corollary of the lemma in
\S2.3 in~\cite{Ch1}, which states that if a~measure defining the functional~$\phi$ has continuous local potentials, then
the weak convergence of measures  implies the convergence of their potentials from $\Pot_\phi$ in all spaces
$L^p$ with  $p\in[1,\infty)$. Hence, $\frac{1}{n}(\myh\mu_{n; j_1,\dots,j_k})_\phi\to(\myh\mu_{j_1,\dots,j_k})_\phi$ for
$n\in\Lambda'$ in $L^p(\myh\CC)$ for all $p\in[1,\infty)$.
Since the  polynomials $P^*_{n; j_1,\dots,j_k}$ are spherically normalized (see \eqref{P^*}),
we have $P^*_{n; j_1,\dots,j_k}\equiv(\myh\mu_{n; j_1,\dots,j_k})_\phi$. Consequently, as $n\to\infty$, $n\in\Lambda'$,
\begin{equation}
\label{Pn*_to}
\frac{1}{n}\log |P^*_{n; j_1,\dots,j_k}|\to
(\myh\mu_{j_1,\dots,j_k})_\phi
\quad\text{in}\quad L^p(\myh\CC)
\end{equation}
for all $p\in[1,\infty)$.

Using \eqref{Pn*_to} and since $\frac{1}{n}\ddc\log |P^*_{n; j_1,\dots,j_k}|\xrightarrow{*}\mu_{j_1,\dots,j_k}$ as $n\in\Lambda'$  by construction, we see that it remains to show that $(\myh\mu_{j_1,\dots,j_k})_\phi=-\sum_{s=0}^{k-1}u_s$ in $L^p(\myh\CC)$. Indeed, we have
 $\frac{1}{n}\log|P_{n; j_1,\dots,j_k}|\to -\sum_{s=0}^{k-1}u_s$ in $L^p_{\loc}(\myh\CC\setminus F_k)$ and $P^*_{n; j_1,\dots,j_k}=c_{n; j_1,\dots,j_k}P_{n; j_1,\dots,j_k}$,
where $c_{n; j_1,\dots,j_k}>0$ are some constants, and hence from \eqref{Pn*_to} we find that, as  $n\to\infty$, $n\in\Lambda'$,
\begin{equation*}
\frac{1}{n}\log c_{n; j_1,\dots,j_k}
\to (\myh\mu_{j_1,\dots,j_k})_\phi+\sum_{s=0}^{k-1}u_s
\quad \text{in} \quad L^p_{\loc}(\myh\CC\setminus F_k).
\end{equation*}
Since $c_{n; j_1,\dots,j_k}$ are constants, they can converge only to a~constant. Therefore,
$(\myh\mu_{j_1,\dots,j_k})_\phi=-\sum_{s=0}^{k-1}u_s+\const$ in $L^p(\myh\CC)$.
On the other hand, the function $-\sum_{s=0}^{k-1}u_s$ is spherically normalized (see \eqref{norm_sum_u}), and so we have 
$\phi((\myh\mu_{j_1,\dots,j_k})_\phi)=\phi(-\sum_{s=0}^{k-1}u_s)=0$.
So, $\const=0$, that is,  $(\myh\mu_{j_1,\dots,j_k})_\phi=-\sum_{s=0}^{k-1}u_s$.
\end{proof}

\subsection{Proof of Theorem~\ref{theorem2}}
\label{s5.3}

Theorem \ref{theorem2} is precisely Corollary~\ref{Cor_P/P} (see also Re\-mark~\ref{t2_rem}), which  will be proved
in this section.

Let $\myt\ww_1, \dots \myt\ww_W$ be all zeros and poles  (taken without multiplicities)  of all functions $M_{j_1,\dots,j_k}(\myt\zz)$,
$0\le j_1<\dots<j_k\le m$, and let  $w_i=\myt\pi(\myt\ww_i)$, $i=1,\dots,W$, be their projections.
Recall that $a_i(n)=\myt\pi(\myt\aa_i(n))$, $b_i=\myt\pi(\myt\bb_i)$, $i=1,\dots,S$, are the projections of the zeros and poles
of the remainder function $R_n$ (see \eqref{div_tRn}).
For any $\varepsilon>0$ and any point $z^*\in\myh\CC$, we denote by
$O_{z^*}^\varepsilon:=\{z:\dist(z, z^*)<\varepsilon\}$
the disc in the spherical metric with  centre at~$z^*$ of radius~$\varepsilon$.
For any compact set $K\subset\myh\CC$ and $\varepsilon>0$, we set
\begin{equation}
K^\varepsilon(n):=K\setminus\left(\bigcup_{i=1}^{W}O_{w_i}^\varepsilon
\cup \bigcup_{i=1}^{S}O_{a_i(n)}^\varepsilon
\cup \bigcup_{i=1}^{S}O_{b_i}^\varepsilon\right).
\end{equation}

\begin{Statement}
\label{St_P/P}
Assume that the  surface $\TRS_{[k]}$, which is constructed from~$\pi$, is connected.
Then, for any compact set $K\subset\myh\CC\setminus F_k$ and arbitrary $\varepsilon>0$,
\begin{equation}
\label{P/P}
\lim_{n\to\infty}\max_{z\in K^\varepsilon(n)}\left|\frac{P_{n;j_1,\dots,j_k}(z)}{P_{n;i_1,\dots,i_k}(z)}
-\frac{M_{j_1,\dots,j_k}(\myt\zz^{(01\dots k-1)})}{M_{i_1,\dots,_k}(\myt\zz^{(01\dots k-1)})}\right|=0.
\end{equation}
Moreover, for the rate of convergence we have the following estimate
\begin{equation}
\label{P/P^}
\varlimsup_{n\to\infty}\max_{z\in K^\varepsilon(n)}\left(\left|\frac{P_{n;j_1,\dots,j_k}(z)}{P_{n;i_1,\dots,i_k}(z)}
-\frac{M_{j_1,\dots,j_k}(\myt\zz^{(01\dots k-1)})}{M_{i_1,\dots,_k}(\myt\zz^{(01\dots k-1)})}\right|^{1/n}\cdot
e^{u_{k}(z)-u_{k-1}(z)}\right)\le 1.
\end{equation}
\end{Statement}

\begin{proof}
We fix a compact set $K\subset\myh\CC\setminus F_k$ and $\varepsilon>0$.
From the expression for  $P_{n;j_1,\dots,j_k}$~\eqref{P_Arg} we see that, for $z\in K$,
\begin{equation}
\label{Pn/Pn}
\frac{P_{n;j_1,\dots,j_k}(z)}{P_{n;i_1,\dots,i_k}(z)}=
\frac{M_{j_1,\dots,j_k}(\myt\zz^{(01\dots k-1)})}{M_{i_1,\dots,_k}(\myt\zz^{(01\dots k-1)})}
\cdot\frac{1+h_{n; j_1\dots j_k}(z)}{1+h_{n;i_1\dots i_k}(z)},
\end{equation}
where $h_{n; j_1,\dots,j_k}$ are defined  in \eqref{h_n}.
Let us show that  $\lim\limits_{n\to\infty}\max\limits_{z\in K^\varepsilon(n)}|h_{n; j_1\dots j_k}(z)|=0$.
Of course, this will imply~\eqref{P/P}.
We will proceed as in the proof of Statement~\ref{St_zero} in the derivation of the
property  $\lim\limits_{n\to\infty}\max\limits_{z\in \Gamma_n}|h_{n;j_1,\dots, j_k}(z)|=0$. So,
for $z\in K$, for $h_{n; j_1,\dots,j_k}$ we use estimate~\eqref{|h_n|}
\begin{equation*}
\begin{aligned}
&|h_{n; j_1,\dots,j_k}(z)|\\
&\le \sum_{\myt\zz\in\myt\pi^{-1}(z)\setminus \myt\zz^{(01\dots k-1)}}  \left|\frac{M_{j_1,\dots,j_k}(\myt\zz)}{M_{j_1,\dots,j_k}(\myt\zz^{(01\dots k-1)})}\right|\frac{\psi_n(\myt\zz)}{\psi_n(\myt\zz^{(01\dots k-1)})}\cdot e^{-n(u_k(z)-u_{k-1}(z))}.
\end{aligned}
\end{equation*}
Let $K_1:=K\setminus\bigcup\limits_{k=1}^W O^\varepsilon_{w_k}$. (In particular, $K^\varepsilon(n)\subset K_1$.) Then
\begin{equation*}
C_{j_1\dots j_k}:=\max\limits_{\myt\zz\in\myt\pi^{-1}(K_1)}
\left|\frac{M_{j_1,\dots,j_k}(\myt\zz)}{M_{j_1,\dots,j_k}(\myt\zz^{(01\dots k-1)})}\right|<\infty.
\end{equation*}
We have $\dist(K^\varepsilon(n), a_i(n))\ge\varepsilon$, $\dist(K^\varepsilon(n), b_i)\ge\varepsilon$, $i=1,\dots,S$,
and so, proceeding as in the derivation of estimate \eqref{mod_psi} in the proof of Statement~\ref{St_zero}, we find that
there exists a~constant  $\myt C_1=\myt C_1(\varepsilon)$ such that, for $\myt\zz_1,\myt\zz_2\in\pi^{-1}(K^\varepsilon(n))$,
\begin{equation*}
\frac{\psi_n(\myt\zz_1)}{\psi_n(\myt\zz_2)}\le e^{2S\myt C_1}.
\end{equation*}
Consequently,
\begin{equation}
\label{|h_n|_3}
|h_{n; j_1,\dots,j_k}(z)|\le {m+1\choose k} C_{j_1,\dots,j_k}e^{2S\myt C_1}e^{-n(u_k(z)-u_{k-1}(z))}.
\end{equation}
The functions $u_i$ are continuous  in~$\CC$  (see Appendix~1, Lemma~1 in \cite{ChKoPaSu})  and besides,
for $k>1$, the function $u_k(z)-u_{k-1}(z)$ is continuous near~$\infty$ and for $k=1$
it converges to~$+\infty$ as $z\to\infty$. Hence, since the compact set~$K$ does not intersect with $F_k$, we have
$\varkappa:=\min\limits_{z\in K}
\left( u_k(z)-u_{k-1}(z)\right)>0$. 
So,
\begin{equation*}
|h_{n; j_1,\dots,j_k}(z)|\le {m+1\choose k} C_{j_1,\dots,j_k}e^{2S\myt C_1}e^{-n\varkappa}.
\end{equation*}
Since $\varkappa>0$, we have  $\lim\limits_{n\to\infty}\max\limits_{z\in K^\varepsilon(n)}|h_{n;j_1,\dots,j_k}(z)| = 0$.

Let us now prove \eqref{P/P^}. From \eqref{Pn/Pn} we obtain that, for $z\in K$,
\begin{equation}
\label{P/P-M/M}
\begin{aligned}
\left|\frac{P_{n;j_1,\dots,j_k}(z)}{P_{n;i_1,\dots,i_k}(z)}                                                 -\frac{M_{j_1,\dots,j_k}(\myt\zz^{(01\dots k-1)})}{M_{i_1,\dots,_k}(\myt\zz^{(01\dots k-1)})}\right|=\\
\left|\frac{M_{j_1,\dots,j_k}(\myt\zz^{(01\dots k-1)})}{M_{i_1,\dots,_k}(\myt\zz^{(01\dots k-1)})}\right|\cdot
\left|\frac{h_{n; j_1\dots,j_k}(z)-h_{n;i_1\dots i_k}(z)}{1+h_{n;i_1\dots i_k}(z)}\right|\le\\
\left|\frac{M_{j_1,\dots,j_k}(\myt\zz^{(01\dots k-1)})}{M_{i_1,\dots,_k}(\myt\zz^{(01\dots k-1)})}\right|\cdot
\frac{|h_{n; j_1\dots j_k}(z)|+|h_{n;i_1\dots i_k}(z)|}{|1+h_{n;i_1\dots i_k}(z)|}.\\
\end{aligned}
\end{equation}
We have $K_\varepsilon(n)\subset K_1$, and hence
\begin{equation}
\label{mod_M_3}
\max_{z\in K^\varepsilon(n)}\left|\frac{M_{j_1,\dots,j_k}(\myt\zz^{(01\dots k-1)})}{M_{i_1,\dots,_k}(\myt\zz^{(01\dots k-1)})}\right|\le
\max_{z\in K_1}\left|\frac{M_{j_1,\dots,j_k}(\myt\zz^{(01\dots k-1)})}{M_{i_1,\dots,_k}(\myt\zz^{(01\dots k-1)})}\right|=:
C_{j_1,\dots,j_k}^{i_1,\dots,i_k}<\infty.
\end{equation}
Since we have already shown that $\lim\limits_{n\to\infty}\max\limits_{z\in K^\varepsilon(n)}|h_{n;i_1,\dots,i_k}(z)|=0$, there exists
an~$N$  such that $|1+h_{n;i_1,\dots,i_k}(z)|>1/2$ for all $n>N$ and $z\in K^\varepsilon(n)$. So,  using \eqref{mod_M_3} and \eqref{|h_n|_3}, from \eqref{P/P-M/M} we have, for $n>N$ and $z\in K^\varepsilon(n)$,
\begin{equation}
\label{|P/P-M/M|}
\left|\frac{P_{n;j_1,\dots,j_k}(z)}{P_{n;i_1,\dots,i_k}(z)}
-\frac{M_{j_1,\dots,j_k}(\myt\zz^{(01\dots k-1)})}{M_{i_1,\dots,_k}(\myt\zz^{(01\dots k-1)})}\right|
\le \myt C_0 e^{-n(u_k(z)-u_{k-1}(z))},
\end{equation}
where $\myt C_0=2{m+1\choose k}C_{j_1,\dots,j_k}^{i_1,\dots,i_k}(C_{j_1,\dots,j_k}+C_{i_1,\dots,i_k})e^{2S\myt C_1}$ is a~constant.
Estimate \eqref{P/P^} evidently follows from \eqref{|P/P-M/M|}.
\end{proof}

\begin{Corollary}
\label{Cor_P/P}
Suppose that the surface $\TRS_{[k]}$, which is constructed from~$\pi$, is connected.
Then, for any compact set $K\subset \CC\setminus F_k$, as  $n\to\infty$ we have
\begin{equation}
\label{int_t2_1}
\frac{P_{n;j_1,\dots,j_k}(z)}{P_{n;i_1,\dots,i_k}(z)}
\xrightarrow{\mcap}
\frac{M_{j_1, \dots,j_k}(z)}{M_{i_1, \dots,i_k}(z)},
\quad z\in K.
\end{equation}
Moreover,  for an arbitrary $\varepsilon'>0$,
\begin{equation}
\mcap\left\{z\in K:\left|\frac{P_{n;j_1,\dots,j_k}(z)}{P_{n;i_1,\dots,i_k}(z)}-
\frac{M_{j_1, \dots,j_k}(z)}{M_{i_1, \dots,i_k}(z)}\right|^{1/n}\cdot
e^{u_{k}(z)-u_{k-1}(z)}\ge 1+\varepsilon'\right\}
\to0.
\end{equation}
\end{Corollary}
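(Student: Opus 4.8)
The plan is to obtain Corollary~\ref{Cor_P/P} from Statement~\ref{St_P/P} by the standard device that turns locally uniform convergence \emph{outside finitely many shrinking discs} into convergence in capacity. First I would do the bookkeeping needed to see that the limit functions in the two statements coincide. From the identity
\[
M_{j_1,\dots,j_k}(z)=M_{j_1,\dots,j_k}(\myt\zz^{(01\dots k-1)})\cdot M_{w}^{0,1,\dots,k-1}(z),
\]
established in the proof of Proposition~\ref{M_ne_0} for $z\in\myh\CC\setminus F$ and valid on $\myh\CC\setminus F_k$ after analytic continuation along the sheet $\TRS_{[k]}^{(01\dots k-1)}$ (on which $\myt\pi$ is biholomorphic), the common factor $M_{w}^{0,1,\dots,k-1}(z)$ cancels in the quotient, so that
\[
\frac{M_{j_1,\dots,j_k}(\myt\zz^{(01\dots k-1)})}{M_{i_1,\dots,i_k}(\myt\zz^{(01\dots k-1)})}=\frac{M_{j_1,\dots,j_k}(z)}{M_{i_1,\dots,i_k}(z)},\qquad z\in\myh\CC\setminus F_k.
\]
Thus \eqref{P/P} and \eqref{P/P^} are literally the two assertions of the corollary, except that the supremum is taken only over the truncated set $K^\varepsilon(n)=K\setminus\bigl(\bigcup_iO^\varepsilon_{w_i}\cup\bigcup_iO^\varepsilon_{a_i(n)}\cup\bigcup_iO^\varepsilon_{b_i}\bigr)$.

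Now fix a compact $K\subset\CC\setminus F_k$, fix the threshold $\varepsilon'>0$ of the corollary (and, separately, a $\delta>0$ for its first assertion), and fix a disc--radius parameter $\varepsilon\in(0,1]$. By Statement~\ref{St_P/P} there is an $N=N(\varepsilon)$ such that for all $n>N$ the ``bad set''
\[
E_n:=\Bigl\{z\in K:\Bigl|\tfrac{P_{n;j_1,\dots,j_k}(z)}{P_{n;i_1,\dots,i_k}(z)}-\tfrac{M_{j_1,\dots,j_k}(z)}{M_{i_1,\dots,i_k}(z)}\Bigr|\ge\delta\Bigr\}
\]
(respectively, the set where $|\,\cdot\,|^{1/n}e^{u_k(z)-u_{k-1}(z)}\ge1+\varepsilon'$) does not meet $K^\varepsilon(n)$; hence $E_n\subset K\setminus K^\varepsilon(n)$, which is contained in a union of at most $W+2S$ open discs of radius~$\varepsilon$. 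Discarding those discs whose centre lies at distance $>\varepsilon$ from $K$ (they contribute nothing to $E_n\subset K$), the remaining $N'\le W+2S$ centres all lie within distance $\varepsilon$ of the bounded set $K$; letting $q$ be the monic polynomial having these centres as its roots, one checks that $E_n\subset\{z:|q(z)|\le\varepsilon D^{N'-1}\}$ with $D:=1+\operatorname{diam}(K)$, and therefore, by the lemniscate formula $\mcap\{|q|\le t\}=t^{1/\deg q}$,
\[
\mcap(E_n)\le\varepsilon^{1/N'}D\le\varepsilon^{1/(W+2S)}D.
\]
Since $W+2S$ is a fixed number (independent of $n$ and of $\varepsilon$) and $\varepsilon$ was arbitrary, letting $\varepsilon\to0$ shows $\mcap(E_n)\to0$ as $n\to\infty$, which is exactly the convergence $\xrightarrow{\mcap}$ in \eqref{int_t2_1} together with the second (rate) assertion of the corollary.

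I do not anticipate any substantial difficulty here: essentially all the analytic content is already in Statement~\ref{St_P/P} (and, before that, in Theorem~\ref{t3}, Theorem~\ref{theorem1} and the potential--theoretic machinery of \S\,\ref{s5.1}). The one point that needs a little care is the capacity bound for a union of small discs: logarithmic capacity is \emph{not} subadditive, so one cannot merely add up the capacities of the individual discs; instead one must use that the \emph{number} of discs is bounded uniformly in $n$ and $\varepsilon$ and invoke the exact value $\mcap\{|q|\le t\}=t^{1/\deg q}$ for a monic polynomial $q$, which yields the $\varepsilon^{1/(W+2S)}$ decay needed to push the capacity of $E_n$ to $0$.
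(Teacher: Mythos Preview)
Your proposal is correct and follows essentially the same route as the paper: both deduce the corollary from Statement~\ref{St_P/P} by observing that the exceptional set $K\setminus K^\varepsilon(n)$ is contained in at most $W+2S$ discs of radius~$\varepsilon$, and then bounding the capacity of such a union by a quantity of order $\varepsilon^{1/(W+2S)}$. The paper is a little more explicit than you are about one technicality: the discs $O^\varepsilon_{z^*}$ are taken in the \emph{spherical} metric, so before invoking a Euclidean capacity estimate the paper first passes to Euclidean discs $B^\varepsilon_{z^*}$ (using that spherical and Euclidean distances are bi-Lipschitz on a large disc containing~$K$) and then cites the standard estimate (Ransford, Theorem~5.1.4) rather than the lemniscate formula. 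Your direct use of $\mcap\{|q|\le t\}=t^{1/\deg q}$ accomplishes the same thing; just make the spherical--to--Euclidean comparison explicit so that your bound $|q(z)|\le \varepsilon D^{N'-1}$ is justified.
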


\begin{proof}
To prove this corollary, we will, roughly speaking, show that in Statement~\ref{St_P/P} in the case $\infty\not\in K$
one can remove small Euclidean discs, instead of discs in the spherical metric, from the compact set~$K$ and not take into account those discs whose centres are `far' from~$K$.
So, we denote by $B^\varepsilon_{z^*}:=\{z:|z-z^*|<\varepsilon\}$ the disc of radius $\varepsilon$ with  centre at~$z^*$ in the standard Euclidean metric on~$\CC$.
We set
\begin{equation}
r= \max\limits_{\zeta\in K}\dist(0,\zeta)+\dfrac13\dist(K,\infty), \quad
R= \max\limits_{\zeta\in K}\dist(0,\zeta)+\dfrac23\dist(K,\infty).
\end{equation}
Denote  by $\{v_i(n)\}_{i=1}^{l(n)}$, $l(n)\le 2S+W$ the points among $a_i(n), b_i, w_i$ that lie in $\myo {O^r_0}$, and define $\myt K^\varepsilon(n):=K\setminus\bigcup_{i=1}^{l(n)}B_{v_i(n)}^\varepsilon$.
Since the spherical metric and the Euclidean metric on $\myo {O^R_0}$ are equivalent, there exists the constant $C$  such that
$|z_1-z_2|\le C\dist(z_1,z_2)$  for $z_1,z_2\in \myo {O^R_0}$. Further, let $\varepsilon<\frac{C}{3}\dist(K,\infty)$. Then, on the one
hand, for $z^*\in \myo {O^r_0}$ we have $O^{\varepsilon/C}_{z^*}\subset B^\varepsilon_{z^*}$, and on the other hand,
for $z^*\in \myh\CC\setminus\myo {O^r_0}$ we have $O^{\varepsilon/C}_{z^*}\cap K = \varnothing$. Consequently,
$\myt K^\varepsilon(n)\subset K^{\varepsilon/C}(n)$. Hence, in the case $\infty\not\in K$ the compact sets $K^\varepsilon(n)$
can be replaced by $\myt K^\varepsilon(n)$, and the conclusion of Statement~\ref{St_P/P} remains valid. Since
$K\setminus\myt K^\varepsilon(n)$ is contained in the union of at most $2S + W$ Euclidean discs of radius $\varepsilon$ with centres
on the compact set $\myo {O^r_0}$, we have $\mcap(K\setminus\myt K^\varepsilon(n))\le \const \varepsilon^{1/(2S+W)}$ (this
follows from the standard estimate for the capacity of union of sets, see, for example, \cite{Ra}, Theorem 5.1.4).
Therefore,  $\mcap(K\setminus\myt K^\varepsilon(n))\to 0$ as $n\to \infty$.
Now the result of the corollary clearly follows from  Statement~\ref{St_P/P} (with $K^\varepsilon(n)$ replaced by  $\myt K^\varepsilon(n)$).
\end{proof}

\begin{Remark}
\label{t2_rem}
Since in Statement~\ref{St_P/P} and in Corollary~\ref{Cor_P/P} we consider ratios of $k$th polynomials
of the Hermite--Pad\'e $m$-system, they will also hold true if one considers arbitrary $k$th polynomials of the Hermite--Pad\'e $m$-system
satisfying~\eqref{khp_i} (not only those for that the function $\log|\myt R_n|$ is spherically normalized on the sheet $\TRS_{[k]}^{(01\dots k-1)}$
(see~\eqref{|tRn|_2})).
\end{Remark}

\section{The condition of connectedness of the Riemann surface $\TRS_{[k]}$}
\label{s6}

In this section, we discuss the condition from Theorems~\ref{theorem1} and \ref{theorem2} of connectedness
of the  surface $\TRS_{[k]}$.
First of all, we explain why this condition is required for the proof (by our method) of Theorems~\ref{theorem1} and~\ref{theorem2}.
The key point for us was the derivation of expression \eqref{div_tRn}
for the divisor of $(\myt R_n)$ and the deduction of representation \eqref{|tRn|} for the function $|\myt R_n|$ from this expression:
\begin{equation*}
|\myt R_n(\myt\zz)|=C_n e^{-n\myt u(\myt\zz)}\psi_n(\myt\zz),
\end{equation*}
where $C_n>0$ is a~constant.
After this, we fixed the normalization of the functions $\myt u$ and $\log\psi_n$, it was spherical on the
sheet $\myt\RS_{[k]}^{(01\dots k-1)}$~\eqref{norm_g}. Next, since $\myt R_n$
is expressed in terms of  $P_{n;i_1,\dots,i_k}$ via \eqref{tRn}, 
multiplying all $P_{n;i_1,\dots,i_k}$ by the same constant, we were able to choose an arbitrary (convenient for us) constant $C_n$
(we put  $C_n=1$~\eqref{|tRn|_2}). In the case, when the surface $\TRS_{[k]}$  is disconnected, from
\eqref{div_tRn} it follows that representation \eqref{|tRn|} holds true only on each connected component of $\TRS_{[k]}$. Therefore,  the constant $C_n$
can be distinct for each connected component,
but we can multiply all $P_{n;i_1,\dots,i_k}$, and hence $\myt R_n$,
only by one common constant, that is,  we can only simultaneously multiply all $C_n$ by the same constant.
So, in this case we can not choose an arbitrary suitable normalization of the function $\myt R_n$.
Moreover, if $\TRS_{[k]}$ is disconnected, then the condition $\myt R_n\not\equiv 0$
implies only that there exists at least one connected component on that $\myt R_n\not\equiv 0$,
but $\myt R_n$ can be identically 0 on other components. It can be shown that in this case representation
\eqref{|tRn|} for $|\myt R_n|$ holds true with its own constant $C_n\ge 0$ on each connected component  of $\TRS_{[k]}$, that is,  $C_n$
may vanish on some components.
Besides that, as $\myt R_n$, the functions $M_{j_1,\dots,j_k}(\myt\zz)$~\eqref{MM}
may identically vanish on some connected components of $\TRS_{[k]}$. Therefore, if $\TRS_{[k]}$
is disconnected,
then
in the representation for $   P_{n;i_1,\dots,i_k}$ in terms of $\myt R_n$~\eqref{P_M_1},
the principal asymptotic term may not correspond  to the sheet $\myt\RS_{[k]}^{(01\dots k-1)}$,
even if we assume that  the problem of normalization of~$\myt R_n$ is somehow solved. In particular,
it may happen that in \eqref{P_Arg} the function $M_{j_1,\dots,j_k}(\myt\zz^{(01\dots k-1)})$
vanish identically in some neighbourhood.
Moreover, if $\myt R_n(\myt\zz^{(01\dots k-1)})$ vanishes in some neighbourhood, then representation
\eqref{P_Arg} itself is incorrect, because in this case the function $h_{n; j_1,\dots,j_k}$~\eqref{h_n} is not defined.

Now let us discuss the condition of connectedness of the  surface $\TRS_{[k]}$.
Recall that we fixed the compact Riemann surface~$\RS$ and
the $(m+1)$-sheeted branched covering $\pi\colon \RS\to\myh\CC$ of~$\myh\CC$. 
So, we can consider $\RS$ as a~standard compactification of the Riemann surface $\RS'$ of the
$(m+1)$-sheeted global analytic function (GAF) $w(\cdot):=\pi^{-1}(\cdot)$ in the domain $\myh\CC\setminus\Sigma$ (where $\Sigma$
is the set of all critical values of~$\pi$).
The points of~$\RS'$ are pairs $(z, w^z)$, where $z\in\myh\CC\setminus\Sigma$ and $w^z$
is a~germ of the GAF~$w$ at the point~$z$.
The surface $\TRS_{[k]}$ is defined as the standard compactification of the Riemann surface $\TRS_{[k]}'$
of all unordered collections of~$k$ distinct germs of the GAF~$w$ that are
considered at the same points $z\in\myh\CC\setminus\Sigma$ (for more details, see \S\,\ref{s4}).
So, the connectedness of $\TRS_{[k]}$ is equivalent to  the connectedness of~$\TRS_{[k]}'$.
In our arguments, we will prove the connectedness or disconnectedness exactly of the surface $\TRS_{[k]}'$, not specifying it in formulations. As before, the points of the  surface $\TRS_{[k]}'$ will be denote by $(z, \{w_1^z,\dots,w^z_{k}\})$,
where $z\in\myh\CC\setminus\Sigma$ and $\{w^z_1,\dots,w^z_{k}\}$
is an unordered collection of $k$~distinct germs of the function  $w(\cdot)$ at the point~$z$.

First, we note that for $k=1$ the surface $\TRS_{[1]}'$ precisely coincides with the surface $\RS'=\RS\setminus\pi^{-1}(\Sigma)$ by construction.
It is easily seen that, for $k=m$, the surface $\TRS_{[m]}'$ is isomorphic to the  surface~$\RS'$.
Indeed, let $(z, \{w_1^z,\dots,w^z_{m}\})\in\TRS_{[m]}'$.
We define $w^z_{m+1}$ as the only of the germs of the GAF~$w$ at the point~$z$ that is not contained in the collection $\{w_1^z,\dots,w^z_{m}\}$.
The required isomorphism sends  $(z, \{w_1^z,\dots,w^z_{m}\})$ to $(z, w^z_{m+1})$.
So, the surfaces $\TRS_{[1]}$ and $\TRS_{[m]}$ are always connected.

Now let us show that for the following class of projections~$\pi$ all the surfaces $\TRS_{[k]}$, $k=1,\dots, m$, are connected.

\begin{Statement}
\label{St_conn}
Assume that a projection $\pi\colon \RS\to\myh\CC$ is such that all its critical points are of the first order and
that for each point $z\in\myh\CC$ there is at most one critical point of~$\pi$ over it (i.e. in the set $\pi^{-1}(z)$). Then  all the surfaces $\TRS_{[k]}$, $k=1,\dots, m$, are connected.
\end{Statement}

\begin{proof}
Since, as mention above, the  surfaces  $\TRS_{[1]}$ and $\TRS_{[m]}$ are always connected, we will assume that $m\ge 3$ and some $k=2,\dots, m-1$ is fixed.  Let us show that the surface $\TRS_{[k]}'$ is connected.

Let the set $\Sigma$ of branch points of the GAF $w:=\pi^{-1}$ consist of the points $a_1,\dots,a_J$.
The condition of the theorem means that over each point $a_j$ there is precisely one branch point of the GAF~$w$ and its order is~2.
Since the set $\Sigma$ is finite, there exists a~point $a\in\CC$ such that all closed intervals  $[a,a_j]$, $j=1,\dots,J$,
intersect only in the point $a$ (if $\infty\in\Sigma$, then by the interval $[a,\infty]$
we mean any ray from $a$ to~$\infty$ not containing other points~$a_j$). We set $S:=\bigcup_{j=1}^J [a,a_j]$. Then,  $\myh\CC\setminus S$ is connected.
It is clear that  over $\myh\CC\setminus S$ the surface $\RS$ splits into  $m+1$ disjoint connected sheets and $\pi$ is
biholomorphic on each of them. In terms of~$w$, this means that over $\myh\CC\setminus S$ the GAF~$w$ splits into  $m+1$
distinct holomorphic  functions (the branches of~$w$), which will be denote by $w_1(\cdot),\dots, w_{m+1}(\cdot)$.
In what follows, we assume that for $z\in\myh\CC\setminus S$ the germ $w_i^z$ is the germ of exactly the function $w_i(\cdot)$
at the point $z$, $i=1,\dots, m+1$.
Fix $z^*\in\myh\CC\setminus S$. We will
show that, for any tuple of distinct $k+1$ indices $i_1,\dots,i_{k+1}$, $1\le i_s\le m+1$,
there exists a~path $\gamma\subset\myh\CC\setminus\Sigma$ beginning and ending at~$z^*$
such that continuing each of the germs $w^{z^*}_{i_s}$, $s=1,\dots, k-1$, along this path, we again
get the germ $w^{z^*}_{i_s}$, and continuing $w^{z^*}_{i_k}$ we get $w^{z^*}_{i_{k+1}}$.
So, the lifting of this path to~$\TRS_{[k]}'$ connects the point
$(z^*, \{w^{z^*}_{i_1},\dots,w^{z^*}_{i_{k-1}}, w^{z^*}_{i_{k}}\})$ to the point $(z^*, \{w^{z^*}_{i_1},\dots,w^{z^*}_{i_{k-1}}, w^{z^*}_{i_{k+1}}\})$.
It is clear that taking a composition of several paths analogous to~$\gamma$
we can construct a~path connecting
the point $(z^*, \{w^{z^*}_{i_1},\dots, w^{z^*}_{i_{k}}\})$ to an arbitrary given point $(z^*, \{w^{z^*}_{j_1},\dots, w^{z^*}_{j_{k}}\})\in\TRS_{[k]}'$.
Since $\myh\CC\setminus S$ is connected, the last means that $\TRS_{[k]}'$ is also connected.

So, let us construct the path $\gamma$.
By the assumption, the surface $\RS'$ of the GAF $w$ is connected,
and hence there exists a~path $\gamma'\subset\myh\CC\setminus\Sigma$ such that the continuation of~$w^{z^*}_{i_k}$
along this path gives  $w^{z^*}_{i_{k+1}}$.
For each $j=1,\dots J-1$, we fix an~(oriented) loop~$\alpha_j$ around the point $a_j$
with beginning and end at~$z^*$ such that  $\alpha_j$ intersects~$S$ in a~unique point that lies on the interval $(a, a_j)$.
Then the path $\gamma'$, as a~path in $\myh\CC\setminus\Sigma$,
is homotopic to  the path~$\gamma''$ that consists of compositions of some paths $\alpha_j$ and $\alpha_j^{-1}$, where
$\alpha_j^{-1}$~is the loop $\alpha_j$ gone in opposite direction.
Consequently,  the results of continuation of any germ of the GAF~$w$ along the paths $\gamma'$ and~$\gamma''$
are the same; in particular, $w^{z^*}_{i_k}$ is continued along $\gamma''$ to~$w^{z^*}_{i_{k+1}}$.
Let us see what happens with the germs of the  GAF~$w$ after their continuation along the loops~$\alpha_j$.
Since all the branch points of~$w$ are of second order, the results of continuaton of germs of~$w$ at the point $z^*$ along the  loops $\alpha_j$ and $\alpha_j^{-1}$ coincide. Therefore we further assume that the path $\gamma''$ consists only of the loops~$\alpha_j$.
Since over each point~$a_j$ there is precisely one branch point of second order, we see that when we go along  the loop~$\alpha_j$, only two germs from the whole collection $w^{z^*}_1,\dots,w^{z^*}_{m+1}$
are interchanged and all other germs remain unchanged.
Suppose that, when we go along he loop~$\alpha_j$, the germ $w^{z^*}_s$ is transformed to the germ $w^{z^*}_t$ (possibly coinciding with $w^{z^*}_s$). Then we write  $w^{z^*}_s\xrightarrow{\alpha_j} w^{z^*}_t$.
So, assume that the path $\gamma''$ consists of~$B$ loops $\alpha_j$:
\begin{equation}
\label{gamma''}
\gamma''=\alpha_{j_B}\circ\dots\circ\alpha_{j_1},
\end{equation}
where $j_s\subset\{1,\dots,J-1\}$.
Set  $w^{z^*}_{r_1}:=w^{z^*}_{i_k}$, $w^{z^*}_{r_{B+1}}:=w^{z^*}_{i_{k+1}}$.
Let  $w^{z^*}_{r_{l}}\xrightarrow{\alpha_{j_l}} w^{z^*}_{r_{l+1}}$, $l=1,\dots,B$, that is,
\begin{equation}
\label{chain}
w^{z^*}_{i_k}=:w^{z^*}_{r_1}\xrightarrow{\alpha_{j_1}}w^{z^*}_{r_2}
\xrightarrow{\alpha_{j_2}}\dots\xrightarrow{\alpha_{j_{B-1}}}w^{z^*}_{r_{B}}
\xrightarrow{\alpha_{j_{B}}}
w^{z^*}_{r_{B+1}}:=w^{z^*}_{i_{k+1}}.
\end{equation}
Let $w^{z^*}_{r_{l}}=w^{z^*}_{r_{l'}}$ for some $1\le l<l'\le B+1$. Then, deleting the
piece  $\alpha_{j_{l'}}\circ\dots\circ\alpha_{j_{l}}$ from the representation for~$\gamma'' $~\eqref{gamma''},
 we again obtain a~path that transforms $w^{z^*}_{i_k}$ to $w^{z^*}_{i_{k+1}}$.
Hence in what follows we assume that $\gamma''$~is a~path for that all $w^{z^*}_{r_{l}}$ in~\eqref{chain} are distinct.
Let us show that  in this case the path
\begin{equation}
\label{gamma}
\gamma:=\alpha_{j_1}\circ\dots\circ\alpha_{j_{B-1}}\circ\alpha_{j_B}\circ\alpha_{j_{B-1}}\circ\dots\circ\alpha_{j_1}
\end{equation}
is the required one.

First we show that the continuation of $w^{z^*}_{i_k}$ along $\gamma$ gives $w^{z^*}_{i_{k+1}}$.
Since, by construction, all $w^{z^*}_{r_{l}}$ in \eqref{chain} are distinct, 
when we go along the loop $\alpha_{j_l}$, $l=1,\dots,B$,
the germs $w^{z^*}_{r_{l}}$ and $w^{z^*}_{r_{l+1}}$ are interchanged and all other germs of~$w$ at $z^*$ remain unchanged. So, when we go along the loop $\alpha_{j_l}$, $l=1,\dots,B-1$,
the germ $w^{z^*}_{i_{k+1}}$ is unchanged.
The continuation of $w^{z^*}_{i_k}$ along~$\gamma''$~\eqref{gamma''} gives $w^{z^*}_{i_{k+1}}$,
and hence, taking into account the form of~$\gamma$~\eqref{gamma},
we conclude that the continuation of $w^{z^*}_{i_k}$ along~$\gamma$ also gives $w^{z^*}_{i_{k+1}}$.
Now let us show that, for all $s=1,\dots,k-1$, the continuation of $w^{z^*}_{i_s}$ along~$\gamma$ gives
the same germ $w^{z^*}_{i_{s}}$. We fix such an~$s$. Let the continuation
of $w^{z^*}_{i_s}$ along the path~$\alpha_{j_{B-1}}\circ\dots\circ\alpha_{j_1}$ be some germ $w^{z^*}_b$.
Let us show that when we go along the loop~$\alpha_{j_B}$, the germ $w^{z^*}_b$ is unchanged. Assume the contrary.
Then, since all $w^{z^*}_{r_{l}}$ in~\eqref{chain} are distinct, we have $w^{z^*}_b = w^{z^*}_{r_{B+1}}$.
Hence  the continuation of the germ $w^{z^*}_{i_s}$ along the path $\alpha_{j_{B-1}}\circ\dots\circ\alpha_{j_1}$ gives $w^{z^*}_{r_{B+1}}=w^{z^*}_{i_{k+1}}$.
On the other hand, when we go along the loop $\alpha_{j_l}$, $l=1,\dots,B$, the germs $w^{z^*}_{r_{l}}$ and $w^{z^*}_{r_{l+1}}$ are interchanged and all other germs remain unchanged.
Consequently, since $w^{z^*}_{i_{k+1}}\ne w^{z^*}_{i_s}$, among the germs $w^{z^*}_{r_{l}}$, $l=1,\dots,B-1$,
there is the germ $w^{z^*}_{i_{k+1}}=w^{z^*}_{r_{B+1}}$. But this contradicts the  fact that
all $w^{z^*}_{r_{l}}$  in~\eqref{chain} are distinct. Hence the continuation of the germ $ w^{z^*}_{i_s}$ along the path~$\gamma$~\eqref{gamma}
coincides with its continuation along the path
\begin{equation}
\alpha_{j_1}\circ\dots\circ\alpha_{j_{B-1}}\circ\alpha_{j_{B-1}}\circ\dots\circ\alpha_{j_1}\equiv \Id,
\end{equation}
that is, this germ is unchanged under continuation along $\gamma$.
\end{proof}

Though Statement~\ref{St_conn} shows that the class of projections~$\pi$ for that the  surfaces $\TRS_{[k]}$ are connected
is quite broad, the following Proposition~\ref{Pr_disco} gives a~natural class of projections~$\pi$ for that the
surfaces $\TRS_{[k]}$ for all $k=2,\dots,m-1$ are disconnected.

Since $\RS$ is a compact Riemann surface and since $\pi\colon \RS\to\myh\CC$ is an $ (m+1)$-sheeted branched covering of $\myh\CC$, the
covering $\pi\colon \RS\to\myh\CC$ is isomorphic to the covering of $\myh\CC$ by the algebraic surface
defined in the affine part of $\myh\CC_z\times\myh\CC_w$ by some algebraic equation $P(z, w)=0$, where $P$
is an irreducible polynomial of degree $(m+1)$ with respect to~$w$, with the natural projection $(z,w)\mapsto z$. 
Respectively,
the  surface $\RS'$ is isomorphic to the Riemann surface of the GAF~$w$ in $\myh\CC\setminus\Sigma$.

\begin{Proposition}
\label{Pr_disco}
Let $\RS$ be the Riemann surface defined by the algebraic equation $w^{m+1}=R(z)$, where $m\ge 3$
and $R(z)$ is an arbitrary rational function, and let $\pi\colon \zz=(z,w)\mapsto z$. Then the  surfaces $\TRS_{[k]}$ for all $k=2,\dots,m-1$ are disconnected.
\end{Proposition}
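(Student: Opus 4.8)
The plan is to make the monodromy of the global analytic function $w:=\pi^{-1}$ completely explicit and then to reduce connectedness of $\TRS_{[k]}$ to a transitivity statement for a cyclic group acting on $k$-element subsets, which will fail by a crude counting argument. First I would realize $\RS'$ as the Riemann surface of the GAF $w$ defined by $w^{m+1}=R(z)$ over $\myh\CC\setminus\Sigma$, where $\Sigma$ consists of the zeros and poles of $R$ whose order is not divisible by $m+1$ (together with $\infty$ when $m+1$ does not divide $\deg R$). Over a simply connected subdomain $U\subset\myh\CC\setminus\Sigma$ I would fix a holomorphic branch $g$ of $R^{1/(m+1)}$; then the $m+1$ branches of $w$ on $U$ are $w_j:=\zeta^{\,j}g$, $j=0,1,\dots,m$, with $\zeta:=e^{2\pi i/(m+1)}$.

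Next I would compute the monodromy. Continuing once around a point $b$ at which $R$ has a zero or pole of order $n$ sends $g\mapsto\zeta^{\,n}g$, hence permutes the branches by the cyclic shift $w_j\mapsto w_{j+n}$, indices taken modulo $m+1$ (the loop around $\infty$ gives a shift by $\pm\deg R$ in the same way). Consequently the monodromy group $G$ of $w$ --- the image of $\pi_1(\myh\CC\setminus\Sigma)$ in the group of permutations of $\{w_0,\dots,w_m\}$ --- lies inside the cyclic group of order $m+1$ generated by the shift $w_j\mapsto w_{j+1}$; in particular $|G|\le m+1$. (Since $\RS$ is connected, $G$ is in fact this whole cyclic group, but only the bound is needed below.)

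Now I would invoke the description of $\TRS_{[k]}'$ from \S\,\ref{s4}: its fiber over a base point is the set of unordered $k$-element subsets of $\{w_0,\dots,w_m\}$, and $\TRS_{[k]}$, which is connected if and only if $\TRS_{[k]}'$ is, is connected precisely when $\pi_1(\myh\CC\setminus\Sigma)$ --- equivalently $G$ --- acts transitively on these $k$-subsets by the induced permutation action. Since this action factors through $G$, each orbit has cardinality dividing $|G|\le m+1$ by the orbit--stabilizer theorem, so the number of orbits is at least $\binom{m+1}{k}/(m+1)$. For $2\le k\le m-1$ and $m\ge 3$ one has, by unimodality of the binomial coefficients, $\binom{m+1}{k}\ge\binom{m+1}{2}=\tfrac{m(m+1)}{2}>m+1$, so $\binom{m+1}{k}/(m+1)>1$ and the action has at least two orbits. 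Hence $\TRS_{[k]}'$, and therefore $\TRS_{[k]}$, is disconnected for every $k=2,\dots,m-1$.

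The only genuinely delicate step is the monodromy computation in the second paragraph: one must verify that every closed loop in $\myh\CC\setminus\Sigma$ induces a cyclic shift of the $m+1$ branches, which comes down to carefully tracking the effect of $(z-b)^{1/(m+1)}\mapsto\zeta\,(z-b)^{1/(m+1)}$ around each ramification point, with the usual bookkeeping at $b=\infty$ and at zeros/poles of $R$ whose order is divisible by $m+1$ (where there is no ramification at all). Everything after that --- the orbit--stabilizer bound and the binomial inequality --- is routine.
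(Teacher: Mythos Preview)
Your proof is correct. Both your argument and the paper's rest on the same key observation---that continuation along any loop multiplies every branch of $w$ by the \emph{same} root of unity, so the monodromy group of $w$ sits inside the cyclic group of order $m+1$---but the two proofs diverge in how they exploit this. The paper phrases the observation as ``the ratio $w_r^{z^*}/w_t^{z^*}$ of any two germs is invariant under continuation along closed paths,'' then exhibits by hand two specific $k$-subsets (namely $\{w_1,\dots,w_k\}$ and $\{w_1,\dots,w_{k-1},w_s\}$ for a suitable $s$ near the ``antipode'') whose multisets of pairwise ratios differ, so no loop can carry one to the other. You instead pass to the group action on $k$-subsets and finish by counting: each orbit has size at most $|G|\le m+1$, while $\binom{m+1}{k}\ge\binom{m+1}{2}=\tfrac{m(m+1)}{2}>m+1$ for $2\le k\le m-1$ and $m\ge3$, so there must be at least two orbits. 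Your route is cleaner and more conceptual, and it avoids the case split on the parity of $m$ and the reduction to $k\le(m+1)/2$ that the paper uses; the paper's route, on the other hand, is slightly more constructive in that it names two explicit points of $\TRS_{[k]}'$ lying in different components.
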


\begin{proof}
The  surfaces $\TRS_{[k]}'$ and $\TRS_{[m+1-k]}'$ are isomorphic (the required isomorphism
sends $(z, \{w_1^z,\dots,w^z_{k}\})$ to the point $(z, \{w_{k+1}^z,\dots,w^z_{m+1}\})$, where $w_{k+1}^z,\dots$, $w^z_{m+1}$
are the germs of~$w$ at the point $z$ that are not contained among the germs $w_1^z,\dots,w^z_{k}$). So we can assume that
$k\le (m+1)/2$.

The branch points of the GAF $w(z)=\sqrt[m+1]{R(z)}$ are contained among the zeros and poles
of the function  $R(z)$. Let $a$ be a~zero or a~pole of~$R(z)$ of order~$l$,
and let $d$~be the greatest common divisor of $(m+1)$ and~$l$. Then
over~$a$ the GAF~$w$ has $d$~branch points of order $\frac{m+1}{d}$.
Therefore, when we go around the point $a$, the ratio of any two germs of the function~$w$ is preserved. 
Hence if $w^{z^*}_r, w^{z^*}_t$ are two germs of the GAF $w(z)$ at an arbitrary point $z^*\in\myh\CC\setminus \Sigma$, then
the ratio $w^{z^*}_r/ w^{z^*}_t$ is preserved under the continuation along any path $\gamma$ in $\myh\CC\setminus \Sigma$ with beginning and end at~$z^*$.

We fix a~point $z^*\in\CC$ that is neither a~zero nor a~pole of the function $R(z)$. Let $w^{z^*}_1$ be some germ of~$w$ at~$z^*$.
For each $j=2,\dots,m+1$, we denote by $w^{z^*}_j$ the germ of~$w$ at~$z^*$ such that
\begin{equation*}
w^{z^*}_j(z^*)=e^{2\pi i\frac{(j-1)}{m+1}}w^{z^*}_1(z^*).
\end{equation*}
Recall that we assume that  $k\le (m+1)/2$. Let us show that there is no path $\gamma\subset\myh\CC\setminus \Sigma$ with beginning and end at~$z^*$
such that the continuation along this path transforms the unordered collection of germs $\{w^{z^*}_1,w^{z^*}_2,\dots,w^{z^*}_{k}\}$ to the collection $\{w^{z^*}_1,w^{z^*}_2,\dots,w^{z^*}_{k-1}, w^{z^*}_{s}\}$, where $s=(m+1)/2+1$ for odd $m$ and $s=m/2+1$ for even~$m$.
Indeed, we have $\arg(w^{z^*}_s(z^*)/w^{z^*}_1(z^*))=2\pi(s-1)/(m+1)$,
and the arguments of all possible ratios $w^{z^*}_r(z^*)/w^{z^*}_t(z^*)$ for  $r,t\le k\le (m+1)/2$
are not greater than $2\pi(s-2)/(m+1)$ (modulo~$2\pi$). Therefore,  among the germs $w^{z^*}_1,w^{z^*}_2,\dots,w^{z^*}_{k}$
there are no such ones whose ratio at the point $z^*$ is $w^{z^*}_s(z^*)/w^{z^*}_1(z^*)$.
Since, according to the above, the ratio of two arbitrary germs of~$w$  is preserved under the continuation along any closed path,
the last claim proves that the required~$\gamma$ does not exist.
\end{proof}

In the end of this section we will consider in more detail the simplest case where $m=3$ and, respectively, $k=2$.
Since in this case the covering $\pi\colon \RS\to\myh\CC$ is 4-sheeted,
it follows that the covering $\myt\pi\colon\TRS_{[2]}\to\myh\CC$ is 6-sheeted, and it is easily seen that
any connected component of $\myt\pi\colon\TRS_{[2]}\to\myh\CC$ is at least 2-sheeted. So, we have three possibilities:
1)~$\myt\pi\colon\TRS_{[2]}\to\myh\CC$ consists of three 2-sheeted connected components,
2)~$\myt\pi\colon\TRS_{[2]}\to\myh\CC$ consists of one 2-sheeted and one  4-sheeted connected components,
3)~$\myt\pi\colon\TRS_{[2]}\to\myh\CC$ is a~connected 6-sheeted covering.
We will show that all these three possibilities are realized giving the explicit examples.
All the corresponding surfaces $\RS$ will be topological spheres (i.e. of zero genus).

\begin{Example}
\label{ex1}
Let $\RS$ be a~compactification  of the Riemann surface of the GAF $w(z)=\sqrt{z}+\sqrt{z-1}$ and
let $\pi\colon (z,w)\mapsto z$. It is easily seen that the covering $\myt\pi\colon\TRS_{[2]}\to\myh\CC$ is isomorphic
(under a~base preserving isomorphism) to the disjoint union of the coverings of $\myh\CC$ with the help of the GAFs $w_{(1)}(z) = \sqrt z$, $w_{(2)}(z) = \sqrt {z-1}$, $w_{(3)}(z) = \sqrt {z/(z-1)}$.
\end{Example}

\begin{Example}
\label{ex2}
Let $\RS$ be a~compactification of the Riemann surface of the GAF $w(z)=\sqrt[4]{z}$ and let $\pi\colon (z,w)\mapsto z$. It is easily seen that
the covering $\myt\pi\colon\TRS_{[2]}\to\myh\CC$ is isomorphic  (under a~base preserving isomorphism)
to the disjoint union of the coverings of $\myh\CC$ with the help of the GAFs $w_{(1)}(z) = \sqrt z$, $w_{(2)}(z) = \sqrt[4] {z}$.
\end{Example}

Examples \ref{ex1} and \ref{ex2} show that, for coverings $\pi$ defined explicitly via the GAF $w(z)=\pi^{-1}(z)$ that can be
expressed in terms of radicals of~$z$,
it is hard to expect the surface $\TRS_{[2]}$ to be connected. In addition, it is easily shown that if $w$~satisfies the
polynomial equation $P(z,w)=0$ that is biquadratic in~$w$,
then the corresponding surface $\TRS_{[2]}$ is always disconnected.

Statement~\ref{St_conn} provides a~broad class of examples where the surface $\TRS_{[2]}$ is connected.
However, statement~\ref{St_conn} itself does not explicitly define such surfaces, for example, as the solution of a corresponding equation.
Let a~GAF $w$ be defined by an algebraic equation $z=R(w):=P_4(w)/Q_2(w)$, where $P_4(w)$ and $Q_2(w)=(w-a)(w-b)$
are polynomials of degrees  4~and~2, respectively, that have no common roots and $a\ne b$. We also require that the derivative $R'(w)$ has no
multiple zeros (that is, all the zeros $w_1,\dots, w_5$ of the polynomial $P_4'Q_2-P_4Q_2'$ are distinct).
Let $\RS$ be a~compactification of the Riemann surface of this GAF~$w$ and let $\pi\colon (z,w)\mapsto z$. Then  $\RS$
is a~topological sphere  (since $w$~is a~global coordinate) and the GAF $w=\pi^{-1}$
has precisely 5~branch points of second order in the affine part $R(w_1),\dots, R(w_5)$ and
one branch point of second order at~$\infty$.  Therefore,  in order for such~$\pi$ to satisfy the
condition of Statement~\ref{St_conn}, and respectively,
the surface $\TRS_{[2]}$ to be connected, it is necessary to require that $R(w_j)\ne R(w_l)$ for $l\ne j$.
Let us give an explicit example of such a~surface.

\begin{Example}
\label{ex3}
Let $\RS$ be the Riemann surface of the algebraic function~$w$ defined as the solution of the equation
\begin{equation}
\label{ex_expl}
z=\frac{w^4-(1+i)w^3+3iw^2}{w^2+\frac{1+i}{3}w+\frac{i}{3}}
\end{equation}
and let $\pi\colon (z,w)\mapsto z$. Then  the surface $\TRS_{[2]}$ is connected.
\end{Example}
\begin{proof}
We denote by $R(w)$ the  right-hand side of~\eqref{ex_expl}, by $P_4(w)$ its numerator, and by $Q_2(w)$ its denominator. Then
\begin{equation*}
P_4'(w)Q_2(w)-P_4(w)Q_2'(w)=2w(w^4-1).
\end{equation*}
Consequently,  $0, \pm 1, \pm i$ are the zeros of $R(w)$.
In view of the above we should check that the function~$R$ has distinct values at these points. Indeed,
we have
$R(0)=0$, $R(1)=(3+6i)/5$, $ R(-1)=3+6i$, $R(i)=(-3+6i)/5$, $R(-i)=-3+6i$.
\end{proof}

{\bf Aleksandr V. ~Komlov}

Steklov Mathematical Institute of RAS, 

Moscow, Russia

{\it E-mail}: komlov@mi-ras.ru

\end{document}